\numberwithin{equation}{section}
\newcounter{keepeqno}
\newcommand{\BA}{{\mathbb {A}}}
\newcommand{\BC}{{\mathbb {C}}}
\newcommand{\BL}{{\mathbb {L}}}
\newcommand{\BR}{{\mathbb {R}}}
\newcommand{\BZ}{{\mathbb {Z}}}
\newcommand{\CA}{{\mathcal {A}}}
\newcommand{\CC}{{\mathcal {C}}}
\newcommand{\CE}{{\mathcal {E}}}
\newcommand{\CF}{{\mathcal {F}}}
\newcommand{\CL}{{\mathcal {L}}}
\newcommand{\CM}{{\mathcal {M}}}
\newcommand{\CS}{{\mathcal {S}}}
\newcommand{\CW}{{\mathcal {W}}}
\newcommand{\CZ}{{\mathcal {Z}}}
\newcommand{\FN}{{\mathfrak {N}}}
\newcommand{\FR}{{\mathfrak {R}}}
\newcommand{\FX}{{\mathfrak {X}}}
\newcommand{\FZ}{{\mathfrak {Z}}}
\newcommand{\Fc}{{\mathfrak {c}}}
\newcommand{\Fn}{{\mathfrak {n}}}
\newcommand{\Fo}{{\mathfrak {o}}}
\newcommand{\Fp}{{\mathfrak {p}}}
\newcommand{\RG}{{\mathrm {G}}}
\newcommand{\RI}{{\mathrm {I}}}
\newcommand{\RM}{{\mathrm {M}}}
\newcommand{\RO}{{\mathrm {O}}}
\newcommand{\RS}{{\mathrm {S}}}
\newcommand{\RU}{{\mathrm {U}}}
\newcommand{\cusp}{{\mathrm{cusp}}}
\newcommand{\GJ}{{\mathrm{GJ}}}
\newcommand{\GL}{{\mathrm{GL}}}
\newcommand{\Id}{{\mathrm{Id}}}
\renewcommand{\Re}{{\mathrm{Re}}}
\newcommand{\reg}{{\mathrm{reg}}}
\newcommand{\Res}{{\mathrm{Res}}}
\newcommand{\SL}{{\mathrm{SL}}}
\newcommand{\Span}{{\mathrm{Span}}}
\newcommand{\tr}{{\mathrm{tr}}}
\newcommand{\ud}{\,\mathrm{d}}
\newcommand{\vol}{{\mathrm{vol}}}
\newcommand{\wt}{\widetilde}
\newcommand{\bs}{\backslash}
\def\alp{{\alpha}}
\def\ac{\mathrm{ac}}
\def\bks{{\backslash}}
\def\del{{\delta}}
\def\Del{{\Delta}}
\def\diag{{\rm diag}}
\def\veps{{\varepsilon}}
\def\sig{{\sigma}}
\def\std{\rm std}
\def\ome{{\omega}}
\def\Ome{{\Omega}}
\def\gam{{\gamma}}
\def\wb{\overline} 
\def\vpi{\varpi}
\def\vphi{\varphi}
\def\pv{\mathrm{pv}}
\newtheorem{thm}{Theorem}[section]
\newtheorem{prp}[thm]{Proposition}
\newtheorem{lem}[thm]{Lemma}
\newtheorem{cor}[thm]{Corollary}
\newtheorem{ass}[thm]{Assumption}
\newtheorem{cnj}[thm]{Conjecture}
\newcommand{\Rmnum}[1]{\expandafter\@slowromancap\romannumeral #1@}
\DeclarePairedDelimiter{\floor}{\lfloor}{\rfloor}
\begin{document}

\title[Fourier Operators on $\GL_1$ and Langlands $\gamma$-functions]
{Certain Fourier Operators on $\GL_1$ and Local Langlands Gamma functions}

\author{Dihua Jiang}
\address{School of Mathematics\\
University of Minnesota\\
Minneapolis, MN 55455, USA}
\email{dhjiang@math.umn.edu}

\author{Zhilin Luo}
\address{Department of Mathematics\\
University of Chicago\\
Chicago IL, 60637, USA}
\email{zhilinchicago@uchicago.edu}

\subjclass[2010]{Primary 11F66, 43A32, 46S10; Secondary 11F70, 22E50, 43A80}

\keywords{Invariant Distribution, Fourier Operator, Hankel Transforms, Representation of Real and $p$-adic Reductive Groups, Langlands Local Gamma Functions.}

\thanks{The research of this paper is supported in part by the NSF Grant DMS--1901802.}

\date{\today}

\begin{abstract}
For a split reductive group $G$ over a number field $k$, let $\rho$ be an $n$-dimensional complex
representation of its complex dual group $G^\vee(\BC)$. For any irreducible cuspidal automorphic
representation $\sig$ of $G(\BA)$, where $\BA$ is the ring of adeles of $k$, in \cite{JL21}, the authors
introduce the $(\sig,\rho)$-Schwartz space $\CS_{\sig,\rho}(\BA^\times)$ and $(\sig,\rho)$-Fourier
operator $\CF_{\sig,\rho}$, and study the $(\sigma,\rho,\psi)$-Poisson summation formula on $\GL_1$,
under the assumption that the local Langlands functoriality holds for the pair $(G,\rho)$ at all local places of $k$, where $\psi$ is a non-trivial additive character of $k\bks\BA$. Such
general formulae on $\GL_1$, as a vast generalization of the classical Poisson summation formula,
are expected to be responsible for the Langlands conjecture (\cite{L70}) on global functional equation for the automorphic $L$-functions $L(s,\sig,\rho)$. In order to understand such Poisson summation formulae, we continue with \cite{JL21} and develop a further local theory related to the $(\sig,\rho)$-Schwartz space
$\CS_{\sig,\rho}(\BA^\times)$ and $(\sig,\rho)$-Fourier operator $\CF_{\sig,\rho}$. More precisely,
over any local field $k_\nu$ of $k$, we define distribution kernel functions $k_{\sig_\nu,\rho,\psi_\nu
}(x)$ on $\GL_1$ that represent the $(\sig_\nu,\rho)$-Fourier operators $\CF_{\sig_\nu,\rho,\psi_\nu}$
as convolution integral operators, i.e. generalized Hankel transforms, and the local Langlands $\gam$-functions $\gam(s,\sig_\nu,\rho,\psi_\nu)$ as Mellin transform of the kernel functions.
As a consequence, we show that
any local Langlands $\gam$-functions are the gamma functions in the sense of I. Gelfand, M. Graev, and
I. Piatetski-Shapiro in \cite{GGPS} and of A. Weil in \cite{W66}.
\end{abstract}

\maketitle
\tableofcontents

\section{Introduction}\label{sec-I}

Let $k$ be a number field and $\BA$ be the ring of adeles of $k$. For a $k$-split reductive algebraic group $G$, let $\rho$ be an $n$-dimensional complex representation of its complex dual group $G^\vee(\BC)$.
In our paper (\cite{JL21}), under the assumption of the Langlands conjecture of local functoriality for
$(G,\rho)$ at all local places $\nu$ of $k$, we introduce the $(\sig,\rho)$-Schwartz space
$\CS_{\sig,\rho}(\BA^\times)$ on $\BA^\times$, and the $(\sig,\rho)$-Fourier operator
$\CF_{\sig,\rho,\psi}$, with a non-trivial additive character $\psi$ of $k\bks\BA$, that takes any $(\sig,\rho)$-Schwartz function $\phi$ in $\CS_{\sig,\rho}(\BA^\times)$ to a
$(\wt{\sig},\rho)$-Schwartz function $\CF_{\sig,\rho,\psi}(\phi)$ in $\CS_{\wt{\sig},\rho}(\BA^\times)$, where $\wt{\sig}$ is the contragredient of $\sig$. In \cite[Theorem 5.8]{JL21},
we show that for any given $\sigma\in\CA_\cusp(G)$, the $(\sig,\rho)$-theta function
\[
\Theta_{\sig,\rho}(x,\phi):=\sum_{\alp\in k^\times}\phi(\alp x)
\]
converges absolutely for any $\phi\in\CS_{\sig,\rho}(\BA^\times)$ and $x\in\BA^\times$. The following
is Conjecture 6.4 in \cite{JL21}, which asserts the $(\sig,\rho)$-Poisson summation formula on $\GL_1$.

\begin{cnj}[$(\sig,\rho)$-Poisson Summation Formula]\label{cnj:PSF}
Let $G$ be a $k$-split reductive group, and
$\rho\colon G^\vee(\BC)\to\GL_n(\BC)$ be any finite dimensional representation of the complex
dual group $G^\vee(\BC)$.
For any $\sigma\in\CA_\cusp(G)$, there exist $k^\times$-invariant linear functionals
$\CE_{\sigma,\rho}$ and $\CE_{\wt{\sigma},\rho}$ on $\CS_{\sigma,\rho}(\BA^\times)$ and
$\CS_{\wt{\sigma},\rho}(\BA^\times)$, respectively, such that the
$(\sigma,\rho)$-Poisson Summation Formula:
\begin{align}\label{RPSF}
\CE_{\sigma,\rho}(\phi)
=
\CE_{\wt{\sigma},\rho}(\CF_{\sigma,\rho,\psi}(\phi))
\end{align}
holds for $\phi\in\CS_{\sigma,\rho}(\BA^\times)$. Moreover,
if $\phi\in\CS_{\sig,\rho}^{\circ\circ}(\BA^\times)\subset\CS_{\sig,\rho}(\BA^\times)$, then the linear functional $\CE_{\sig,\rho}(\phi)$ is of the following form
\[
\CE_{\sigma,\rho}(\phi^x)=\Theta_{\sig,\rho}(x,\phi) = \sum_{\alp\in k^\times}\phi(\alp x)
\]
where $\phi^x(\cdot) = \phi(\cdot x)$ with $x\in\BA^\times$.
\end{cnj}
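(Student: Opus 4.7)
The plan is to realize the $(\sig,\rho)$-Poisson summation formula as the Mellin-side incarnation of the global functional equation for $L(s,\sig,\rho)$, in the spirit of Tate's thesis but run in reverse: rather than using PSF to derive the functional equation, we derive PSF from the functional equation, together with the local results of the paper identifying $\CF_{\sig_\nu,\rho,\psi_\nu}$ with a Hankel-type convolution whose Mellin transform is $\gam(s,\sig_\nu,\rho,\psi_\nu)$.

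\textbf{Step 1.} I would first restrict to the dense subspace $\CS_{\sig,\rho}^{\circ\circ}(\BA^\times)$, on which the conjecture already prescribes the explicit shape $\CE_{\sig,\rho}(\phi)=\Theta_{\sig,\rho}(1,\phi)$. For a factorizable $\phi=\otimes_\nu\phi_\nu$ in this subspace, I form the global zeta integral $Z(s,\phi)=\int_{\BA^\times}\phi(x)|x|^s\ud^\times x$. Unfolding via the absolute convergence statement of \cite[Theorem~5.8]{JL21} gives $Z(s,\phi)=\int_{k^\times\bks\BA^\times}\Theta_{\sig,\rho}(x,\phi)|x|^s\ud^\times x$. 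Factoring as an Euler product and using the local construction of the Schwartz space, one obtains $Z(s,\phi)=L(s,\sig,\rho)\cdot\prod_\nu H_\nu(s,\phi_\nu)$ with each $H_\nu$ entire and equal to $1$ at almost all places.

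\textbf{Step 2.} I would then combine the central local identity of the paper,
\[
Z_\nu(1-s,\CF_{\sig_\nu,\rho,\psi_\nu}(\phi_\nu))=\gam(s,\sig_\nu,\rho,\psi_\nu)\cdot Z_\nu(s,\phi_\nu),
\]
with the Godement--Jacquet global functional equation for the Langlands transfer $\rho(\sig)$ on $\GL_n(\BA)$, which under the assumed local functoriality reads $L(s,\sig,\rho)=\eps(s,\sig,\rho)L(1-s,\wt\sig,\rho)$ with $\eps(s,\sig,\rho)=\prod_\nu\gam(s,\sig_\nu,\rho,\psi_\nu)L(1-s,\wt\sig_\nu,\rho)/L(s,\sig_\nu,\rho)$. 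Multiplying the local identities over all places and cancelling the ratio of $L$-factors against $\eps(s,\sig,\rho)$ yields $Z(s,\phi)=Z(1-s,\CF_{\sig,\rho,\psi}(\phi))$ as an identity of meromorphic functions. Mellin inversion on $k^\times\bks\BA^\times$, together with the $k^\times$-invariance built into the theta sum, then produces \eqref{RPSF} on $\CS_{\sig,\rho}^{\circ\circ}(\BA^\times)$.

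\textbf{Step 3.} Finally I would extend $\CE_{\sig,\rho}$ from the dense subspace to the whole Schwartz space $\CS_{\sig,\rho}(\BA^\times)$ by defining $\CE_{\sig,\rho}(\phi)$ via a regularization of the zeta integral $Z(s,\phi)$ at $s=0$, obtained by contour shifting past the poles coming from $L(s,\sig,\rho)$ and $L(1-s,\wt\sig,\rho)$. The $k^\times$-invariance and the intertwining property $\CE_{\sig,\rho}=\CE_{\wt\sig,\rho}\circ\CF_{\sig,\rho,\psi}$ on the extension are then forced by continuity from Step~2.

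The main obstacle is Step~3. One must show that $\CS_{\sig,\rho}(\BA^\times)$ is rich enough to support a canonical $k^\times$-invariant regularization dictated by the meromorphic structure of $L(s,\sig,\rho)$, and that the local entire factors $H_\nu(s,\phi_\nu)$ assemble into a global holomorphic object with sufficient decay in vertical strips to justify the contour shifts. A secondary difficulty is the uniform absolute convergence of the theta sums, the unfolded zeta integrals, and the Euler product representation for $\phi$ outside $\CS_{\sig,\rho}^{\circ\circ}(\BA^\times)$; this requires pointwise analytic control on the kernels $k_{\sig_\nu,\rho,\psi_\nu}$ supplementing the $L^1$-type statement of \cite[Theorem~5.8]{JL21}, especially at ramified and archimedean places.
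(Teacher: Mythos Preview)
The statement you are attempting to prove is a \emph{conjecture} in the paper (Conjecture~\ref{cnj:PSF}), not a theorem; the paper offers no proof, and in fact frames it as the central open problem motivating the local theory developed here. The paper recalls that the conjecture is known when $G=\GL_n$ with $\rho$ standard (via \cite[Theorem~4.7]{JL21}), and more generally whenever \emph{global} Langlands functoriality for $(G,\rho)$ is known and the transfer is cuspidal. It then poses explicitly: ``How to establish Conjecture~\ref{cnj:PSF} without using the Langlands conjecture of global functoriality for $(G,\rho)$?''

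Your Step~2 assumes precisely what the conjecture is designed to avoid. You invoke ``the Godement--Jacquet global functional equation for the Langlands transfer $\rho(\sig)$ on $\GL_n(\BA)$,'' but the existence of a global automorphic representation of $\GL_n(\BA)$ lifting $\sig$ is the global functoriality conjecture itself. The standing hypothesis in the paper is only \emph{local} functoriality (Assumption~\ref{FarS}): this gives local $L$- and $\gamma$-factors at every place, hence a well-defined Euler product $L(s,\sig,\rho)$, but it does not furnish a global cusp form on $\GL_n$ to which Godement--Jacquet applies, and therefore does not give the global functional equation $L(s,\sig,\rho)=\eps(s,\sig,\rho)L(1-s,\wt\sig,\rho)$ that your argument needs. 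Indeed, the whole philosophy of \cite{JL21} and the present paper is the reverse implication: one hopes to establish the Poisson summation formula on $\GL_1$ directly, and deduce the global functional equation from it. So your argument is circular in the intended setting, and your own acknowledgment of difficulties in Step~3 (analytic continuation and decay of $L(s,\sig,\rho)$) is really a symptom of the same gap: absent global functoriality, those analytic properties are themselves open.
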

Here $\CS_{\sig,\rho}^{\circ\circ}(\BA^\times)$ is the subspace of $\CS_{\sig,\rho}(\BA^\times)$ spanned by
functions $\phi=\otimes_\nu\phi_\nu$ with compact support conditions at two local places. We refer to
Theorem 6.3 of \cite{JL21} for details.

In J. Tate's thesis (\cite{Tt50}), the classical Poisson summation formula is responsible to the global
functional equation of the Hecke $L$-functions.
The $(\sig,\rho)$-Poisson summation formula on $\GL_1$ in Conjecture \ref{cnj:PSF} is a vast generalization of the classical Poisson summation formula used in J. Tate's thesis
and is expected to be responsible to the Langlands
conjecture of the global functional equation for the Langlands automorphic $L$-function $L(s,\sig,\rho)$.
We prove in Theorem 4.7 of \cite{JL21} that Conjecture \ref{cnj:PSF} is true when $G=\GL_n$,
$\rho$ is the standard representation of $\GL_n(\BC)$, and $\pi$ is an irreducible cuspidal automorphic
representation of $\GL_n(\BA)$. See \cite[Theorem 6.3]{JL21} for a variant when $\pi$ is an irreducible
square-integrable automorphic representation of $\GL_n(\BA)$. It is clear that if the Langlands conjecture
of global functoriality holds for $(G,\rho)$, and the image $\pi$ of an irreducible cuspidal automorphic
representation $\sig$ of $G(\BA)$ under the global functorial transfer associated to $\rho$ is cuspidal, then
the $(\sig,\rho)$-Poisson summation formula on $\GL_1$ holds. The question remains: {\sl How to establish
Conjecture \ref{cnj:PSF} without using the Langlands conjecture of global functoriality for $(G,\rho)$?}

The goal of the paper is to understand further local aspect of the analysis closely related to such Schwratz
functions and Fourier operators over local fields of characteristic zero, as a continuation of the
local theory developed in \cite{JL21}.

Let $F$ be a local field of characteristic zero, and $G$ be an $F$-split reductive group.
Take $\rho\colon G^\vee(\BC)\to\GL_n(\BC)$ to be any
finite dimensional representation of the complex dual group $G^\vee(\BC)$.
We denote by $\Pi_F(G)$ the set of equivalence classes of irreducible admissible smooth representations of
$G(F)$, which are of Casselman-Wallach type if $F$ is an Archimedean local field (\cite{C89}, \cite{Wal92}, and also \cite{SZ11} and \cite{BeK14}).

As explained in \cite[Sections 6.1 and 6.2]{JL21}, we have to fix a local Langlands reciprocity map $\FR_{F,G}$ for $G$ over $F$.
For any Archimedean local fields, the local Langlands conjecture for $G$ is a theorem of
Langlands, which follows from the Langlands classification theory (\cite{L89}). At any non-Archimedean local places, for unramified representations,
their local Langlands parameters are uniquely determined by the Satake isomorphism (\cite{S63} and also \cite{C79}).
The only situation that needs more discussion is that $F$ is non-Archimedean and the representations of $G(F)$ are ramified.

Let $\CW_F$ be the Weil group attached to $F$. The set of local Langlands parameters is denoted by
$\Phi_F(G)$, which consists of continuous, Frobenius semisimple homomorphisms
\begin{align}\label{L-p}
\varsigma\colon \CW_F\times\SL_2(\BC)\longrightarrow G^\vee,
\end{align}
up to conjugation by $G^\vee$. The local Langlands conjecture asserts that there exists a reciprocity map
\begin{align}\label{FR}
\FR_{F,G}\colon \Pi_F(G)\longrightarrow\Phi_F(G),
\end{align}
which is expected to be surjective with finite fibers, and satisfy
a series of compatibility conditions. One of the key issues is to formulate and prove the uniqueness of such local Langlands reciprocity map.

When $G=\GL_n$, it is a theorem of Harris-Taylor (\cite{HT01}), of G. Henniart (\cite{H00}) and of
P. Scholze (\cite{Sc13}) that the local Langlands reciprocity map exists and is unique with compatibility of
local factors, plus other conditions.
However, such a uniqueness is not known in general. When $G$ is an $F$-quasisplit classical group,
such a local Langlands reciprocity map exists due to the endoscopic classification of J. Arthur
(\cite{Ar13}).
In their recent work (\cite{FS21}), L. Fargues and P. Scholze use the geometrization method to understand the local Langlands conjecture. In particular, they establish a local Langlands reciprocity
map for any $F$-split reductive groups considered in this paper. More precise, Theorem I.9.6 of
\cite{FS21} asserts that for any $F$-split reductive group $G$, there exists a local Langlands reciprocity
map $\FR_{F,G}$ from $\Pi_F(G)$ to $\Phi_F(G)$, satisfying nine compatibility conditions. In particular
when $G=\GL_n$, the reciprocity map of Fargues and Scholze coincides with the unique one for $\GL_n$.
When $G$ is an $F$-quasisplit classical group, the reciprocity map of Fargues and Scholze coincides with
the one by Arthur. Although it is still not known (as far as the authors know) if the reciprocity map of Fargues and Scholze is unique, it is the most promising one towards the local Langlands conjecture in
great generality.

From now on, we are going to take the following assumption.
\begin{ass}\label{FarS}
Over any non-Archimedean local field $F$ of characteristic zero, for any $F$-split reductive group $G$,  the reciprocity map $\FR_{F,G}$ exists for the local Langlands conjecture for $G$.
\end{ass}

With Assumption \ref{FarS},
for any $\sig\in\Pi_F(G)$, there exists a unique irreducible admissible representation
\begin{align}\label{sigma-pi}
\pi=\pi(\sigma,\rho),
\end{align}
which belongs to $\Pi_F(\GL_n)$. We may define that
\begin{align}\label{L-gamma}
L(s,\sigma,\rho):=L(s,\pi)\ \
{\rm and}\ \ \gamma(s,\sigma,\rho,\psi):=\gamma(s,\pi,\psi).
\end{align}
Following from \cite[Section 5.3]{JL21}, we define the {\bf $(\sigma,\rho)$-Schwartz space} on
$F^\times$ to be
\begin{align}\label{localSS}
\CS_{\sigma,\rho}(F^\times):=\CS_{\pi}(F^\times).
\end{align}
Here the $\pi$-Schwartz space $\CS_{\pi}(F^\times)$ is defined \cite[Definition 3.3]{JL21}, for
any $\pi\in\Pi_F(\GL_n)$ and will be recalled in \eqref{piSS}.
When $\sigma$ is unramified and $F$ is non-Archimedean, we define the $(\sigma,\rho)$-basic function $\BL_{\sigma,\rho}$ to be the $\pi$-basic function $\BL_{\pi}$, which is given in
\cite[Theorem 3.10]{JL21}. For a fixed non-trivial additive character $\psi$ of $F$, we define the
{\bf $(\sigma,\rho)$-Fourier operator}
$\CF_{\sig,\rho,\psi}$ on $F^\times$ to be
\begin{align}\label{localFO}
\CF_{\sigma,\rho,\psi}:=\CF_{\pi,\psi},
\end{align}
which is a linear transformation from the $(\sigma,\rho)$-Schwartz space
$\CS_{\sigma,\rho}(F^\times)$ to the $(\wt{\sigma},\rho)$-Schwartz space
$\CS_{\wt{\sigma},\rho}(F^\times)$. Here $\wt{\sig}$ is the contragredient of $\sig$. The
$\pi$-Fourier operator $\CF_{\pi,\psi}$ is defined in \cite[Section 3.2]{JL21} for any
$\pi\in\Pi_F(\GL_n)$ and will be recalled in \eqref{eq:1-FO}. Since the image
$\pi=\pi(\sigma,\rho)$ of $\sig$ under the local functorial transfer associated to
$\rho\colon G^\vee(\BC)\to\GL_n(\BC)$ is unique, the $(\sigma,\rho)$-Schwartz space
$\CS_{\sigma,\rho}(F^\times)$ and the $(\sigma,\rho)$-Fourier operator $\CF_{\sig,\rho,\psi}$ are
well defined.

We denote by $\FX(F^\times)$ the set of all quasi-characters of $F^\times$ and may write $\chi_s(a):=\chi(a)|a|_F^s$ for $a\in F^\times$ and $\chi\in\FX(F^\times)$.
The local theory on $\GL_1$ for the pair $(G,\rho)$ can be stated as follows:

\begin{thm}[Local Theory for $(G,\rho)$]\label{thm:LT-Grho}
Let $G$ be a $F$-split reductive group. Take
\[
\rho\colon G^\vee(\BC)\to\GL_n(\BC)
\]
to be any finite dimensional representation of the complex dual group $G^\vee(\BC)$.
Assume that the Langlands conjecture of local functoriality holds for $(G,\rho)$ over $F$.
\begin{enumerate}
\item The local zeta integral defined by
\[
\CZ(s,\phi,\chi):=\int_{F^\times}\phi(x)\chi(x)|x|_F^{s-\frac{1}{2}}\ud^\times x
\]
for $\phi\in\CS_{\sigma,\rho}(F^\times)$ and $\chi\in\FX(F^\times)$ converges absolutely for $\Re(s)$
sufficiently positive, admits a meromorphic continuation to $s\in\BC$, and satisties the functional equation
\[
\CZ(1-s,\CF_{\sig,\rho,\psi}(\phi),\chi^{-1})
=
\gamma(s,\sigma\times\chi,\rho,\psi)\cdot\CZ(s,\phi,\chi).
\]
\item $\CZ(s,\phi,\chi)$ is a holomorphic multiple of the Langlands local $L$-function
$L(s,\sig\times\chi,\rho)$ associated to $(\sig,\chi)$  and $\rho$.
Moreover, when $F$ is non-Archimedean, the fractional ideal generated by the local zeta integrals $\CZ(s,\phi,\chi)$ is of the form:
\begin{align*}
\{\CZ(s,\phi,\chi)\mid \phi\in \CS_\pi(F^\times) \}
=L(s,\sig\times\chi,\rho)\cdot \BC[q^s,q^{-s}];
\end{align*}
and when $F$ is Archimedean, $\CZ(s,\phi,\chi)$, with unitary characters $\chi$, has the following property: Over any vertical strip for any $a<b$:
\[
S_{a,b}:=\{s\in\BC\mid a\leq\Re(s)\leq b\},
\]
if $P_\chi(s)$ is a polynomial in $s$ such that the product $P_\chi(s)L(s,\pi\times\chi)$ is bounded in the vertical strip $S_{a,b}$, with small neighborhoods at the possible poles of the $L$-function
$L(s,\sig\times\chi,\rho)$ removed, then the product $P_\chi(s) \CZ(s,\phi,\chi)$ must be bounded in the same vertical strip $S_{a,b}$, with small neighborhoods at the possible poles of the $L$-function
$L(s,\sig\times\chi,\rho)$ removed.
\item When $F$ is non-Archimedean, and $\sig$ is unramified, there exists a $(\sig,\rho)$-basic function
$\BL_{\sig,\rho}(x)$, belonging to $\CS_{\sigma,\rho}(F^\times)$, such that the following identity
\[
\CZ(s,\BL_{\sig,\rho},\chi)=L(s,\sig\times\chi,\rho)
\]
holds for any unramified characters $\chi$ and all $s\in\BC$ as meromorphic functions in $s$; and
$\CF_{\sig,\rho,\psi}(\BL_{\sig,\rho})=\BL_{\wt{\sig},\rho}$.
\item Define the $(\sig,\rho)$-kernel function
\[
k_{\sig,\rho,\psi}(x):=k_{\pi,\psi}(x)
\]
where $\pi=\pi(\sig,\rho)$ is the local Langlands functorial image of $\sig$ associated to $(G,\rho)$, and
the $\pi$-kernel function $k_{\pi,\psi}(x)$ is given in Proposition \ref{k-smooth} and
Corollary \ref{cor:kernel}. Then as distributions on $F^\times$,
\[
\CF_{\sig,\rho,\psi}(\phi_0)(x)=(k_{\sig,\rho,\psi}*\phi_0^\vee)(x)
\]
for any $\phi_0\in\CC_c^\infty(F^\times)$.
\item The Langlands local $\gam$-functions $\gamma(s,\sigma\times\chi,\rho,\psi)$ with any
unitary characters $\chi$ of $F^\times$ are the gamma functions in the sense of I. Gelfand, M. Graev and
I. Piatetski-Shapiro in \cite{GGPS} and of A. Weil in \cite{W66}, i.e.
\[
\CF_{\sig,\rho,\psi}(\chi_s^{-1})=\gamma(\frac{1}{2},\sigma\times\chi_s,\rho,\psi)\cdot\chi_s,
\]
as distributions on $F^\times$. The identity holds for $s\in\BC$ after meromorphic continuation, where the Fourier
transform $\CF_{\sig,\rho,\psi}(\chi_s)$ is defined by
\[
(\CF_{\sig,\rho,\psi}(\chi_s),\phi_0):=(\chi_s,\CF_{\sig,\rho,\psi}(\phi_0))
\]
for any $\phi_0\in\CC_c^\infty(F^\times)$. Moreover, the following identity
\[
\CF_{\sig,\rho,\psi}(\chi_s)(x)=(k_{\sig,\rho,\psi}*\chi_s^\vee)(x)
\]
holds for $s\in\BC$ after meromorphic continuation, as distributions on $F^\times$, where the
convolution $k_{\sig,\rho,\psi}*\chi_s^\vee$ is given in Theorem \ref{GGPS-gam}.
\end{enumerate}
\end{thm}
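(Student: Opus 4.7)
The strategy is to reduce each of the five assertions to the corresponding statement for the irreducible admissible representation $\pi=\pi(\sig,\rho)\in\Pi_F(\GL_n)$ supplied by Assumption \ref{FarS}. By the defining identifications \eqref{sigma-pi}, \eqref{L-gamma}, \eqref{localSS}, \eqref{localFO}, together with the definitions of $\BL_{\sig,\rho}$ and $k_{\sig,\rho,\psi}$ stated in parts (3) and (4), every object on the $(\sig,\rho)$-side is literally equal to the corresponding object on the $\pi$-side. Once the local theory is established for all $\pi\in\Pi_F(\GL_n)$, the $(\sig,\rho)$-version follows by this transfer. Everything below therefore takes place on the $\GL_n$-side.

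For parts (1) and (2), I would work with the $\pi$-Schwartz space $\CS_\pi(F^\times)$ and the $\pi$-Fourier operator $\CF_{\pi,\psi}$ from \cite[Section 3.2]{JL21}. Convergence of $\CZ(s,\phi,\chi)$ for $\Re(s)$ large is part of the defining growth conditions on $\phi\in\CS_\pi(F^\times)$, while the functional equation is engineered into the construction of $\CF_{\pi,\psi}$, which is built precisely to intertwine $\CZ(s,\cdot,\chi)$ with $\CZ(1-s,\cdot,\chi^{-1})$ through $\gam(s,\pi\times\chi,\psi)$. The holomorphic-multiple claim in (2) and the fractional-ideal identification at non-Archimedean places follow from Bernstein--Zelevinsky theory for $\GL_n(F)$; the Archimedean vertical-strip bound follows from Phragm\'en--Lindel\"of together with known polynomial bounds for $L(s,\pi\times\chi)$ and for Mellin transforms of Schwartz-type functions on $F^\times$, controlled by the Casselman--Wallach structure of $\pi$. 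Part (3), the unramified specialization, is \cite[Theorem 3.10]{JL21}, which constructs $\BL_\pi$ with the two stated properties. Part (4) is a direct application of Proposition \ref{k-smooth} and Corollary \ref{cor:kernel}, which produce the distribution $k_{\pi,\psi}$ representing $\CF_{\pi,\psi}$ as a convolution operator on $\CC_c^\infty(F^\times)$.

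Given (1) and (4), part (5) reduces to a Mellin-dual computation. Pairing $\CF_{\sig,\rho,\psi}(\chi_s^{-1})$ with a test function $\phi_0\in\CC_c^\infty(F^\times)$, applying the defining duality, and invoking the functional equation of (1) yields
\[
(\CF_{\sig,\rho,\psi}(\chi_s^{-1}),\phi_0)
=\CZ(\tfrac{1}{2}-s,\CF_{\sig,\rho,\psi}(\phi_0),\chi^{-1})
=\gam(\tfrac{1}{2}+s,\sig\times\chi,\rho,\psi)\cdot\CZ(\tfrac{1}{2}+s,\phi_0,\chi),
\]
and the last expression equals $\gam(\tfrac{1}{2},\sig\times\chi_s,\rho,\psi)\cdot(\chi_s,\phi_0)$ via the standard shift $\gam(s+t,\sig\times\chi,\rho,\psi)=\gam(s,\sig\times\chi_t,\rho,\psi)$. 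This proves the claimed distributional identity for unitary $\chi$ and $\Re(s)$ in a vertical strip of absolute convergence, and meromorphic continuation to all $s\in\BC$ is inherited from the meromorphy of $\gam(s,\sig\times\chi,\rho,\psi)$. The convolution form $\CF_{\sig,\rho,\psi}(\chi_s)=k_{\sig,\rho,\psi}*\chi_s^\vee$ is the content of Theorem \ref{GGPS-gam}, where the convolution of $k_{\sig,\rho,\psi}$ with the non-compactly-supported distribution $\chi_s^\vee$ is defined by meromorphic continuation. The main obstacle, I expect, is the harmonic analysis underlying Proposition \ref{k-smooth}, Corollary \ref{cor:kernel}, and Theorem \ref{GGPS-gam}: producing $k_{\sig,\rho,\psi}$ as a genuine distribution on $F^\times$ and regularizing its convolution with $\chi_s^\vee$ beyond the range of absolute convergence. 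In the Archimedean case the vertical-strip boundedness in (2) is the additional technical ingredient, requiring estimates uniform in the twist $\chi$.
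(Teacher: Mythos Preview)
Your reduction strategy---pushing everything to $\pi=\pi(\sig,\rho)\in\Pi_F(\GL_n)$ via the identifications \eqref{sigma-pi}--\eqref{localFO}---is exactly the paper's approach, and your treatment of Parts (1), (2), (3), and (5) matches the paper's: (1)--(3) are cited from \cite[Theorems 3.4 and 3.10]{JL21}, and your duality computation for (5) is precisely the one carried out in Section~\ref{ssec-HDGF}.

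There is one misattribution in Part~(4). Proposition~\ref{k-smooth} and Corollary~\ref{cor:kernel} only show that the regularized fiber integral defining $k_{\pi,\psi}(x)$ converges and yields a smooth function on $F^\times$; they do \emph{not} show that $k_{\pi,\psi}$ represents $\CF_{\pi,\psi}$ as a convolution operator. That identity, $\CF_{\pi,\psi}(\phi_0)=k_{\pi,\psi}*\phi_0^\vee$, is Theorem~\ref{thm:HT}, and its proof requires an additional ingredient you did not mention: one first establishes (Theorems~\ref{thm:KF-na} and~\ref{thm:KF-ar}) that the Mellin transform of $k_{\pi,\psi}$ along $\chi_s^{-1}$ equals $\gam(s+\tfrac12,\pi\times\chi,\psi)$, and then one inverts Mellin on $\phi_0\in\CC_c^\infty(F^\times)$ and uses the functional equation \eqref{GL1-FE} to identify the convolution with $\CF_{\pi,\psi}(\phi_0)$. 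So the logical dependency for (4) is $k\text{-smooth}/\text{cor:kernel}\Rightarrow\text{KF-na}/\text{KF-ar}\Rightarrow\text{HT}$, not a direct application of the first two. Your closing paragraph flags the harmonic analysis in \ref{k-smooth}, \ref{cor:kernel}, and \ref{GGPS-gam} as the obstacle, but the Mellin-transform identity for $k_{\pi,\psi}$ and the Mellin-inversion argument of Theorem~\ref{thm:HT} are the missing links in your chain for (4).
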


It is clear that the above local theory depends on the local Langlands reciprocity map $\FR_{F,G}$. Before knowing the uniqueness of the local Langlands reciprocity map, we may only
obtain the corresponding global theory for partial $L$-functions. However, this is not an issue for the local theory in this paper.

With Assumption \ref{FarS},
it is enough to prove Theorem \ref{thm:LT-Grho} for $G=\GL_n$ and $\pi=\pi(\sig,\rho)$. In this sense,
Parts (1), (2) and (3) of Theorem \ref{thm:LT-Grho} is a reformulation of Theorems 3.4 and 3.10 of \cite{JL21}. In this paper, we will focus on the proof of Parts (4) and (5) of Theorem \ref{thm:LT-Grho}
for $G=\GL_n$ and $\pi\in\Pi_F(\GL_n)$.

After we review the local theory on $\GL_1$ for the standard $L$-function of $\GL_n$ in
Section \ref{sec-SSFO} as developed mainly in \cite[Section 3]{JL21}, which can be traced back to the
classical work of R. Godement and H. Jacquet (\cite{GJ72}), we start our investigation
in Section \ref{sec-KF-na} on the $\pi$-kernal function $k_{\pi,\psi}(x)$ on $F^\times$ for each
$\pi\in\Pi_F(\GL_n)$ when $F$ is non-Archimedean. We prove some technical lemmas in
Section \ref{ssec-Lemmas}, which are needed for the definition of the $\pi$-kernel function
\begin{align}\label{kpi-na}
k_{\pi,\psi}(x) :=
\int^\reg_{\det g=x}
\Phi_{\GJ}(g)\vphi_{\wt{\pi}}(g)\ud_x g
=
\lim_{\ell\to \infty}
\int_{\det g=x}
\left(\Phi_{\GJ}*\Fc_{\ell}^\vee
\right)
(g)\vphi_{\wt{\pi}}(g)\ud_xg
\end{align}
as in \eqref{kernel-na} and \eqref{kernel-pv}.
The generalized function $\Phi_{\GJ}$ is the kernel function for the local theory of Godement-Jacquet, which is renormalized in \cite[Section 2.3]{JL21} and is recalled in \eqref{GJ-kernel}. Here $\wt{\pi}$ is the contragredient of $\pi$ and $\vphi_{\wt{\pi}}(g)\in\CC(\wt{\pi})$ which is the space of all matrix coefficients of
$\wt{\pi}$ spanned by functions $\{g\mapsto \langle \wt{\pi}(g)\wt{v},v\rangle\mid v\in \pi,\wt{v}\in \wt{\pi}\}$.
We refer to \eqref{kernel-na} and \eqref{kernel-pv} for unexplained notations in \eqref{kpi-na}. Proposition \ref{k-smooth} shows that
the limit in \eqref{kpi-na} converges absolutely and the convergence is uniform when $x$ lies in a compact neighborhood. Hence the $\pi$-kernel function $k_{\pi,\psi}(x)$ is smooth on $F^\times$.
In order to understand the $\pi$-kernel function $k_{\pi,\psi}(x)$, we study the $\chi_s$-Fourier coefficient
of $k_{\pi,\psi}(x)$, which is defined by
\begin{align}\label{kchis-na}
\int^\pv_{F^\times}
k_{\pi,\psi}(x)
\chi_s(x^{-1})\ud^\times x
=
\lim_{\ell\to \infty}\sum_{m=-\ell}^\ell
\int_{\RS_m}
k_{\pi,\psi}(x)
\chi_s(x^{-1})\ud^\times x,
\end{align}
for any quasi-characters $\chi_s$ of $F^\times$, where $\RS_m:=\{x\in F^\times\mid\ |x|_F=q^{-m}\}$.
Theorem \ref{thm:KF-na} shows that the principal value integral in \eqref{kchis-na}
is convergent for $\Re(s)$ sufficiently small and admits a meromorphic continuation to $s\in \BC$ with the following identity:
\begin{align}\label{kchis-gamma}
\int^\pv_{F^\times}
k_{\pi,\psi}(x)
\chi_s(x^{-1})\ud^\times x
=
\gam(\frac{1}{2},\pi\times \chi_s,\psi).
\end{align}
In consequence, we prove in Corollary \ref{kernel-wd} that the $\pi$-kernel function $k_{\pi,\psi}(x)$
in \eqref{kpi-na} is well-defined in the sense that it is independent of the choice of the matrix coefficient
$\vphi_{\wt{\pi}}$ of $\wt{\pi}$ and the choice of the sequence $\{\Fc_\ell\}_{\ell\geq 1}$.

The Archimedean counterpart of Section \ref{sec-KF-na} is given in Section \ref{sec-KF-ar}.
The $\pi$-kernel function $k_{\pi,\psi}(x)$ is defined in \eqref{kernel-ar}, similar to that in \eqref{kpi-na}.
Corollary \ref{cor:kernel} shows that $k_{\pi,\psi}(x)$ is smooth on $F^\times$.
For any quasi-characters $\chi_s$, the $\chi_s$-Fourier coefficients of $k_{\pi,\psi}(x)$ and their properties
are established in Theorem \ref{thm:KF-ar}. It is worthwhile to point out that the regularization to define the
$\pi$-kernel functions $k_{\pi,\psi}(x)$ is technical in both non-Archimedean and Archimedean cases. The
proof in the Archimedean case (Proposition \ref{prp:k-ar}) uses the elliptic regularity of the Casimir operator
$\Delta$ on $\GL_n(F)$ (\cite[Lemma 3.7]{BeK14}).

Finally, in Section \ref{sec-FOHT}, we take $F$ to be any local field of characteristic zero and prove in
Theorem \ref{thm:HT} that for any $\pi\in\Pi_F(\GL_n)$, the Fourier operator $\CF_{\pi,\psi}$ can be represented as a Hankel transform (convolution operator) with the kernel function $k_{\pi,\psi}(x)$
\[
\CF_{\pi,\psi}(\phi_0)(x)=(k_{\pi,\psi}*\phi_0^\vee)(x)
\]
for any $\phi_0\in\CC_c^\infty(F^\times)$. If any quasi-character $\chi_s$ of $F^\times$ is regarded as
a homogeneous distribution (generalized function) on $F^\times$, we prove in Theorem \ref{GGPS-gam}
that
\[
\CF_{\pi,\psi}(\chi_s^{-1})=\gam(\frac{1}{2},\pi\times\chi_s,\psi)\cdot\chi_s
\]
and
\[
\CF_{\pi,\psi}(\chi_s)(x)=(k_{\pi,\psi}*\chi_s^\vee)(x),
\]
hold for $s\in\BC$ after meromorphic continuation. Hence for any $\pi\in\Pi_F(n)$, the local Langlands $\gam$-functions $\gam(s,\pi\times\chi,\psi)$ with any
unitary characters $\chi$ of $F^\times$ are the gamma functions in the sense of Gelfand, Graev and
Piatetski-Shapiro in \cite{GGPS} and of Weil in \cite{W66}. This completes the proof of Theorem \ref{thm:LT-Grho}.

Comparing with the local theory of the Braverman-Kazhdan proposal for Langlands automorphic $L$-functions
(\cite{BK00}), the approach we take in \cite{JL21} and this paper has the advantage that the local theory can be
completely established based on the classical work of Godement-Jacquet (\cite{GJ72}) and Assumption \ref{FarS}. The existence and uniqueness of the local Langlands reciprocity map is known
for many important cases. Of course, the key point is the global theory, i.e. the
corresponding Poisson summation formula. In the global theory of the Braverman-Kazhdan proposal (\cite{BK00}) and the formulation of B. C. Ng\^o (\cite{N20}) one has to face the singularities of the
reductive monoid associated to the pair $(G,\rho)$, while in our approach, we have to face the difficulties in
analysis in order to understand the $(\sig,\rho)$-Schwartz space $\CF_{\sig,\rho}(\BA^\times)$ and the
$(\sig,\rho)$-Fourier operator $\CF_{\sig,\rho,\psi}$ on $\GL_1(\BA)$, in order to establish
Conjecture \ref{cnj:PSF}, the $(\sig,\rho)$-Poisson summation formula on $\GL_1$.

We would like to thank Binyong Sun and Chengbo Zhu for helpful discussions related to the work in \cite{BeK14}, and thank
the referee for helpful comments, which include a mistake in the proof of Proposition \ref{prp:k-ar} in an early version of this paper.

\section{$\pi$-Schwartz Spaces and $\pi$-Fourier Operators}\label{sec-SSFO}

Let $F$ be a local field of characteristic zero. If $F$ is non-Archimedean, we denote by $\Fo_F$
the ring of the integers on $F$ and by $\Fp = \Fp_F$ the maximal ideal of $\Fo=\Fo_F$.
Let $\RG_n:=\GL_n$ be the general linear group defined over $F$. Fix the following maximal (open if $F$ is non-Archimedean) compact subgroup $K$ of $\RG_n(F)=\GL_n(F)$,
\begin{align}\label{K}
K=
\begin{cases}
\GL_n(\Fo_F), & F\text{ is non-Archimedean};\\
\RO(n), &  F=\BR;\\
\RU(n), & F=\BC.
\end{cases}
\end{align}
Fix the Haar measure $\ud g = \frac{\ud^+g}{|\det g|_F^n}$ on $\RG_n(F)$ where $\ud^+g$ is the measure on $\RM_n(F)$, the space of all $n\times n$ matrices over $F$, induced from the standard additive measure $\ud^+ x$ on $F$ that is in particular self-dual with respect to a given additive character $\psi=\psi_F$ as
fixed in \cite[\S2.1]{JL21}. As usual, $\RG_n(F)$ embeds into $\RM_n(F)$ in a standard way.

Let $\Pi_F(n)$ be the set of equivalence classes of irreducible smooth representations of $\RG_n(F)$ when $F$ is non-Archimedean;
and of irreducible Casselman-Wallach representations of $\RG_n(F)$ when $F$ is Archimedean.
Set $\CC(\pi)$ the space of smooth matrix coefficients attached to $\pi$.

Let $\CS(\RM_n(F))$ be the space of Schwartz-Bruhat functions on $\RM_n(F)$. In the Archimedean case, it is the space of usual Schwartz functions. The standard Fourier transform $\CF_\psi$ acting on $\CS(\RM_n(F))$ is defined as follows,
\begin{equation}\label{eq:FTMAT}
\CF_\psi(f)(x) = \int_{\RM_n(F)}
\psi(\tr(xy))f(y)\ud^+y.
\end{equation}
Moreover,  the standard Fourier transform $\CF_\psi$ extends to a unitary operator on the space $L^2(\RM_n(F),\ud^+x)$ and satisfies the following identity:
\begin{equation}\label{eq:FTId}
\CF_\psi\circ \CF_{\psi^{-1}}  =\Id.
\end{equation}
For $f\in\CS(\RM_n(F))$, we define
\begin{align}\label{GJ-SF}
\xi_f(g):=|\det g|_F^{\frac{n}{2}}\cdot f(g)
\end{align}
for $g\in\RG_n(F)$. Then we define the Schwartz space on $\RG_n(F)$ to be
\begin{align}\label{GJ-SS}
\CS_{\std}(\RG_n(F)):=\{\xi\in\CC^\infty(\RG_n(F))\mid \quad |\det g|^{-\frac{n}{2}}\cdot\xi(g)\in\CS(\RM_n(F))\}.
\end{align}
By \cite[Prposition 2.5]{JL21}, the Schwartz space $\CS_{\std}(\RG_n(F))$ is a subspace of $L^2(\RG_n(F),\ud g)$, which is the space of square-integrable functions on $\RG_n(F)$.

Following the reformulation of the local theory of Godement-Jacquet in \cite[Section 2.3]{JL21}, the distribution kernel is
\begin{align}\label{GJ-kernel}
\Phi_{\GJ}(g):=\psi(\tr g)\cdot|\det g|_F^{\frac{n}{2}},
\end{align}
and the Fourier operator $\CF_{\GJ}$ is
\begin{align}\label{GJ-FO}
\CF_{\GJ}(\xi)(g):=\left(\Phi_{\GJ}*\xi^\vee\right)(g)
\end{align}
for any $\xi\in\CS_{\std}(\RG_n(F))$. From \cite[Proposition 2.6]{JL21}, we obtain a relation between
the Fourier operator $\CF_{\GJ}$ and the classical Fourier transform $\CF_\psi$:
\begin{align}\label{FOFT}
\CF_{\GJ}(\xi)(g)=\left(\Phi_{\GJ}*\xi^\vee\right)(g)
=
|\det g|_F^{\frac{n}{2}}\cdot\CF_\psi(|\det g|^{-\frac{n}{2}}\xi)(g).
\end{align}
From the proof of \cite[Proposition 2.6]{JL21}, it is easy to obtain that
\begin{align}\label{convolution}
\left(\Phi_{\GJ}*\xi^\vee\right)(g)
=
|\det g|_F^{\frac{n}{2}}\left(\psi(\tr(\cdot))*(|\det(\cdot)|_F^{\frac{n}{2}}\xi)^\vee\right)(g)
\end{align}
for any $\xi\in\CS_{\std}(\RG_n(F))$.

We write, for any $\xi\in\CS_{\std}(\RG_n(F))$, $\xi(g)=|\det g|_F^{\frac{n}{2}}\cdot f(g)$ for some $f\in\CS(\RM_n(F))$. The local zeta integral of Godement-Jacquet can be renormalized as
\begin{align}\label{GJ-Zeta}
 \CZ(s,\xi,\vphi_\pi,\chi)=
\int_{\RG_n(F)}
\xi(g)\vphi_\pi(g)\chi(\det g)|\det g|_F^{s-\frac{1}{2}}\ud g.
\end{align}
It converges absolutely for $\Re(s)$ sufficiently positive and
satisfies the functional equation:
\begin{align}\label{GJFE}
\CZ(1-s, \CF_{\GJ}(\xi), \vphi_\pi^\vee,\chi^{-1})
=
\gam(s,\pi\times\chi,\psi)\cdot
\CZ(s,\xi,\vphi_\pi,\chi),
\end{align}
which holds as meromorphic functions in $s$, where $\gam(s,\pi\times\chi,\psi)$ is the local
Langlands $\gam$-function associated to $\pi\in\Pi_F(n)$ and $\chi\in\FX(F^\times)$, the set of all quasi-characters of $F^\times$.

\subsection{$\pi$-Schwartz functions}\label{ssec-piSF}
Consider the following determinant map:
\begin{align}\label{det}
\det={\det}_F\colon\RG_n(F)=\GL_n(F)\to F^\times.
\end{align}
It is clear that the kernel $\ker(\det)=\SL_n(F)$. For each $x\in F^\times$, the fiber of the determinant map
$\det$ is
\begin{align}\label{fiber}
\RG_n(F)_x:=
\{g\in \RG_n(F)\mid \det g=x\}.
\end{align}
It is clear that each fiber $\RG_n(F)_x$ is an $\SL_n(F)$-torsor. Hence one has the $\SL_n(F)$-invariant measure $\ud_x g$ that is induced from the Haar measure $\ud_1 g$
on $\SL_n(F)$. Here $\ud_1 g$ is the restriction of the Haar measure $\ud g$ from $\RG_n(F)$ to $\SL_n(F)$.

Write $\xi=|\det g|^{\frac{n}{2}}\cdot f(g)\in\CS_{\std}(\RG_n(F))$ with some $f\in\CS(\RM_n(F))$
as defined in \eqref{GJ-SS}. For $\pi\in\Pi_F(n)$,  we denote by $\CC(\pi)$ the space of all matrix coefficients of $\pi$.  For $\varphi_\pi\in\CC(\pi)$,  as in \cite[Section 3.1]{JL21},
we define
\begin{align}\label{fibration}
\phi_{\xi,\vphi_\pi}(x) := \int_{\RG_n(F)_x}
\xi(g)\vphi_\pi(g)\ud_x g
=
|x|_F^{\frac{n}{2}}
\int_{\RG_n(F)_x}
f(g)\vphi_\pi(g)\ud_x g.
\end{align}
By \cite[Proposition 3.2]{JL21}, the function $\phi_{\xi,\vphi_\pi}(x)$ is absolutely convergent for all $x\in F^\times$ and is smooth over $F^\times$. Following \cite[Definition 3.3]{JL21},
for any $\pi\in\Pi_F(n)$, the space of $\pi$-Schwartz functions is defined by
\begin{align}\label{piSS}
\CS_\pi(F^\times) = \Span
\{
\phi=\phi_{\xi,\vphi_\pi}\in\CC^\infty(F^\times)\mid \xi\in \CS_{\std}(\RG_n(F)),\vphi_\pi\in \CC(\pi)\}.
\end{align}
By \cite[Corollary 3.8]{JL21}, we have
\begin{align}\label{CSC}
\CC_c^\infty(F^\times)\subset
\CS_\pi(F^\times)
\subset
\CC^\infty(F^\times).
\end{align}
As in \cite[Section 3.1]{JL21}, for any $\phi\in \CS_\pi(F^\times)$ and a quasi-character $\chi\in\FX(F^\times)$, define a $\GL_1$ zeta integral $\CZ(s,\phi,\chi)$ associated to the pair $(\phi,\chi)$ to be
\begin{equation}\label{eq:1-zeta}
\CZ(s,\phi,\chi) =
\int_{F^\times}
\phi(x)\chi(x)|x|_F^{s-\frac{1}{2}}\ud^\times x.
\end{equation}
When $\phi=\phi_{\xi,\vphi_\pi}$ for some $\xi\in\CS_{\std}(\RG_n(F))$ and
$\varphi_\pi\in\CC(\pi)$, we have the following identity of local zeta integrals:
\begin{align}\label{eq:zetas}
\CZ(s,\phi,\chi)=\CZ(s,\xi,\vphi_\pi,\chi),
\end{align}
which holds for $\Re(s)$ sufficiently large and then for all $s\in\BC$ by meromorphic continuation.

\subsection{$\pi$-Fourier operators}\label{ssec:FO-GL1}
As in \cite[Section 3.2]{JL21}, for $\phi\in\CS_\pi(F^\times)$, we define the Fourier operator $\CF_{\pi,\psi}(\phi)$ through the following diagram:
\begin{align}\label{diag:F}
\xymatrix{
\CS_{\std}(\RG_n(F))\otimes \CC(\pi)\ar[d]\ar[rrr]^{(\CF_{\GJ},(\cdot)^{\vee})}&&& \CS_{\std}(\RG_n(F))\otimes \CC(\wt{\pi})\ar[d]\\
\CS_\pi(F^\times) \ar[rrr]^{\CF_{\pi,\psi}} &&& \CS_{\wt{\pi}}(F^\times)
}
\end{align}
More precisely, for $\phi=\phi_{\xi,\vphi_\pi}\in\CS_\pi(F^\times)$ with a $\xi\in\CS_{\std}(\RG_n(F))$ and
a $\vphi_\pi\in\CC(\pi)$, we define
\begin{align}\label{eq:1-FO}
\CF_{\pi,\psi}(\phi)=\CF_{\pi,\psi}(\phi_{\xi,\vphi_\pi}):=\phi_{\CF_{\GJ}(\xi),\vphi_\pi^\vee},
\end{align}
where $\vphi_\pi^\vee(g)=\vphi_\pi(g^{-1})\in\CC(\wt{\pi})$. By \cite[Proposition 3.9]{JL21},
the $\pi$-Fourier operator is well defined and yields the following functional equation:
\begin{align}\label{GL1-FE}
\CZ(1-s,\CF_{\pi,\psi}(\phi),\chi^{-1}) =
\gam(s,\pi\times \chi,\psi)
\cdot \CZ(s,\phi,\chi),
\end{align}
holds for any $\phi\in \CS_\pi(F^\times)$, after meromorphic continuation, according to \cite[Theorem 3.10]{JL21}.

\section{$\pi$-Kernel Function: non-Archimedean case}
\label{sec-KF-na}

Assume that the local field $F$ is non-Archimedean. We are going to introduce a $\pi$-kernel
function
$k_{\pi,\psi}(x)$ on $F^\times$ for each $\pi\in\Pi_F(n)$, such that the local Langlands
$\gam$-function $\gam(s,\pi,\psi)$ can be represented by the Mellin transform of
$k_{\pi,\psi}(x)$. Meanwhile, we show that $\gam(s,\pi,\psi)$ is a gamma function in the
sense of Gelfand, Graev, and Piatetski-Shapiro in \cite{GGPS} and of Weil in \cite{W66}.

\subsection{Some technical lemmas}\label{ssec-Lemmas}
For any integer $\ell\geq 1$ and $\Fp=\Fp_F$,
we define $K_\ell = \RI_n+\RM_n(\Fp^\ell)$, the principal congruence (module $\Fp^\ell$) subgroup of the maximal open compact subgroup
$K=\RG_n(\Fo_F)$ of $\RG_n(F)$. The set $\{K_\ell\}_{\ell=1}^\infty$ forms a family of open compact neighborhoods of $\RI_n$ in $\RG_n(F)$.
We establish some technical lemmas, which are needed for the proof of the main result of this section.
Recall that the additive character $\psi=\psi_F$of $F$ is taken as in \cite[Section 2.1]{JL21}, which is precisely defined as follows. Let $\vpi$ be a fixed uniformizer in $\Fp=\Fp_F$.
Then $\psi$ is an additive character of $F$ that is trivial on $\Fo_F$, but non-trivial on $\vpi^{-1}\cdot\Fo_F$.

\begin{lem}\label{lem:zero-na}
Assume that a positive integer $n\geq 2$ and the additive character $\psi=\psi_F$ is given as above.
Let $\Fn$ be any open compact subset of $F^\times$, and $\ell_{\Fn}$ be a sufficiently large integer. For any $g\in \RG_n(F)$ with $\det g\in \Fn$ and any $\ell>m>\ell_\Fn$, assume
$g\in \RM_n(\ell,m):=\RM_n(\Fp^{-\ell})\bs\RM_n(\Fp^{-m})$, then
the following identity holds
$$
\int_{h\in K^{1}_{\ell_\Fn}}
\psi(\tr(gh))\ud h = 0
$$
where $K^{1}_{\ell_\Fn}:=K_{\ell_\Fn}\cap\SL_n(F)$ is an open compact subgroup of $\SL_n(F)$.
\end{lem}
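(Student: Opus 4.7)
The plan is to exploit the bi-invariance of the Haar measure on $K^1_{\ell_\Fn}$ under certain one-parameter unipotent and diagonal subgroups to reduce the integral $I(g):=\int_{K^1_{\ell_\Fn}}\psi(\tr(gh))\,\ud h$ to one over a shrinking subset of $K^1_{\ell_\Fn}$, and then to derive a contradiction with the hypothesis $\det g\in\Fn$.

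First, for each pair $(i,j)$ with $i\neq j$, the elementary unipotent $u_t:=\RI_n+tE_{ij}$ lies in $K^1_{\ell_\Fn}$ for every $t\in\Fp^{\ell_\Fn}$ (as $\det u_t=1$), and a direct computation yields $\tr(ghu_t)=\tr(gh)+t(gh)_{ji}$. Right-invariance of the Haar measure gives $I(g)=\int\psi(\tr(ghu_t))\,\ud h$ for every such $t$, and averaging the difference over $t\in\Fp^{\ell_\Fn}$ collapses the integral to
\[
I(g)=\int_{\{h\in K^1_{\ell_\Fn}:\,(gh)_{ji}\in\Fp^{-\ell_\Fn}\}}\psi(\tr(gh))\,\ud h.
\]
Iterating this reduction across all off-diagonal pairs in an order that preserves the previously obtained constraints (for instance, first applying all upper-triangular unipotents to force $(gh)_{ji}\in\Fp^{-\ell_\Fn}$ for $i<j$, then the lower-triangular ones), and using the analogous argument for the diagonal one-parameter subgroups $\diag(1+t,1,\ldots,1,(1+t)^{-1})\in K^1_{\ell_\Fn}$ to pin down the pairwise differences of the diagonal entries of $gh$ modulo $\Fp^{-\ell_\Fn}$, one arrives at an identity in which $I(g)$ equals the integral over the set of $h\in K^1_{\ell_\Fn}$ satisfying $gh=\alpha\RI_n+R$ for some $\alpha\in F$ and $R\in\RM_n(F)$ with all entries in $\Fp^{-\ell_\Fn}$.

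Next, I would show this restricted set is empty for $\ell_\Fn$ sufficiently large in terms of $\Fn$. Since $h\in K^1_{\ell_\Fn}\subset\SL_n(\Fo_F)$ and $h^{-1}\in\RM_n(\Fo_F)$, the sup-norm estimate $|gh|_\infty=|g|_\infty$ holds (where $|M|_\infty$ denotes the maximum absolute value of the entries of $M$), and the hypothesis $g\in\RM_n(\ell,m)$ gives $|g|_\infty>q^m$; hence $|\alpha|>q^m$. Writing $\det(gh)=\alpha^n\det(\RI_n+\alpha^{-1}R)$ with $|\alpha^{-1}R|_\infty\leq q^{-m-\ell_\Fn}\ll 1$, one obtains $|\det(gh)|=|\alpha|^n>q^{nm}$. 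Choosing $\ell_\Fn$ greater than $\frac{1}{n}\log_q\sup_{x\in\Fn}|x|_F$ then forces $q^{nm}>|\det g|$, contradicting $\det(gh)=\det g\in\Fn$; thus the restricted set is empty and $I(g)=0$.

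The main technical obstacle is controlling the higher-order error terms from the diagonal reductions: for $u_t=\diag(1+t,1,\ldots,1,(1+t)^{-1})$ one has $\tr(ghu_t)-\tr(gh)=t\bigl((gh)_{11}-(gh)_{nn}\bigr)+t^2(gh)_{nn}/(1+t)$, and the quadratic piece has magnitude up to $q^{-2\ell_\Fn}|(gh)_{nn}|$, which must be absorbed into $\Fo_F$ to trivialize the $\psi$-factor. Since $|(gh)_{nn}|$ can grow with $|g|_\infty$ (a priori up to $q^\ell$), this forces either a multi-scale iteration of the reduction or an auxiliary combinatorial step using the boundedness of $\det g$ to ensure that at each stage some diagonal entry of $gh$ is controllably small. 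Analogously, preserving the previously obtained off-diagonal constraints under successive unipotent substitutions requires verifying that the cross-disturbances $|t(gh)_{k,a}|$ remain in $\Fp^{-\ell_\Fn}$, which is precisely where the hypothesis that $\ell_\Fn$ is sufficiently large (depending on $\Fn$) plays its essential role.
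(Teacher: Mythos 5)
Your overall strategy is genuinely different from the paper's: you attempt to iteratively average over one-parameter subgroups of $K^1_{\ell_\Fn}$ (unipotent and then diagonal) to collapse the integral onto the set where $gh$ is congruent to a scalar matrix modulo $\Fp^{-\ell_\Fn}$, and then show this set is empty by a determinant argument. The final emptiness step is sound: from $gh = \alpha\RI_n + R$ with $|R|_\infty\leq q^{\ell_\Fn}$, the sup-norm equality $|gh|_\infty = |g|_\infty > q^m$ forces $|\alpha| > q^m$, whence $|\det g| = |\alpha|^n > q^{nm}$ contradicts $\det g\in\Fn$ once $\ell_\Fn$ is large enough relative to $\Fn$.

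However, the reduction step has a genuine gap that the proposal does not close. After imposing the constraint $(gh)_{ji}\in\Fp^{-\ell_\Fn}$ by averaging over $u_t = \RI_n + tE_{ij}$, the next averaging over $u_{t'} = \RI_n + t'E_{i'j'}$ perturbs the entries $(gh)_{a,j'}$ by $t'(gh)_{a,i'}$; since the entries of $gh$ are only bounded by $q^\ell$ and $\ell > m > \ell_\Fn$ can be arbitrary, these perturbations can be as large as $q^{\ell-\ell_\Fn}$, so the previously restricted domain is not invariant under the next averaging. Crucially $\ell_\Fn$ must depend only on $\Fn$ while $\ell$ is unbounded, so no choice of $\ell_\Fn$ can absorb the error. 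The same problem recurs in the quadratic term $t^2(gh)_{nn}/(1+t)$ of the diagonal averaging. You correctly flag this as the "main technical obstacle" and assert it "forces either a multi-scale iteration ... or an auxiliary combinatorial step," but neither is carried out; even for $n=2$ the constrained domain $\{h : (gh)_{21}\in\Fp^{-\ell_\Fn}\}$ is not invariant under the opposite unipotent. As written the vanishing is not established.

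The paper's proof sidesteps the iteration entirely. It first applies the Cartan decomposition $g = k_1 t k_2$ and, using normality of $K^1_{\ell_0}$ in $K$, reduces the integrand to $\psi(\tr(thk))$ with $t$ diagonal. The constraint $\det g\in\Fn$ combined with $g\notin\RM_n(\Fp^{-m})$ makes $t_1$ large and $t_n$ $p$-adically small. Then a \emph{single} averaging over a small $2\times 2$-type compact subgroup $\Theta_\sigma\subset K^1_{\ell_0}$ on the left suffices, with $\Theta_\sigma$ chosen according to which entry of $k = k_2 k_1$ sits in a unit position: the large $t_1$ hits that unit entry of $hk$ and produces the needed oscillation, while the smallness of $t_n$ absorbs all the "compensating" terms of $\theta\in\Theta_\sigma$ that come from $\det\theta = 1$. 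No iteration over all entries is needed, and the unbounded-perturbation issue never arises.
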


\begin{proof}
Up to a finite union, there will be no harm to assume that $\Fn=\vpi^\alp\cdot \Fo_F^\times$ for some integer $\alp\in \BZ$, where $\vpi$ is the fixed uniformizer in $\Fo_F$.
This implies that $g$ belongs to both $\RG_n(F)_{\Fn} = \{g\in \RG_n(F)\mid \det g\in \vpi^\alp\cdot \Fo^\times_F\}$ and $\RM_n(\ell,m)$.

Consider the Cartan decomposition of $\RG_n(F)$ with respect to the maximal open compact subgroup $K=\RG_n(\Fo_F)$, and write
$$
g =k_1tk_2
$$
with $k_1,k_2\in K$, and $t= \diag(t_1,...,t_n)$ with $|t_1|_F\geq |t_2|_F\geq ...\geq |t_n|_F$. Since the set $\RM_n(\ell,m)=\RM_n(\Fp^{-\ell})\bs\RM_n(\Fp^{-m})$ is stable under the left and right
translations by $K$, we obtain that $t\in \RM_n(\ell,m)$ and $\det t\in \vpi^\alp\cdot \Fo^\times_F$. In particular $t_1\in \Fp^{-\ell}\bs \Fp^{-m}$ and $t_n\in \Fp^{\floor{\frac{\alp+m}{n-1}}}$. Here $\floor{x}$ is the largest integer smaller than or equal to $x$.

In the following we may fix any $\ell_\Fn>n|\alp|$ with $\ell>m>\ell_\Fn$.

Since $K^{1}_{\ell_0}$ is a normal subgroup of $K=\GL_n(\Fo_F)$, after changing variable $h\to k_2^{-1}hk_2$, we can write
\begin{align}\label{eq:CD}
\int_{h\in K^{1}_{\ell_0}}
\psi(\tr(gh))\ud h
=
\int_{h\in K^{1}_{\ell_0}}
\psi(\tr(kth))\ud h =
\int_{h\in K^{1}_{\ell_0}}
\psi(\tr(thk))\ud h
\end{align}
with $k = k_2k_1$.
Since $k\in K$ and $\det k\in\Fo_F^\times$, there exists a permutation $\sig$ of $\{1,...,n\}$, such that the $(\sig(i),i)$-th entry of $k$,
$k_{\sig(i),i}$ belongs to $\Fo^\times_F$, for $i=1,2,\cdots,n$. For such a $\sig$, we are going to construct a compact subgroup $\Theta_\sig$ of $K^1_{\ell_0}$ such that
for the $t$ and $k$ that are determined by $g\in\RG_n(F)_\Fn\cap\RM_n(\ell,m)$ as above and
for any $h\in K^1_{\ell_0}$,
\begin{align}\label{eq:Theta0}
\int_{\Theta_\sig}
\psi(\tr(t\theta hk))\ud \theta = 0
\end{align}
whenever $\ell>m\geq \ell_\Fn$ is sufficiently large. In fact, the vanishing of the integral in \eqref{eq:Theta0} with the reduction in \eqref{eq:CD}
is sufficient for the lemma. More precisely, we consider the following change of variable for $h$:
for any $\theta\in \Theta_\sig$,
$$
\int_{h\in K^1_{\ell_0}}
\psi(\tr(thk))\ud h
=\int_{h\in K^1_{\ell_0}}
\psi(\tr(t\theta hk))\ud h.
$$
By integrating on the variable $\theta$ over the compact subgroup $\Theta_\sig$ on both sides, we obtain that
\[
\vol(\Theta_\sig)\cdot\int_{h\in K^1_{\ell_0}}
\psi(\tr(thk))\ud h
=\int_{\Theta_\sig}\int_{h\in K^1_{\ell_0}}
\psi(\tr(t\theta hk))\ud h\ud\theta.
\]
Since the double integration on the right-hand side is over compact subgroups, we are able to change the order of integrations and obtain that
\[
\vol(\Theta_\sig)\cdot\int_{h\in K^1_{\ell_0}}
\psi(\tr(thk))\ud h=
\int_{h\in K^1_{\ell_0}}
\bigg(\int_{\Theta_\sig}\psi(\tr(thk))\ud\theta\bigg)\ud h.
\]
Hence the vanishing of the integral in \eqref{eq:Theta0} with the reduction in \eqref{eq:CD} implies the lemma.

Now we are going to prove the vanishing of the integral in \eqref{eq:Theta0}, for the $t$ and $k$ that are determined by $g\in\RG_n(F)_\Fn\cap\RM_n(\ell,m)$ as above and
for any $h\in K^1_{\ell_0}$.

First, we consider the case that $\sig(1) = 1$ and $\sig(n) = n$ (with $n\geq 2$). In this case,  we look at the compact subgroup $\Theta_\sig$ in $K^1_{\ell_0}$ consisting of diagonal elements of the form:
\[
\theta=\diag(a,1,\cdots,1,a^{-1})\in K^1_{\ell_0}
\]
with $a\in1+\Fp^{\ell_0}$. Since $h\in K^1_{\ell_0}$ is congruence to $\RI_n$ modulo $\Fp^{\ell_0}$, it is straightforward to see that for the $k$ that is determined by $g$ as above,
we still have
$$
(hk)_{\sig(i),i}\in \Fo^\times_F
$$
for every $h\in K^1_{\ell_0}$. Then the trace $\tr(t\theta hk)$ can be calculated as follows:
\[
\tr(t\theta hk) = t_1 a(hk)_{1,1}
+\sum_{j=2}^{n-1} t_j(hk)_{j,j}
+t_n a^{-1}(hk)_{n,n},
\]
with $a\in1+\Fp^{\ell_0}$. Since $\sig(n)=n$, we have that $(hk)_{n,n}\in\Fo_F^\times$.  Since $t_n\in\Fp^{\floor{\frac{\alp+m}{n-1}}}$ with $\ell>m>n|\alp|$ large, we have $\psi(t_n a^{-1}(hk)_{n,n})=1$.
It follows that
\[
\psi(\tr(t\theta hk))=\psi(t_1 a(hk)_{1,1})\psi(\sum_{j=2}^{n-1} t_j(hk)_{j,j}).
\]
Hence we obtain that the integral in \eqref{eq:Theta0} can be written as
\[
\int_{\Theta_\sig}
\psi(\tr(t\theta hk))\ud \theta=\psi(\sum_{j=2}^{n-1} t_j(hk)_{j,j})\int_{1+\Fp^{\ell_0}}\psi(t_1 a(hk)_{1,1})\ud a.
\]
Since $(hk)_{1,1}\in\Fo_F^\times$ and $t_1\in\Fp^{-\ell}\bs \Fp^{-m}$ with $\ell>m$ sufficiently large, if we write $a=1+x$, then we have
\[
\int_{1+\Fp^{\ell_0}}\psi(t_1 a(hk)_{1,1})\ud a=\psi(t_1(hk)_{1,1})\int_{\Fp^{\ell_0}}\psi(t_1 x(hk)_{1,1})\ud x.
\]
The integral on the right-hand side is zero as long as $\ell>m>\ell_0$, due to the choice of $\psi=\psi_F$. Hence the vanishing of the integral in \eqref{eq:Theta0} is proved in this case.

Next, we consider that case where $\sig(1)=n\neq 1$ and $n\geq 2$. In this case, we take the following compact subgroup $\Theta_{\sig}$ of $K^1_{\ell_0}$ that consists of elements of the form:
$$
\theta=\begin{pmatrix}1&0&b\\0&\RI_{n-2}&0\\c&0&d\end{pmatrix}
\in K^1_{\ell_0}.
$$
It is clear that $b,c\in\Fp^{\ell_0}$ and $d\in 1+\Fp^{\ell_0}$ with $d-bc=1$.
The trace $\tr(t\theta hk)$ can be calculated as follows:
\[
\tr(t\theta hk) = t_1(hk)_{1,1}+t_1b(hk)_{n,1}+
\sum_{j=2}^{n-1}
t_j(hk)_{j,j}
+t_n c (hk)_{1,n}
+t_nd (hk)_{n,n}.
\]
Since $t_n\in\Fp^{\floor{\frac{\alp+m}{2}}}$, $c\in\Fp^{\ell_0}$, $d\in 1+\Fp^{\ell_0}$, and $(hk)_{1,n}, (hk)_{n,n}\in\Fo_F$, we must have that
\[
\psi(t_n c (hk)_{1,n})=\psi(t_nd (hk)_{n,n})=1,
\]
with $\ell>\ell_0$ sufficiently large. The integral in \eqref{eq:Theta0} can be written as
\[
\int_{\Theta_\sig}
\psi(\tr(t\theta hk))\ud \theta
=
\psi(\sum_{j=1}^{n-1}t_j(hk)_{j,j})\int_{\Theta_\sig}\psi(t_1b(hk)_{n,1})\ud\theta.
\]
Since $d-bc=1$, we can write
\[
\begin{pmatrix}1&b\\c&d\end{pmatrix}=\begin{pmatrix}1&0\\c&1\end{pmatrix}\begin{pmatrix}1&b\\0&1\end{pmatrix}.
\]
Hence we obtain that
\[
\int_{\Theta_\sig}\psi(t_1b(hk)_{n,1})\ud\theta
=
\beta\cdot\int_{\Fp^{\ell_0}}\psi(t_1b(hk)_{n,1})\ud b,
\]
which is zero as $(hk)_{n,1}=(hk)_{\sig(1),1}\in\Fo_F^\times$ and $t_1\in\Fp^\ell\bs\Fp^m $ with $\ell>m>\ell_0$, due to the choice of $\psi=\psi_F$. The vanishing of the integral in \eqref{eq:Theta0} is proved in this case. Here the
constant $\beta$ is determined by the volume of the subgroup that consists of elements of type $\begin{pmatrix}1&0\\c&1\end{pmatrix}$ with $c\in\Fp^{\ell_0}$.

It is clear that the same proof can be applied to the case where $\sig(n)=1\neq n$ and $n\geq 2$.

Finally, we consider the case where $\sig(1)=j_0\notin \{1,n\}$ and $n\geq 2$. In this case, we take a compact subgroup $\Theta_\sig$ of $K^1_{\ell_0}$ that consists of elements of the form:
\[
\theta=\begin{pmatrix}1&0&b&0&0\\0&\RI_{j_0-2}&0&0&0\\c&0&1&0&0\\0&0&0&\RI_{n-j_0-1}&0\\0&0&0&0&d
\end{pmatrix}
\]
with $b,c\in\Fp^{\ell_0}$, $d\in1+\Fp^{\ell_0}$ and $(1-bc)d=1$. Hence we obtain that $d=(1-bc)^{-1}\in1+\Fp^{2\ell_0}$.
The trace can be written as
\[
\tr(t\theta hk) =
\sum_{j=1}^{n-1}
t_j(hk)_{j,j}
+t_n d (hk)_{n,n}
+t_1 b
(hk)_{j_0,1}
+t_{j_0}c(hk)_{1,j_0}.
\]
It is clear that $\psi(t_n d (hk)_{n,n})=1$. We obtain that
\[
\psi(\tr(t\theta hk))=\psi(\sum_{j=1}^{n-1}t_j(hk)_{j,j})\cdot\psi(t_1 b(hk)_{j_0,1}+t_{j_0}c(hk)_{1,j_0}).
\]
Hence the integral in \eqref{eq:Theta0} can be written as
\[
\int_{\Theta_\sig}
\psi(\tr(t\theta hk))\ud \theta
=
\psi(\sum_{j=1}^{n-1}t_j(hk)_{j,j})\int_{\Theta_\sig}\psi(t_1 b(hk)_{j_0,1}+t_{j_0}c(hk)_{1,j_0})\ud\theta.
\]
Here $t_1, t_{j_0}\in\Fp^{-\ell}\bs\Fp^{-m}$ with $\ell>m$ sufficiently large, $\alpha=(hk)_{j_0,1}\in\Fo_F^\times$ and $\beta=(hk)_{1,j_0}\in\Fo_F$.
As a topological space, we have
\[
\Theta_\sig\cong\{\begin{pmatrix}1&0\\c&1\end{pmatrix}\}\times\{\begin{pmatrix}1&b\\0&1\end{pmatrix}\}
\]
with $c,b\in\Fp^{\ell_0}$. Hence we obtain that
\[
\int_{\Theta_\sig}\psi(t_1 b(hk)_{j_0,1}+t_{j_0}c(hk)_{1,j_0})\ud\theta
=
\int_{\Theta_\sig'(c)}\bigg(\int_{\Fp^{\ell_0}}\psi(t_1 b(hk)_{j_0,1})\ud b\bigg)\psi(t_{j_0}c(hk)_{1,j_0})\ud\theta'(c).
\]
Since $(hk)_{j_0,1}=(hk)_{\sig(1),1}\in\Fo_F^\times$, and $t_1\in\Fp^{-\ell}\bs\Fp^{-m}$ with $\ell>m>\ell_0$ sufficiently large, this implies that the inner integration
\[
\int_{\Fp^{\ell_0}}\psi(t_1 b(hk)_{j_0,1})\ud b
\]
is zero as long as $\ell>m>\ell_0$ are sufficiently large, due to the choice of $\psi=\psi_F$. The vanishing of the integral in \eqref{eq:Theta0} is proved in this case, and hence for all the cases.
\end{proof}

Recall the kernel distribution in the local theory of Godement-Jacquet is given in \eqref{GJ-kernel}, which is
\[
\Phi_{\GJ}(g)=\psi(\tr g)\cdot|\det g|_F^{\frac{n}{2}}.
\]
As in Lemma \ref{lem:zero-na}, we take the set $\{K_\ell\}_{\ell=1}^\infty$ of the principal congruence subgroups of $K=\RG_n(\Fo_F)$, which forms a family of open compact neighborhoods of the identity $\RI_n$ of $\RG_n(F)$.
We denote by
\begin{align}\label{eq:ck}
\Fc_\ell(g) = \frac{1}{\vol(K_\ell)}
\mathbf{1}_{K_\ell}
\end{align}
the normalized characteristic function of $K_\ell$. It is known that $\{\Fc_\ell\}_{\ell=1}^\infty$ tends to the delta mass supported at $\RI_n$ as $\ell\to\infty$. For $\xi=\Fc_\ell$, the identity in \eqref{convolution} takes the following form.

\begin{lem}\label{lem:pv-1}
For any $g\in\RG_n(F)$, the identity
\[
\left(\Phi_{\GJ}*\Fc_{\ell}^\vee
\right)(g)
=|\det g|_F^{\frac{n}{2}}
\left(\psi(\tr(\cdot))*\Fc_\ell^\vee\right)(g)
\]
holds for all $\ell\geq 1$, where $\Phi_{\GJ}$ is the kernel function as defined in \eqref{GJ-kernel}.
\end{lem}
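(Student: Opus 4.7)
The plan is to derive the identity directly from the general identity \eqref{convolution} applied with $\xi = \Fc_\ell$, using the crucial fact that the support $K_\ell$ of $\Fc_\ell$ lies inside the maximal compact subgroup $K = \GL_n(\Fo_F)$, where the modulus of the determinant is identically $1$.

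First I would verify that $\Fc_\ell$ belongs to $\CS_{\std}(\RG_n(F))$ as defined in \eqref{GJ-SS}. Since $K_\ell \subset K$, for every $g \in K_\ell$ we have $\det g \in \Fo_F^\times$ and hence $|\det g|_F = 1$. Therefore the function $g \mapsto |\det g|_F^{-n/2} \Fc_\ell(g)$ agrees with $\Fc_\ell$ on $K_\ell$ and is extended by zero elsewhere; this is a compactly supported locally constant function on $\RM_n(F)$, hence a Schwartz–Bruhat function. Consequently $\Fc_\ell \in \CS_{\std}(\RG_n(F))$, so identity \eqref{convolution} applies with $\xi = \Fc_\ell$, yielding
\[
\left(\Phi_{\GJ}*\Fc_\ell^\vee\right)(g)
=
|\det g|_F^{\frac{n}{2}}\left(\psi(\tr(\cdot))*\bigl(|\det(\cdot)|_F^{\frac{n}{2}}\Fc_\ell\bigr)^\vee\right)(g).
\]

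Next I would eliminate the extra determinantal factor inside the convolution. The function $|\det(\cdot)|_F^{n/2}\Fc_\ell$ is supported in $K_\ell \subset K$, and on that support $|\det(\cdot)|_F^{n/2} \equiv 1$. Hence $|\det(\cdot)|_F^{n/2}\Fc_\ell = \Fc_\ell$ pointwise on $\RG_n(F)$, and therefore $\bigl(|\det(\cdot)|_F^{n/2}\Fc_\ell\bigr)^\vee = \Fc_\ell^\vee$. Substituting this into the displayed equation gives the required identity
\[
\left(\Phi_{\GJ}*\Fc_\ell^\vee\right)(g)
=
|\det g|_F^{\frac{n}{2}}\left(\psi(\tr(\cdot))*\Fc_\ell^\vee\right)(g)
\]
for every $g \in \RG_n(F)$ and every $\ell \geq 1$.

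There is essentially no analytic obstacle here, because the convolutions are all well-defined as integrals over the compact set $K_\ell$ (the function $\Fc_\ell^\vee$ has compact support in $K$). The only point that warrants care is the verification that $\Fc_\ell$ genuinely lies in $\CS_{\std}(\RG_n(F))$, so that \eqref{convolution} is legitimately applicable; this is immediate from the containment $K_\ell \subset K$ and the triviality of $|\det|_F$ on $K$.
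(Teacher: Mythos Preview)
Your proof is correct and uses essentially the same key observation as the paper: that $|\det|_F \equiv 1$ on the support $K_\ell$ of $\Fc_\ell$. The only difference is cosmetic---you invoke the general identity \eqref{convolution} and then simplify $|\det(\cdot)|_F^{n/2}\Fc_\ell = \Fc_\ell$, whereas the paper expands the convolution integral directly and uses that $\Fc_\ell(g^{-1}h)\neq 0$ forces $|\det h|_F = |\det g|_F$; these are the same argument in slightly different packaging.
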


\begin{proof}
For any $g\in\RG_n(F)$, by definition, we have  that
\begin{align}\label{pv-1}
\left(\Phi_{\GJ}*\Fc_{\ell}^\vee\right)(g)
=
\int_{\RG_n(F)}\psi(\tr h)|\det h|_F^{\frac{n}{2}}\Fc_{\ell}(g^{-1}h)\ud h.
\end{align}
It is clear that $\Fc_{\ell}(g^{-1}h)\neq 0$ if and only if $g^{-1}h\in K_\ell$. Hence
$|\det h|_F=|\det g|_F$. The right-hand side of \eqref{pv-1} can be written as
\begin{align}\label{pv-2}
|\det g|_F^{\frac{n}{2}}
\int_{\RG_n(F)}\psi(\tr h)\Fc_{\ell}(g^{-1}h)\ud h
=
|\det g|_F^{\frac{n}{2}}
\left(\psi(\tr(\cdot))*\Fc_\ell^\vee\right)(g).
\end{align}
This verifies the lemma.
\end{proof}

\begin{lem}\label{lem:k-ell}
For any $\pi\in\Pi_F(n)$ and any $\ell\geq 1$, the integral
\begin{align}\label{k-ell}
\int_{\det g=x}
\left(\Phi_{\GJ}*\Fc_{\ell}^\vee
\right)
(g)\vphi_{\wt{\pi}}(g)\ud_xg
\end{align}
converges absolutely for any $x\in F^\times$, where $\Phi_{\GJ}$ is the kernel function as defined in \eqref{GJ-kernel} and $\vphi_{\wt{\pi}}(g)$ is a matrix coefficient in $\CC(\wt{\pi})$.
\end{lem}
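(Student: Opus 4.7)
The plan is to show that the integrand $(\Phi_{\GJ} * \Fc_\ell^\vee)(g)$ is in fact supported on the compact set $\RM_n(\Fp^{-\ell})$ (independent of $x$), up to a phase factor, and then conclude by boundedness of the matrix coefficient on compacta.

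First, I would invoke Lemma \ref{lem:pv-1} to reduce the question to the convolution
\[
(\psi(\tr(\cdot)) * \Fc_\ell^\vee)(g) = \frac{1}{\vol(K_\ell)}\int_{K_\ell}\psi(\tr(gk))\,dk.
\]
Using the parametrization $K_\ell = \RI_n + \RM_n(\Fp^\ell)$, under which the Haar measure on $K_\ell$ coincides with the additive measure $\ud^+m$ on $\RM_n(\Fp^\ell)$ (noting $|\det(\RI_n+m)|_F = 1$), the integral factors as
\[
(\psi(\tr(\cdot)) * \Fc_\ell^\vee)(g) = \frac{\psi(\tr g)}{\vol(\RM_n(\Fp^\ell))}\int_{\RM_n(\Fp^\ell)}\psi(\tr(gm))\,\ud^+m.
\]

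Next I would carry out the inner Fourier integral. Self-duality of $\ud^+m$ under $\psi$, together with the fact that $\psi$ is trivial on $\Fo_F$ but not on $\vpi^{-1}\Fo_F$, identifies the annihilator of $\RM_n(\Fp^\ell)$ under the pairing $(g,m)\mapsto\psi(\tr(gm))$ as $\RM_n(\Fp^{-\ell})$. Hence
\[
\frac{1}{\vol(\RM_n(\Fp^\ell))}\int_{\RM_n(\Fp^\ell)}\psi(\tr(gm))\,\ud^+m = \mathbf{1}_{\RM_n(\Fp^{-\ell})}(g),
\]
and combining with Lemma \ref{lem:pv-1} one obtains the clean formula
\[
(\Phi_{\GJ}*\Fc_\ell^\vee)(g) = \Phi_{\GJ}(g)\cdot\mathbf{1}_{\RM_n(\Fp^{-\ell})}(g).
\]

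Finally I would conclude by support/compactness considerations. For any fixed $x\in F^\times$, the locus
\[
\{g\in\RG_n(F)_x \mid g\in\RM_n(\Fp^{-\ell})\}
\]
is the intersection of the closed subset $\{g\in\RM_n(F)\mid \det g = x\}$ of $\RM_n(F)$ with the compact additive lattice $\RM_n(\Fp^{-\ell})$, hence compact, and sits inside $\RG_n(F)$ since $x\neq 0$. On this compact set, $|\Phi_{\GJ}(g)| = |x|_F^{n/2}$ is constant, and the matrix coefficient $\vphi_{\wt\pi}$, being smooth (locally constant) on $\RG_n(F)$, is bounded. Therefore
\[
\int_{\det g = x}\bigl|(\Phi_{\GJ}*\Fc_\ell^\vee)(g)\vphi_{\wt\pi}(g)\bigr|\,\ud_xg
\]
is an integral of a bounded function over a compact subset of the $\SL_n(F)$-torsor $\RG_n(F)_x$, hence finite. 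The only mildly delicate step is the Fourier-analytic computation in the second paragraph, where one must verify the measure normalizations and the identification of the annihilator lattice; everything afterwards is elementary.
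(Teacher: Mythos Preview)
Your proposal is correct and follows essentially the same approach as the paper: both invoke Lemma~\ref{lem:pv-1}, compute the convolution $(\psi(\tr(\cdot))*\Fc_\ell^\vee)(g)=\psi(\tr g)\cdot\mathbf{1}_{\RM_n(\Fp^{-\ell})}(g)$ via the additive Fourier transform on $\RM_n(\Fp^\ell)$, and conclude by noting that the integrand is compactly supported on the closed fiber $\RG_n(F)_x$. Your final repackaging as $(\Phi_{\GJ}*\Fc_\ell^\vee)(g)=\Phi_{\GJ}(g)\cdot\mathbf{1}_{\RM_n(\Fp^{-\ell})}(g)$ is a clean restatement of the paper's equation~\eqref{eq:psi-c}.
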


\begin{proof}
By Lemma \ref{lem:pv-1}, we have
\begin{align}\label{pv-3}
\int_{\det g=x}
\left(\Phi_{\GJ}*\Fc_{\ell}^\vee
\right)
(g)\vphi_{\wt{\pi}}(g)\ud_xg
=
|x|_F^{\frac{n}{2}}\int_{\det g=x}
\left(\psi(\tr(\cdot))*\Fc_\ell^\vee\right)(g)
\vphi_{\wt{\pi}}(g)\ud_xg.
\end{align}
Recall from \eqref{eq:ck} that the function $\Fc_\ell$ for $\ell\geq 1$ is the normalized characteristic function of $K_\ell= \RI_n+\RM_n(\Fp^\ell)$ and
the sequence $\{\Fc_\ell\}_{\ell=1}^\infty$ tends to the delta mass supported at $\RI_n$ as $\ell\to\infty$. Consider the convolution of $\psi(\tr(\cdot))$
with $\Fc_\ell$ over $\RG_n(F)$
\[
\psi(\tr(\cdot))*\Fc_\ell^\vee(g):=\int_{\RG_n(F)}\psi(\tr(h))\Fc_\ell(g^{-1}h)\ud h.
\]
If $\Fc_\ell(g^{-1}h)\neq 0$, then we must have that $g^{-1}h\in K_\ell$. By writing $h=g(\RI_n+X)$ with $X\in\RM_n(\Fp^\ell)$, we obtain that $\psi(\tr(h))=\psi(\tr(g))\psi(\tr(gX))$, and $g^{-1}h=\RI_n+X$. It is clear that
$\ud h=\ud^+X$. Hence we obtain that
\[
\psi(\tr(\cdot))*\Fc_\ell^\vee(g)=\psi(\tr(g))\vol(\RM_n(\Fp^\ell))^{-1}\int_{\RM_n(\Fp^\ell)}\psi(\tr(gX))\ud^+X=\psi(\tr(g)){\bf 1}_{\RM_n(\Fp^{-\ell})}(g),
\]
and the identity
\begin{align}\label{eq:psi-c}
\psi(\tr(\cdot))*\Fc_\ell(g)^\vee =
\psi(\tr(g))
\cdot
{\bf 1}_{\RM_n(\Fp^{-\ell})}(g)
\end{align}
holds as functions in $g\in\RG_n(F)$. This implies that the convolution $\psi(\tr(\cdot))*\Fc_\ell^\vee$ with any $\ell\geq 1$ belongs to the space $\CC^\infty_c(\RM_n(F))$. Since the
$\SL_n(F)$-torsor $\RG_n(F)_x$ is closed in $\RM_n(F)$,
we deduce that the integral on the right-hand side of \eqref{pv-3}
is absolutely convergent for each $\ell\geq 1$ and for any $x\in F^\times$. This finishes the proof.
\end{proof}

\begin{lem}\label{lem:stable-na}
For any $\pi\in\Pi_F(n)$, and any given open compact subset $\Fn$ of $F^\times$, the integral in \eqref{k-ell} is uniformly stable as $\ell\to\infty$, when $x$ runs in $\Fn$.
\end{lem}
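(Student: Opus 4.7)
The plan is to exploit the explicit formula \eqref{eq:psi-c} for $\psi(\tr(\cdot))*\Fc_\ell^\vee$ and then apply the vanishing result of Lemma \ref{lem:zero-na}. First, combining Lemma \ref{lem:pv-1} with \eqref{eq:psi-c}, the integral in \eqref{k-ell} can be rewritten as
\[
I_\ell(x):=|x|_F^{\frac{n}{2}}\int_{\RG_n(F)_x\,\cap\,\RM_n(\Fp^{-\ell})}\psi(\tr g)\vphi_{\wt{\pi}}(g)\ud_x g.
\]
For $\ell>m\geq 1$, the difference is
\[
I_\ell(x)-I_m(x)=|x|_F^{\frac{n}{2}}\int_{\RG_n(F)_x\,\cap\,\RM_n(\ell,m)}\psi(\tr g)\vphi_{\wt{\pi}}(g)\ud_x g,
\]
and the goal reduces to showing that this difference vanishes for all $x\in\Fn$ whenever $\ell>m$ are sufficiently large (independently of $x\in\Fn$).

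Since $\vphi_{\wt{\pi}}$ is a smooth matrix coefficient, there exists an open compact subgroup of $\RG_n(F)$ under which $\vphi_{\wt{\pi}}$ is right-invariant; in particular, there is an integer $\ell_0\geq 1$ such that $\vphi_{\wt{\pi}}(gh)=\vphi_{\wt{\pi}}(g)$ for every $h\in K^1_{\ell_0}=K_{\ell_0}\cap\SL_n(F)$. Right multiplication by $K^1_{\ell_0}$ preserves both the fiber $\RG_n(F)_x$ (as $\det h=1$) and the subset $\RM_n(\ell,m)$ (since $K^1_{\ell_0}\subset K=\RG_n(\Fo_F)$, which stabilizes $\RM_n(\Fp^{-\ell})$ and $\RM_n(\Fp^{-m})$ under right translation). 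Hence, averaging over $K^1_{\ell_0}$ and applying Fubini on the compact domain $K^1_{\ell_0}$, we may rewrite
\[
I_\ell(x)-I_m(x)=\frac{|x|_F^{\frac{n}{2}}}{\vol(K^1_{\ell_0})}\int_{\RG_n(F)_x\,\cap\,\RM_n(\ell,m)}\vphi_{\wt{\pi}}(g)\left(\int_{K^1_{\ell_0}}\psi(\tr(gh))\ud h\right)\ud_x g.
\]

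Now I invoke Lemma \ref{lem:zero-na}. Let $\ell_\Fn$ be the integer furnished by that lemma for the open compact set $\Fn$, and enlarge it if necessary so that $\ell_\Fn\geq\ell_0$. Then for every $\ell>m>\ell_\Fn$ and every $g$ in the domain of integration (so $g\in\RM_n(\ell,m)$ with $\det g=x\in\Fn$), the inner integral $\int_{K^1_{\ell_0}}\psi(\tr(gh))\ud h$ vanishes identically. Consequently $I_\ell(x)=I_m(x)$ for all $\ell>m>\ell_\Fn$ and all $x\in\Fn$, which is exactly the uniform stability asserted.

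The only potentially delicate point is the compatibility of the level $\ell_0$ (governing the smoothness of $\vphi_{\wt{\pi}}$) with the level $\ell_\Fn$ (governing the vanishing in Lemma \ref{lem:zero-na}); this is handled by the simple observation that enlarging $\ell_\Fn$ does not harm the hypotheses of that lemma, so a single threshold $\max(\ell_0,\ell_\Fn)$ serves for both. All remaining manipulations (interchange of integrals on compact domains, stability of $\RM_n(\Fp^{-\ell})$ under $K$) are routine.
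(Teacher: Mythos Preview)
Your proposal is correct and follows essentially the same route as the paper's proof: rewrite the integral via \eqref{eq:psi-c}, express the difference $I_\ell-I_m$ as an integral over $\RG_n(F)_x\cap\RM_n(\ell,m)$, average over a small $K^1$-subgroup using smoothness of $\vphi_{\wt{\pi}}$, and kill the inner integral with Lemma~\ref{lem:zero-na}. Two small points are worth tightening. First, Lemma~\ref{lem:zero-na} is stated only for $n\geq 2$, so you should dispose of $n=1$ separately (trivial: for $x\in\Fn$ compact the domain $\{x\}\cap\RM_1(\ell,m)$ is empty once $m$ is large). Second, your bookkeeping ``enlarge $\ell_\Fn$ so that $\ell_\Fn\geq\ell_0$'' goes the wrong way: Lemma~\ref{lem:zero-na} yields vanishing of the integral over $K^1_{\ell_\Fn}$, whereas you average over the larger group $K^1_{\ell_0}$. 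The clean fix (which is what the paper does implicitly) is to replace $\ell_0$ by $\max(\ell_0,\ell_\Fn)$ and average over \emph{that} $K^1$-subgroup; smoothness of $\vphi_{\wt{\pi}}$ persists under shrinking, and Lemma~\ref{lem:zero-na} then applies directly.
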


\begin{proof}
When $n=1$, we have that $G_1(F)=F^\times$. The integral in \eqref{k-ell} is equal to
\[
\left(\Phi_{\GJ}*\Fc_{\ell}^\vee\right)(x)\vphi_{\wt{\pi}}(x).
\]
The assertion in the lemma is obvious. In the following, we assume that $n\geq 2$ and need Lemma \ref{lem:zero-na} for the proof.

By Lemma \ref{lem:pv-1} and the identity in \eqref{pv-3},
we have to show that for any $x\in F^\times$, the following integral
\[
\int_{\det g=x}
\bigg(\psi(\tr(\cdot))*\Fc_{\ell}^\vee
\bigg)
(g)\vphi_{\wt{\pi}}(g)\ud_xg
\]
is independent of $\ell\geq 1$ as long as $\ell$ is sufficiently large, and this stability is uniform when $x$ runs in any given open compact neighborhood $\Fn$ in $F^\times$. In other words, given any
open compact subset $\Fn$ of $F^\times$, there exists a sufficiently large integer $\ell_\Fn$ such that as long as $\ell, m>\ell_\Fn$, we have
\begin{align}\label{eq:stable-2}
\int_{\det g=x}
\bigg(\psi(\tr(\cdot))*\Fc_{\ell}^\vee
\bigg)
(g)\vphi_{\wt{\pi}}(g)\ud_xg
=
\int_{\det g=x}
\bigg(\psi(\tr(\cdot))*\Fc_{m}^\vee
\bigg)
(g)\vphi_{\wt{\pi}}(g)\ud_xg
\end{align}
for all $x\in\Fn$.

In order to prove the equality in \eqref{eq:stable-2}, we consider the convolution
$\psi(\tr(\cdot))*(\Fc_\ell^\vee-\Fc_m^\vee)(g)$.
From \eqref{eq:psi-c}, it is clear that for $\ell>m$, we have
$$
\psi(\tr(\cdot))*(\Fc_\ell^\vee-\Fc_m^\vee)(g) = \psi(\tr(g))
\mathbf{1}_{\RM_n(\Fp^{-\ell})\bs\RM_n(\Fp^{-m})}(g)
=
\psi(\tr(g))\mathbf{1}_{\RM_n(\ell,m)}(g),
$$
where $\RM_n(\ell,m)=\RM_n(\Fp^{-\ell})\bs\RM_n(\Fp^{-m})$.
In order to prove \eqref{eq:stable-2}, it is enough to show that for $\ell>m\geq 1$ sufficiently large, we have
\begin{align}\label{eq:=0}
\int_{\det g=x}
\bigg(
\psi(\tr(\cdot))*(\Fc_\ell^\vee-\Fc_m^\vee)\bigg)(g)
\vphi_{\wt{\pi}}(g)\ud_x g=0.
\end{align}
We write the integral on the left-hand side of $\eqref{eq:=0}$ as
\begin{align}\label{eq:=01}
\int_{\det g=x}
\psi(\tr(g))
\mathbf{1}_{\RM_n(\ell,m)}(g)
\vphi_{\wt{\pi}}(g)\ud_xg
=
\int_{\substack{\det g=x,\\
g\in \RM_n(\ell,m)
}}
\psi(\tr(g))
\vphi_{\wt{\pi}}(g)\ud_xg.
\end{align}
In order to show that the integral on the right-hand side of \eqref{eq:=01} is zero for $\ell>m\geq 1$ sufficiently large, we have to explore the properties of the integrand and the integration domain.

Since $\vphi_{\wt{\pi}}\in \CC(\wt{\pi})$, there exists a positive integer $n_0:=n_{\vphi_{\wt{\pi}}}$, depending on the function $\vphi_{\wt{\pi}}$,
such that $\vphi_{\wt{\pi}}$ is invariant under the left and right translation of the open compact subgroup $K_{n_0}$. Define
$$
K^1_{n_0} =
K_{n_0}\cap \SL_n(F),
$$
which is open and compact in $\SL_n(F)$. Then for any $g\in \RM_n(\ell,m)$ with $\det g=x$, and
$h\in K^1_{n_0}$,
we have that $\det gh=\det hg=\det g=x$ and
\[
 hg, gh\in \RM_n(\ell,m).
\]
Also for the given $\vphi_{\wt{\pi}}(g)$, we have
$$
\vphi_{\wt{\pi}}(g) = \vphi_{\wt{\pi}}(gh) = \vphi_{\wt{\pi}}(hg).
$$
These properties imply that
\begin{align}\label{eq:=02}
\int_{\substack{\det g=x,\\
g\in \RM_n(\ell,m)
}}
\psi(\tr(g))
\vphi_{\wt{\pi}}(g)\ud_xg
=
\int_{\substack{\det g=x,\\
g\in \RM_n(\ell,m)
}}
\psi(\tr(gh))
\vphi_{\wt{\pi}}(g)\ud_xg
\end{align}
for the given $\vphi_{\wt{\pi}}(g)$, because the measure $\ud_xg$ is $\SL_n(F)$-invariant.
By taking the (compact) integration over $h\in K^1_{n_0}$  on both sides of \eqref{eq:=02}, we obtain the following identity:
\begin{align}\label{eq:=03}
\vol(K^1_{n_0})
\int_{\substack{\det g=x,\\
g\in \RM_n(\ell,m)
}}
\psi(\tr(g))
\vphi_{\wt{\pi}}(g)\ud_xg
=
\int_{h\in K^1_{n_0}}\ud h
\int_{\substack{\det g=x,\\
g\in \RM_n(\ell,m)
}}
\psi(\tr(gh))
\vphi_{\wt{\pi}}(g)\ud_xg
\end{align}
for the given $\vphi_{\wt{\pi}}(g)$. Since the double integral on the right-hand side of \eqref{eq:=03} is absolutely convergent, we are able to change the order of the integrations
and write the right-hand side of
\eqref{eq:=03} as
\[
\int_{\substack{\det g=x,\\
g\in \RM_n(\ell,m)
}}
\bigg(\int_{h\in K^1_{n_0}}
\psi(\tr(gh))
\ud h\bigg)
\vphi_{\wt{\pi}}(g)\ud_xg.
\]
By Lemma \ref{lem:zero-na} with $n_0$ sufficiently large, we obtain that the inner integral
\begin{align}\label{eq:stable-3}
\int_{h\in K^1_{n_0}}
\psi(\tr(gh))
\ud h=0
\end{align}
for any $g\in \RM_n(\ell,m)$ with $\det g=x$ and $\ell>m>n_0$ sufficiently large. This proves the equality \eqref{eq:stable-2}. By Lemma \ref{lem:zero-na} again, the vanishing of the integral in \eqref{eq:stable-3} is
uniform for $g\in\RM_n(\ell,m)$ with $\det g=x$ belonging to any given open compact subset $\Fn$ of $F^\times$. This implies that the equality in \eqref{eq:stable-2} holds uniformly when $x$ runs in
any given open compact subset $\Fn$ of $F^\times$.
\end{proof}

\subsection{$\pi$-Kernel function on $\GL_1$}\label{ssec-KF-na}

For any $\pi\in \Pi_F(n)$, we fix $\vphi_{\wt{\pi}} \in \CC(\wt{\pi})$ with $\vphi_{\wt{\pi}}(\RI_n) = 1$. We define the $\pi$-kernel function:
\begin{align}\label{kernel-na}
k_{\pi,\psi}(x) :=
\int^\reg_{\det g=x}
\Phi_{\GJ}(g)\vphi_{\wt{\pi}}(g)\ud_x g=
|x|^{\frac{n}{2}}_F\int^\reg_{\det g=x}\psi(\tr(g))
\vphi_{\wt{\pi}}(g)\ud_x g
\end{align}
where $\Phi_{\GJ}$ is the kernel function as defined in \eqref{GJ-kernel}, the measure
$\ud_x g$, as given in the integral in \eqref{fibration}, is the measure on the fiber
$\RG_n(F)_x$
of the map $\det\colon\GL_n(F)\to F^\times$ as in \eqref{fiber}. Here the regularized integral is explicitly defined by
\begin{align}\label{kernel-pv}
\int^\reg_{\det g=x}\Phi_{\GJ}(g)
\vphi_{\wt{\pi}}(g)\ud_x g
:=
\lim_{\ell\to \infty}
\int_{\det g=x}
\left(\Phi_{\GJ}*\Fc_{\ell}^\vee
\right)
(g)\vphi_{\wt{\pi}}(g)\ud_xg
\end{align}
where the convolution of $\Phi_{\GJ}$ with $\Fc_\ell^\vee$ is taken over $\RG_n(F)$.

\begin{prp}[$\pi$-Kernel Function]\label{k-smooth}
For any $\pi\in \Pi_F(n)$, the $\pi$-kernel function $k_{\pi,\psi}(x)$ defined in \eqref{kernel-na} is a smooth function on $F^\times$.
\end{prp}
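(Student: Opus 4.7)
The plan is to assemble the three lemmas \ref{lem:pv-1}, \ref{lem:k-ell}, and \ref{lem:stable-na} with \cite[Proposition 3.2]{JL21}, which already gives smoothness of fiber integrals of Schwartz functions on $\RG_n(F)$ against matrix coefficients. Recall that in the non-Archimedean setting, smoothness means locally constant, so it suffices to show that on each open compact neighborhood $\Fn$ of an arbitrary $x_0 \in F^\times$, the kernel function $k_{\pi,\psi}(x)$ coincides with a smooth (locally constant) function.

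First I fix such an $\Fn$. By Lemma \ref{lem:stable-na}, there exists $\ell_\Fn$ such that for all $\ell \geq \ell_\Fn$ and all $x \in \Fn$, the truncated integral
\[
\int_{\det g=x}\bigl(\Phi_{\GJ} \ast \Fc_{\ell}^\vee\bigr)(g)\,\vphi_{\wt{\pi}}(g)\,\ud_x g
\]
is independent of $\ell$. Consequently the regularized integral \eqref{kernel-pv} defining $k_{\pi,\psi}(x)$ exists on $\Fn$ and equals this stationary value for any $\ell \geq \ell_\Fn$.

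Next, set $\xi_\ell(g) := (\Phi_{\GJ} \ast \Fc_{\ell}^\vee)(g)$. Combining Lemma \ref{lem:pv-1} with the identity \eqref{eq:psi-c} obtained in the proof of Lemma \ref{lem:stable-na}, we have
\[
\xi_\ell(g) = |\det g|_F^{n/2} \cdot \psi(\tr g) \cdot \mathbf{1}_{\RM_n(\Fp^{-\ell})}(g).
\]
Thus $|\det g|_F^{-n/2} \xi_\ell(g) = \psi(\tr g)\,\mathbf{1}_{\RM_n(\Fp^{-\ell})}(g)$ is compactly supported and locally constant on $\RM_n(F)$, i.e.\ a Schwartz-Bruhat function. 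By the definition \eqref{GJ-SS}, this shows $\xi_\ell \in \CS_{\std}(\RG_n(F))$.

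Now I invoke \cite[Proposition 3.2]{JL21}, applied with $\xi = \xi_\ell$ and $\vphi_\pi = \vphi_{\wt{\pi}} \in \CC(\wt{\pi})$, to conclude that
\[
\phi_{\xi_\ell, \vphi_{\wt{\pi}}}(x) = \int_{\det g = x} \xi_\ell(g)\,\vphi_{\wt{\pi}}(g)\,\ud_x g
\]
is absolutely convergent and smooth (locally constant) on $F^\times$. The previous paragraph shows that $k_{\pi,\psi}(x) = \phi_{\xi_\ell, \vphi_{\wt{\pi}}}(x)$ for every $x \in \Fn$ once $\ell \geq \ell_\Fn$. Hence $k_{\pi,\psi}$ agrees on $\Fn$ with a smooth function, and since $\Fn$ can be taken as an open compact neighborhood of any point of $F^\times$, this gives smoothness of $k_{\pi,\psi}$ on all of $F^\times$. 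The hard work is carried out in Lemma \ref{lem:stable-na}; beyond that, the only substantive step is the identification $\xi_\ell \in \CS_{\std}(\RG_n(F))$, after which \cite[Proposition 3.2]{JL21} finishes the argument without further obstacle.
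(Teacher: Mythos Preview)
Your proof is correct and follows essentially the same approach as the paper. The paper's own proof is very terse: it cites Lemma~\ref{lem:k-ell} for absolute convergence of each truncated integral, cites Lemma~\ref{lem:stable-na} for uniform stabilization on compact open neighborhoods, and then declares smoothness. You make explicit the step the paper leaves implicit, namely that each truncated integral $\phi_{\xi_\ell,\vphi_{\wt\pi}}$ is itself smooth, by identifying $\xi_\ell\in\CS_{\std}(\RG_n(F))$ and invoking \cite[Proposition~3.2]{JL21}. One minor correction: the identity \eqref{eq:psi-c} is derived in the proof of Lemma~\ref{lem:k-ell}, not Lemma~\ref{lem:stable-na} (it is only \emph{used} again in the latter).
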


\begin{proof}
For any $\pi\in \Pi_F(n)$ and any $\ell>0$, by Lemma \ref{lem:k-ell}, the integral
\[
\int_{\det g=x}
\left(\Phi_{\GJ}*\Fc_{\ell}^\vee
\right)
(g)\vphi_{\wt{\pi}}(g)\ud_xg
\]
converges absolutely for any $x\in F^\times$.
By Lemma \ref{lem:stable-na}, the limit on the right-hand side of \eqref{kernel-pv} is
stably convergent for all $x\in F^\times$ and the convergnce is uniformly stable when $x$ runs in any open compact neighborhood $\Fn$ of $x$ in $F^\times$. Hence the $\pi$-kernel function $k_{\pi,\psi}(x)$ as defined in \eqref{kernel-na} is smooth at any $x\in F^\times$.
\end{proof}

\begin{thm}[Kernel and $\gam$-Function: non-Archimedean case]\label{thm:KF-na}
For any $\pi\in \Pi_F(n)$, the following principal value integral
\begin{align}\label{hankel-na}
\int^\pv_{F^\times}
k_{\pi,\psi}(x)
\chi_s(x^{-1})\ud^\times x
=
\lim_{\ell\to \infty}\sum_{m=-\ell}^\ell
\int_{\RS_m}
k_{\pi,\psi}(x)
\chi_s(x^{-1})\ud^\times x,
\end{align}
with $\RS_m:=\{x\in F^\times\mid |x|_F=q^{-m}\}$, is convergent for $\Re(s)$ sufficiently small, admits a
meromorphic continuation to $s\in \BC$, and is equal to
$
\gam(s+\frac{1}{2},\pi\times \chi,\psi).
$
\end{thm}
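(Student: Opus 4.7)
The plan is to reduce the claim to the Godement--Jacquet functional equation applied to the sequence of normalized indicators $\{\Fc_N\}$ and to transfer the resulting identity to $\GL_1$ via the fibration over the determinant map, using the uniform stability from Lemma~\ref{lem:stable-na}. Throughout, fix $\varphi_\pi\in\CC(\pi)$ with $\varphi_\pi^\vee=\varphi_{\wt\pi}$ and $\varphi_\pi(\RI_n)=1$.

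First, I would observe that by Lemma~\ref{lem:pv-1} and \eqref{eq:psi-c}, the function $\CF_{\GJ}(\Fc_N)(g)=\psi(\tr g)|\det g|_F^{n/2}\mathbf{1}_{\RM_n(\Fp^{-N})}(g)$ lies in $\CS_{\std}(\GL_n(F))$. Applying the Godement--Jacquet functional equation \eqref{GJFE} to $\xi=\Fc_N$ with $\chi^{-1}$, then using Fubini along the determinant fibration, yields
\[
\int_{F^\times}k_N(x)\chi_s(x^{-1})\,\ud^\times x=\gamma\!\left(s+\tfrac{1}{2},\pi\times\chi,\psi\right)\cdot\CZ\!\left(s+\tfrac{1}{2},\Fc_N,\varphi_\pi,\chi\right),
\]
an identity valid on a half-plane $\Re(s)<c_{\pi,\chi}$ (whose threshold depends only on $\pi$ and $\chi$, not on $N$) and extending meromorphically to $s\in\BC$. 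For $N$ exceeding the conductors of $\varphi_\pi$ and of $\chi$, the matrix coefficient is $K_N$-bi-invariant, $\chi\circ\det$ is trivial on $\det(K_N)\subset 1+\Fp^N$, and $|\det g|_F=1$ on $K_N$, so the right factor collapses to $\int_{\GL_n(F)}\Fc_N(g)\,\ud g=1$. This gives the key identity $\int_{F^\times}k_N(x)\chi_s(x^{-1})\,\ud^\times x=\gamma(s+\tfrac{1}{2},\pi\times\chi,\psi)$ for all sufficiently large $N$.

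Next, for each $\ell\geq 1$, Lemma~\ref{lem:stable-na} supplies an $N_\ell$ such that $k_N(x)=k_{\pi,\psi}(x)$ for every $N\geq N_\ell$ and every $x$ with $q^{-\ell}\leq|x|_F\leq q^\ell$. Picking $N=N_\ell$ large enough also for the simplification of the previous paragraph, I would split the identity into its symmetric and tail contributions:
\[
\gamma\!\left(s+\tfrac{1}{2},\pi\times\chi,\psi\right)=\sum_{m=-\ell}^{\ell}\int_{\RS_m}k_{\pi,\psi}(x)\chi_s(x^{-1})\,\ud^\times x+T_\ell(s),
\]
where $T_\ell(s):=\sum_{|m|>\ell}\int_{\RS_m}k_{N_\ell}(x)\chi_s(x^{-1})\,\ud^\times x$; since the full integral is eventually independent of $N$, the quantity $T_\ell(s)$ depends only on $\ell$ once $N_\ell$ is chosen large enough. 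Consequently the symmetric partial sum converges to $\gamma(s+\tfrac{1}{2},\pi\times\chi,\psi)$ if and only if $T_\ell(s)\to 0$ as $\ell\to\infty$.

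The main obstacle is therefore the asymptotic $T_\ell(s)\to 0$, required to hold for $\Re(s)$ sufficiently small. The outer piece (terms with $m<-\ell$, so $|x|_F>q^\ell$) involves only finitely many annuli, since $k_{N_\ell}$ vanishes for $|x|_F>q^{nN_\ell}$; each annular integral would be bounded by exploiting the oscillation of $\psi(\tr g)$ on the truncated fibers $\GL_n(F)_x\cap\RM_n(\Fp^{-N_\ell})$, together with explicit volume bounds for these fibers. The inner piece (terms with $m>\ell$, $|x|_F<q^{-\ell}$) would be handled via the prefactor $|x|_F^{n/2}$ in the fibration formula combined with a uniform control on $\int_{\GL_n(F)_x\cap\RM_n(\Fp^{-N_\ell})}|\varphi_{\wt\pi}(g)|\,\ud_xg$ coming from smoothness and local boundedness of $\varphi_{\wt\pi}\in\CC(\wt\pi)$. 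Once $T_\ell(s)\to 0$ is secured in the required range, the principal-value integral converges for $\Re(s)<c_{\pi,\chi}$ to $\gamma(s+\tfrac{1}{2},\pi\times\chi,\psi)$, and the meromorphic continuation to all of $\BC$ is inherited from that of the local Langlands $\gamma$-function.
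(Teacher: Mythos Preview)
Your reduction to the Godement--Jacquet functional equation and the simplification $\CZ(s+\tfrac12,\Fc_N,\vphi_\pi,\chi)=1$ for large $N$ are correct, and the paper uses exactly these ingredients. The genuine gap is the tail estimate $T_\ell(s)\to 0$, which you identify as the main obstacle but do not prove. Your decomposition shows only that $T_\ell(s)=\gam(s+\tfrac12,\pi\times\chi,\psi)-\sum_{|m|\le\ell}\int_{\RS_m}k_{\pi,\psi}\chi_s^{-1}$, so establishing $T_\ell(s)\to 0$ is equivalent to the claim itself unless you can bound the tail independently. The sketches you give do not do this: for the outer piece, $N_\ell$ grows with $\ell$, so the number of annuli $nN_\ell-\ell$ need not stay bounded, and ``exploiting oscillation'' is not made precise; for the inner piece, the phrase ``local boundedness of $\vphi_{\wt\pi}$'' is misleading, since matrix coefficients of general $\pi\in\Pi_F(n)$ are locally constant but may grow on the fibers $\RG_n(F)_x\cap\RM_n(\Fp^{-N_\ell})$ as $|x|\to 0$ with $N_\ell\to\infty$, and the fiber volumes also grow. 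Absolute convergence of the full integral for each fixed $N$ gives no uniform control as $N=N_\ell$ varies with $\ell$.

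The paper bypasses the tail problem entirely. Instead of summing and then truncating, it computes each annular integral on its own: using the identity $\Fc_\ell(g^{-1}h)\neq 0\Rightarrow h\in\RG_n(F)_m$ whenever $g\in\RG_n(F)_m$, it rewrites $\int_{\RS_m}k_{\pi,\psi}\chi_s^{-1}$ as the action of the truncated kernel $\Phi_{\GJ,m}:=\mathbf{1}_{\RG_n(F)_m}\cdot\Phi_{\GJ}$, which by \cite[Proposition~2.8]{JL21} lies in the Bernstein center. This yields the exact identity $\int_{\RS_m}k_{\pi,\psi}\chi_s^{-1}=f_{\GJ,m}(\wt{\pi_{\chi_s}})$ for every $m$, and the convergence of the symmetric partial sums $\sum_{|m|\le\ell}f_{\GJ,m}(\wt{\pi_{\chi_s}})$ to the gamma function is then imported directly from \cite[Proposition~2.8(2)]{JL21}. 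In effect, the Bernstein-center Laurent expansion of $\gam$ replaces the analytic tail estimate you are missing.
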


\begin{proof}
By Lemma \ref{lem:stable-na},
there exists an integer $\ell_m$, depending on $m$ only, such that for any $\ell>\ell_m$,
$$
k_{\pi,\psi}(x) =
\int_{\det g=x}
\left(\Phi_{\GJ}*\Fc_\ell^\vee\right)(g)
\vphi_{\wt{\pi}}(g)\ud_x g
$$
for any $x\in\RS_m$. Consider the Mellin integral of the kernel function $k_{\pi,\psi}(x)$ along the slice
$\RS_m$:
\begin{align}\label{k-na-1}
\int_{\RS_m}
k_{\pi,\psi}(x)
\chi_s^{-1}(x)\ud^\times x
&=
\int_{\RS_m}\int_{\det g=x}\left(\Phi_{\GJ}*\Fc_\ell^\vee\right)(g)
\vphi_{\wt{\pi}}(g)\ud_x g\ \chi_s^{-1}(x)\ud^\times x\nonumber\\
&=
\int_{\RG_n(F)_m}
\left(\Phi_{\GJ}*\Fc_\ell^\vee\right)(g)
\vphi_{\wt{\pi}[\chi_s^{-1}]}(g)\ud g\nonumber\\
&=
\int_{\RG_n(F)_m}
\int_{\RG_n(F)}\Phi_{\GJ}(h)\Fc_\ell(g^{-1}h)\ud h\
\vphi_{\wt{\pi}[\chi_s^{-1}]}(g)\ud g
\end{align}
where $\vphi_{\wt{\pi}[\chi^{-1}_s]}(g) = \vphi_{\wt{\pi}}(g)\chi_s^{-1}(\det g)$ and for any $m\in\BZ$,
\begin{align}\label{k-na-1-1}
\RG_n(F)_m := \{g\in \RG_n(F)\mid \ |\det g|_F = q^{-m}_F\}.
\end{align}
By definition, we have that $\Fc_\ell(g^{-1}h)\neq 0$ if and only if $g^{-1}h\in K_\ell$. Hence under the
condition that $\Fc_\ell(g^{-1}h)\neq 0$, we have $g\in\RG_n(F)_m$ if and only if $h\in\RG_n(F)_m$.
This implies that the last integral in \eqref{k-na-1} is equal to
\[
\int_{\RG_n(F)_m}
\int_{\RG_n(F)_m}\Phi_{\GJ}(h)\Fc_\ell(g^{-1}h)\ud h\
\vphi_{\wt{\pi}[\chi_s^{-1}]}(g)\ud g
\]
which can also be written as
\begin{align}\label{k-na-2}
\int_{\RG_n(F)}
\int_{\RG_n(F)_m}\Phi_{\GJ}(h)\Fc_\ell(g^{-1}h)\ud h\
\vphi_{\wt{\pi}[\chi_s^{-1}]}(g)\ud g
\end{align}
For $g\in\RG_n(F)_m$, we have
\begin{align*}
\int_{\RG_n(F)_m}\Phi_{\GJ}(h)\Fc_\ell(g^{-1}h)\ud h
=
\int_{\RG_n(F)}{\bf 1}_{\RG_n(F)_m}(g)\Phi_{\GJ}(h)\Fc_\ell(g^{-1}h)\ud h
=
\left(\Phi_{\GJ,m}*\Fc^\vee_\ell\right)(g),
\end{align*}
where ${\bf 1}_{\RG_n(F)_m}(g)$ is the characteristic function of $\RG_n(F)_m$, and
$\Phi_{\GJ,m}(g)$ is defined to be
\begin{align}\label{k-na-2-1}
\Phi_{\GJ,m}(g):={\bf 1}_{\RG_n(F)_m}(g)\Phi_{\GJ}(h).
\end{align}
By \cite[Proposition 2.8]{JL21}, the function $\Phi_{\GJ,m}(g)$ defines an invariant, essentially
compact distribution on $\RG_n(F)$ and hence belongs to the Bernstein center $\FZ(\RG_n(F))$
of $\RG_n(F)$.
Thus, together with \eqref{k-na-2}, we obtain that
\begin{align}\label{k-na-3}
\int_{\RS_m}
k_{\pi,\psi}(x)
\chi_s^{-1}(x)\ud^\times x
=
\int_{\RG_n(F)}
\left(\Phi_{\GJ,m}*\Fc^\vee_\ell\right)(g)
\vphi_{\wt{\pi}[\chi_s^{-1}]}(g)\ud g,
\end{align}
which is equal to $\CZ(1-(s+\frac{1}{2}),\CF_{\GJ,m}(\Fc_\ell),\vphi_{\pi[\chi]}^\vee)$,
where $\CF_{\GJ,m}(\Fc_\ell)(g)=(\Phi_{\GJ,m}*\Fc^\vee_\ell)(g)$ as in
\cite[Equation 2.18]{JL21}.
By the functional equation in \cite[Equation (2.24)]{JL21}, we have
\[
\CZ(1-s,\CF_{\GJ,m}(\xi),\vphi^\vee_{\pi[\chi]})
=
f_{\GJ,m}(\wt{\pi_{\chi_{s-\frac{1}{2}}}})\cdot
\CZ(s,\xi,\vphi_{\pi[\chi]})
\]
for any $\xi\in\CC_c^\infty(\RG_n(F))$, $\vphi_\pi\in\CC(\pi)$ and $\chi\in\FX(F^\times)$,
where the function $f_{\GJ,m}$ denotes the regular function on the Bernstein variety
$\Ome(\RG_n(F))$ of $\RG_n(F)$.
Together with \eqref{k-na-3}, we obtain that
\begin{align}\label{k-na-4}
\int_{\RS_m}
k_{\pi,\psi}(x)
\chi_s^{-1}(x)\ud^\times x
=
f_{\GJ,m}(\wt{\pi_{\chi_s}})\cdot
\CZ(s+\frac{1}{2},\Fc_\ell,\vphi_{\pi[\chi]}).
\end{align}
By definition, we have
\[
\CZ(s+\frac{1}{2},\Fc_\ell,\vphi_{\pi[\chi]})
=
\int_{\RG_n(F)}\Fc_\ell(g)\vphi_\pi(g)\chi(\det g)|\det g|^s\ud g.
\]
When $\ell>\ell_m$ is sufficiently large, $K_\ell=\RI_n+\RM_n(\Fp^\ell)$ is a sufficiently small
open compact (principal congruence) subgroup of $\RG_n(\Fo_F)$. Hence we have
$\vphi_\pi(g)=\vphi_\pi(\RI_n)=1$, and $\det g\in 1+\Fp^{\ell}$. This implies that
$|\det g|_F=1$ and $\chi(\det g)=1$. We obtain that
\[
\CZ(s+\frac{1}{2},\Fc_\ell,\vphi_{\pi[\chi]})
=
\int_{\RG_n(F)}\Fc_\ell(g)\ud g=1.
\]
Therefore, we obtain that
\begin{align}\label{k-na-5}
\int_{\RS_m}
k_{\pi,\psi}(x)
\chi_s^{-1}(x)\ud^\times x
=
f_{\GJ,m}(\wt{\pi_{\chi_s}})
\end{align}
for all $m\in\BZ$, and
\[
\int^\pv_{F^\times}
k_{\pi,\psi}(x)
\chi_s(x^{-1})\ud^\times x
=
\lim_{\ell\to \infty}\sum_{m=-\ell}^\ell
\int_{\RS_m}
k_{\pi,\psi}(x)
\chi_s(x^{-1})\ud^\times x
=
\lim_{\ell\to \infty}\sum_{m=-\ell}^\ell
f_{\GJ,m}(\wt{\pi_{\chi_s}}).
\]
By \cite[Proposition 2.8, Part (2)]{JL21},  we know that the following limit exists
\[
\lim_{\ell\to \infty}\sum_{m=-\ell}^\ell
f_{\GJ,m}(\wt{\pi_{\chi_s}})
\]
for $\Re(s)$ sufficiently small,  admits a meromorphic continuation to all $s\in\BC$,
and is equal to
\[
\gam(\frac{1}{2},\pi_{\chi_s}.\psi)=\gam(s+\frac{1}{2},\pi\times\chi,\psi).
\]
We are done.
\end{proof}

\begin{cor}\label{kernel-wd}
For any $\pi\in\Pi_F(n)$, as a distribution on $F^\times$, the $\pi$-kernel function
$k_{\pi,\psi}(x)$ is independent of the choice of the matrix coefficient
$\vphi_{\wt{\pi}} \in \CC(\wt{\pi})$ with $\vphi_{\wt{\pi}}(\RI_n) = 1$ and the chosen sequence
$\{\Fc_\ell\}_{\ell=1}^\infty$ that tends to the delta mass supported at $\RI_n$.
\end{cor}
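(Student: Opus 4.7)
The plan is to exploit the slice-wise Mellin identity that appears inside the proof of Theorem \ref{thm:KF-na}, namely equation \eqref{k-na-5}:
\[
\int_{\RS_m} k_{\pi,\psi}(x)\,\chi_s^{-1}(x)\,\ud^\times x = f_{\GJ,m}(\wt{\pi_{\chi_s}}),
\]
valid for every $m\in\BZ$ and every $\chi_s\in\FX(F^\times)$. The right-hand side depends only on $\pi$, $\chi_s$, $m$ and $\psi$, and not on the auxiliary choices of $\vphi_{\wt{\pi}}$ or of the approximate identity $\{\Fc_\ell\}$ used to define $k_{\pi,\psi}$. Since each $\RS_m=\vpi^m\cdot\Fo_F^\times$ is compact in $F^\times$, each individual slice integral is an ordinary convergent integral, so no principal value regularization is needed at the slice level, which is crucial for what follows.

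Given two admissible choices producing smooth kernels $k_{\pi,\psi}$ and $k'_{\pi,\psi}$ (both smooth by Proposition \ref{k-smooth}), I would form the difference $D:=k_{\pi,\psi}-k'_{\pi,\psi}$; applying the identity above to both kernels yields
\[
\int_{\RS_m} D(x)\,\chi_s^{-1}(x)\,\ud^\times x = 0
\]
for every $m\in\BZ$ and every $\chi_s\in\FX(F^\times)$. Next, I would test $D$ against an arbitrary $\phi_0\in\CC_c^\infty(F^\times)$: its support meets only finitely many slices $\RS_m$, and on each such slice the restriction $\phi_0|_{\RS_m}$, viewed on $\Fo_F^\times$ via the translation $x\mapsto \vpi^{-m}x$, is a smooth compactly supported function on $\Fo_F^\times$. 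By Peter--Weyl for the compact abelian group $\Fo_F^\times$, it is a finite linear combination of characters $\omega\in\widehat{\Fo_F^\times}$, each of which extends to a quasi-character of $F^\times$ (for instance by $\omega(\vpi):=1$). A finite summation then gives
\[
\int_{F^\times} D(x)\phi_0(x)\ud^\times x = 0;
\]
since $D$ is smooth on $F^\times$, this forces $D\equiv 0$ pointwise on $F^\times$.

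The only obstacle is the routine bookkeeping of the Peter--Weyl decomposition of $\phi_0|_{\RS_m}$, which is immediate from the compact support and smoothness of $\phi_0$. Conceptually, the independence of $k_{\pi,\psi}$ from the auxiliary choices is already encoded in the stronger slice-wise identity \eqref{k-na-5} that the proof of Theorem \ref{thm:KF-na} establishes en route to the global Mellin formula; Corollary \ref{kernel-wd} merely packages this fact as a well-definedness statement for the distribution $k_{\pi,\psi}$ on $F^\times$.
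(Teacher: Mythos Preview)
Your argument for independence of the matrix coefficient $\vphi_{\wt{\pi}}$ is correct and is essentially the paper's argument in different packaging: both rely on the slice identity \eqref{k-na-5}, whose right-hand side $f_{\GJ,m}(\wt{\pi_{\chi_s}})$ visibly does not see $\vphi_{\wt{\pi}}$. The paper phrases the conclusion by saying that $k_{\pi,\psi}\cdot{\bf 1}_{\RS_m}$ lies in the Bernstein center $\FZ(\RG_1(F))$ and is therefore pinned down by its action on characters; your Peter--Weyl decomposition of $\phi_0|_{\RS_m}$ into finitely many characters of $\Fo_F^\times$ is a more hands-on way of saying the same thing, and is arguably cleaner since it avoids invoking \cite{Ber84}.

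There is, however, a genuine gap in your treatment of independence from the sequence $\{\Fc_\ell\}$. You write ``both smooth by Proposition \ref{k-smooth}'' and ``applying the identity above to both kernels,'' but Proposition \ref{k-smooth}, Lemma \ref{lem:stable-na}, and the derivation of \eqref{k-na-5} in the proof of Theorem \ref{thm:KF-na} all use the \emph{specific} choice $\Fc_\ell=\vol(K_\ell)^{-1}{\bf 1}_{K_\ell}$: the stability argument exploits \eqref{eq:psi-c}, and the passage from \eqref{k-na-1} to \eqref{k-na-2} uses that $\Fc_\ell(g^{-1}h)\neq 0$ forces $|\det g|_F=|\det h|_F$. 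For an arbitrary delta sequence you do not yet know that the limit exists, is smooth, or satisfies \eqref{k-na-5}, so forming $D=k_{\pi,\psi}-k'_{\pi,\psi}$ and invoking \eqref{k-na-5} for $k'_{\pi,\psi}$ is circular. The paper closes this gap by a different mechanism: since $\{K_\ell\}$ is a basis of open compact neighborhoods of $\RI_n$, any alternative filtration of open compact neighborhoods admits a subfiltration comparable to a subsequence of the $K_\ell$'s, so the alternative limit is forced to agree with the one already computed. You should either supply this cofinality argument, or else verify directly that the ingredients behind \eqref{k-na-5} go through for a general delta sequence (with support eventually inside $K$).
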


\begin{proof}
On the one hand, for the contragredient $\wt{\pi}$ of $\pi\in\Pi_F(n)$ and for any $m\in\BZ$, we can view the function
\[
f_{\GJ,m}(\wt{\pi_{\chi_s}})
=
f_{\GJ,m}(\wt{\pi}\otimes(\chi_s)^{-1})
\]
as function of the quasi-character $\chi_s^{-1}$. On the other hand, we can write
\[
\int_{\RS_m}
k_{\pi,\psi}(x)
\chi_s(x)^{-1}\ud^\times x
=
\int_{F^\times}k_{\pi,\psi}(x){\bf 1}_{\RS_m}(x)
\chi_s(x)^{-1}\ud^\times x.
\]
The right-hand side can be viewed as the action of the function $(k_{\pi,\psi}\cdot{\bf 1}_{\RS_m})(x)$ on the quasi-character (irreducible admissible representation)
$\chi_s^{-1}$ of $\RG_1(F)=F^\times$. Hence by the identity in \eqref{k-na-5}, the function $(k_{\pi,\psi}\cdot{\bf 1}_{\RS_m})(x)$ belongs to the Berstein center $\FZ(\RG_1(F))$ of $\RG_1(F)$.
By \cite{Ber84}, the function $(k_{\pi,\psi}\cdot{\bf 1}_{\RS_m})(x)$ does not depend on the choice of the matrix coefficient $\vphi_{\wt{\pi}}(g)$ with $\vphi_{\wt{\pi}}(\RI_n)=1$,
but depends only on $\wt{\pi}$.

We can deduce that the kernel function $k_{\pi,\psi}(x)$ is independent of the chosen sequence
$\{\Fc_\ell\}_{\ell=1}^\infty$ that tends to the delta mass supported at the identity $\RI_n$
by using the Mellin
inversion formula for $f\in\CC_c^\infty(F^\times)$. If we define the kernel function $k_{\pi,\psi}(x)$ as a limit
by using any filtration of open compact neighborhoods at the identity, since the filtration of the principal
congruence subgroups $\{K_\ell\}_{\ell=1}^\infty$ forms a basis of open compact neighborhoods of
the identity of $\RG_n(F)$, we are able to choose a sub-fitration $\{K_{\ell_j}\}_{j=1}^\infty$ of
$\{K_\ell\}_{\ell=1}^\infty$, which gives the same limit for the kernel function $k_{\pi,\psi}(x)$.
We give a detailed proof for the Archimedean case (the proof of Part (3) of Theorem \ref{thm:KF-ar}).
Hence the kernel function $k_{\pi,\psi}(x)$ depends only on  $\wt{\pi}$ and the additive character $\psi=\psi_F$.
\end{proof}

\section{$\pi$-Kernel Function: Archimedean case}
\label{sec-KF-ar}

\subsection{$\pi$-Kernel function on $\GL_1$}\label{ssec-KF-ar}

Let $F$ be an Archimedean local field, which is either $\BR$ or $\BC$. In this case, we recall that $\Pi_F(n)$ is the set of equivalence classes of irreducible Casselman-Wallach representations of $\RG_n(F)$. Recall from \eqref{K} that $K$ is the maximal compact subgroup of $\RG_n(F)$, which is $\RO(n)$ if $F=\BR$ and $\RU(n)$ if $F=\BC$.

In order to define the $\pi$-kernel function $k_{\pi,\psi}(x)$ on $F^\times$ as in the non-Archimedean case (Section \ref{ssec-KF-na}), we
choose a sequence of test functions $\{\Fc_\ell\}_{\ell=1}^\infty\subset \CC^\infty_c(\RG_n(F))$, such that for any smooth function $h\in \CC_c^\infty(\RG_n(F))$,
\begin{align}\label{Fc-I}
\lim_{\ell\to \infty}
\int_{\RG_n(F)}
\Fc_\ell(g)h(g) \ud g= h(\RI_n).
\end{align}
In other words, the sequence $\{\Fc_\ell\}_{\ell=1}^\infty$ tends to the delta mass supported at the identity $\RI_n$. After multiplying the sequence by a partition function, it will be convenient to assume that the support of $\Fc_\ell$ is contained in a given compact neighborhood $\FN$ of $\RI_n$ for all $\ell$, and for any smooth $h\in \CC^\infty(\RG_n(F))$, the identity in \eqref{Fc-I} holds. In fact, it is not hard to verify that for any function $h(g)$ on $\RG_n(F)$, which is continuous in a neighborhood of the identity $\RI_n$, the identity in \eqref{Fc-I} still holds.

From \eqref{FOFT}, for any $\xi\in\CS_{\std}(\RG_n(F))$ with
$\xi(g)=|\det g|_F^{\frac{n}{2}}\cdot f(g)$ for some $f\in\CS(\RM_n(F))$,
the following identity
\[
\CF_{\GJ}(\xi)(g)=\left(\Phi_{\GJ}*\xi^\vee\right)(g)
=|\det g|_F^{\frac{n}{2}}\cdot\CF_\psi(f)(g)=
|\det g|_F^{\frac{n}{2}}\cdot\CF_\psi(|\det (\cdot)|^{-\frac{n}{2}}\xi)(g)
\]
holds. Take any $\vphi_{\wt{\pi}}\in \CC(\wt{\pi})$ with $\vphi_{\wt{\pi}}(\RI_n) = 1$, and define,
as in the non-Archimedean case, the $\pi$-kernel function
\begin{equation}\label{kernel-ar}
k_{\pi,\psi}(x)
:=
\int^\reg_{\det g=x}
\Phi_{\GJ}(g)\vphi_{\wt{\pi}}(g)\ud_x g=
|x|^{\frac{n}{2}}_F\int^\reg_{\det g=x}\psi(\tr(g))
\vphi_{\wt{\pi}}(g)\ud_x g
\end{equation}
where $\Phi_{\GJ}$ is the kernel function as defined in \eqref{GJ-kernel}.
The integral is regularized as follows:
\begin{align}\label{k-reg-1}
\int^\reg_{\det g=x}
\Phi_{\GJ}(g)\vphi_{\wt{\pi}}(g)\ud_x g
:=
\lim_{\ell\to \infty}
\int_{\det g=x}
\left(\Phi_{\GJ}*\Fc_\ell^\vee\right)(g)
\vphi_{\wt{\pi}}(g)
\ud_xg.
\end{align}
Applying \eqref{convolution} to the case that $\xi=\Fc_\ell$, we have
\[
\left(\Phi_{\GJ}*\Fc_\ell^\vee\right)(g)
=
|\det g|_F^{\frac{n}{2}}\left(\psi(\tr(\cdot))*(|\det(\cdot)|_F^{\frac{n}{2}}\Fc_\ell)^\vee\right)(g).
\]
The regularized integral in \eqref{k-reg-1} can also be written as
\begin{align}\label{k-reg-2}
\int^\reg_{\det g=x}
\Phi_{\GJ}(g)\vphi_{\wt{\pi}}(g)\ud_x g
=
|x|^{\frac{n}{2}}_F
\lim_{\ell\to \infty}
\int_{\det g=x}
\left(\psi(\tr(\cdot))*(|\det(\cdot)|_F^{\frac{n}{2}}\Fc_\ell)^\vee\right)(g)
\vphi_{\wt{\pi}}(g)
\ud_xg.
\end{align}
For each $\ell\geq 1$, by \eqref{FOFT}, we have
\begin{align}\label{k-reg-3}
\left(\Phi_{\GJ}*\Fc_\ell^\vee\right)(g)
=
|\det g|_F^{\frac{n}{2}}\cdot\CF_\psi(|\det (\cdot)|^{-\frac{n}{2}}\Fc_\ell)(g).
\end{align}
Since the support of $\Fc_\ell\in\CC_c^\infty(\RG_n(F))$ is contained in a given compact neighborhood $\FN$ of identity $\RI_n$ for every $\ell$, the function
$|\det g|^{-\frac{n}{2}}\Fc_\ell(g)$ belongs to $\CC_c^\infty(\RG_n(F))$. Hence
the classical Fourier transform $\CF_\psi(|\det (\cdot)|^{-\frac{n}{2}}\Fc_\ell)(g)$ is a Schwartz function on $\RM_n(F)$ for each $\ell$. By definition, $\left(\Phi_{\GJ}*\Fc_\ell^\vee\right)(g)\in \CS_\mathrm{std}(\RG_n(F))$.
For any $x\in F^\times$, its restriction to the closed submanifold $\RG_n(F)_x$ of
$\left(\Phi_{\GJ}*\Fc_\ell^\vee\right)(g)$ is still a Schwartz function in the sense of \cite{AG08}.
As $\pi$ is an irreducible
Casselman-Wallach representation of $\RG_n(F)$, the matrix coefficient $\vphi_{\wt{\pi}}$ is of moderate growth on $\RG_n(F)$, so is its restriction to the closed submanifold $\RG_n(F)_x$.
We deduce  that for each $\ell\geq 1$,
the following integral
\begin{equation}\label{ar-k-1}
\int_{\det g=x}
\left(\Phi_{\GJ}*\Fc_\ell^\vee\right)(g)
\vphi_{\wt{\pi}}(g)
\ud_xg
\end{equation}
is absolutely convergent. After choosing a section to the determinant morphism:
$$
t_1: F^\times\to \GL_n(F)
$$
sending $x\in F^\times$ to $t_1(x):=\diag(x,1,...,1)$, the integral \eqref{ar-k-1} can be rewritten as
$$
\int_{\SL_n(F)}
\left(
\Phi_{\GJ}*\Fc_\ell^\vee
\right)(t_1(x)g)
\vphi_{\wt{\pi}}(t_1(x)g)
\ud_1g
$$
which converges uniformly when $x$ runs in a given compact subset $\Fn$ in $F^\times$. Hence \eqref{ar-k-1} defines a smooth function in $x\in F^\times$. We summarize the discussion as the following proposition.

\begin{prp}\label{prp:k-ell-ar}
For any $\pi\in\Pi_F(n)$ and any $\ell\geq 1$, the $\ell$-th part of the kernel function
\[
k_{\pi,\psi,\ell}(x)
:=
\int_{\det g=x}
\left(\Phi_{\GJ}*\Fc_\ell^\vee\right)(g)
\vphi_{\wt{\pi}}(g)
\ud_xg,
\]
which is defined on the right-hand side of \eqref{k-reg-1},
converges absolutely for any $x\in F^\times$. Moreover, for any compact subset $\Fn$ of $F^\times$,
the convergence is uniform when $x$ runs in $\Fn$ and the integral defines a smooth function in $x\in F^\times$.
\end{prp}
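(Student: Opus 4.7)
The strategy is essentially contained in the discussion preceding the statement; I would formalize it as follows. First, I would verify that for each $\ell\geq 1$, the convolution $\Phi_{\GJ}*\Fc_\ell^\vee$ lies in the Schwartz space $\CS_{\std}(\RG_n(F))$. By the identity \eqref{FOFT},
\[
(\Phi_{\GJ}*\Fc_\ell^\vee)(g) = |\det g|_F^{\frac{n}{2}}\cdot\CF_\psi\bigl(|\det(\cdot)|^{-\frac{n}{2}}\Fc_\ell\bigr)(g).
\]
Since $\Fc_\ell\in\CC_c^\infty(\RG_n(F))$ is supported in a fixed compact neighborhood of $\RI_n$, the function $|\det(\cdot)|^{-\frac{n}{2}}\Fc_\ell$ extends by zero to an element of $\CC_c^\infty(\RM_n(F))\subset\CS(\RM_n(F))$, and so its classical Fourier transform is Schwartz on $\RM_n(F)$. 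Therefore $\Phi_{\GJ}*\Fc_\ell^\vee$ lies in $\CS_{\std}(\RG_n(F))$.

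Second, I would parametrize the fiber $\RG_n(F)_x$ via the section $t_1(x):=\diag(x,1,\ldots,1)$, rewriting the integral as
\[
k_{\pi,\psi,\ell}(x) = \int_{\SL_n(F)}(\Phi_{\GJ}*\Fc_\ell^\vee)(t_1(x)g)\,\vphi_{\wt{\pi}}(t_1(x)g)\,\ud_1 g.
\]
Absolute convergence would then follow from two observations: (i) the restriction of any element of $\CS_{\std}(\RG_n(F))$ to the closed submanifold $\RG_n(F)_x$ is a Schwartz function in the sense of Aizenbud--Gourevitch \cite{AG08}, and (ii) the matrix coefficient $\vphi_{\wt{\pi}}$ of the Casselman--Wallach representation $\pi$ is of moderate growth on $\RG_n(F)$. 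Their product on $\RG_n(F)_x\cong\SL_n(F)$ is therefore Schwartz, hence integrable against the $\SL_n(F)$-invariant measure $\ud_1 g$.

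For uniform convergence on a compact subset $\Fn\subset F^\times$, I would exploit the fact that $\{t_1(x)\mid x\in\Fn\}$ is a compact subset of $\GL_n(F)$: this uniformly bounds the relevant Schwartz seminorms of $g\mapsto(\Phi_{\GJ}*\Fc_\ell^\vee)(t_1(x)g)$ and the moderate growth bounds for $g\mapsto\vphi_{\wt{\pi}}(t_1(x)g)$ simultaneously in $x\in\Fn$. Smoothness of $k_{\pi,\psi,\ell}$ on $F^\times$ would then follow by differentiating under the integral sign; derivatives in $x$ can be written via the chain rule as certain (left-invariant) differential operators applied to the integrand along the $t_1$-direction, and the Schwartz-times-moderate-growth structure is preserved under such differentiation, so the dominated convergence argument goes through uniformly on compact sets. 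The main technical obstacle I anticipate is the careful bookkeeping that the Aizenbud--Gourevitch Schwartz norms of the restriction behave continuously under the translation $g\mapsto t_1(x)g$ as $x$ varies over $\Fn$; this is a matter of invoking the functorial properties of Schwartz functions on Nash manifolds rather than a substantively new estimate. The real analytic substance of the kernel function lies not in this proposition but in the subsequent regularization as $\ell\to\infty$.
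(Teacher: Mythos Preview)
Your proposal is correct and follows essentially the same approach as the paper: the paper's proof is precisely the discussion preceding the statement of the proposition, using \eqref{FOFT} to place $\Phi_{\GJ}*\Fc_\ell^\vee$ in $\CS_{\std}(\RG_n(F))$, invoking \cite{AG08} for the restriction to the fiber, the moderate growth of $\vphi_{\wt{\pi}}$, and the section $t_1(x)=\diag(x,1,\ldots,1)$ for uniform convergence and smoothness. You supply slightly more detail on differentiation under the integral sign than the paper does, and you are right that the analytic substance lies in the subsequent limit as $\ell\to\infty$.
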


For each $\ell\geq 1$, from \eqref{convolution} with $\xi=\Fc_\ell$, the $\ell$-th part of the kernel function can be written as
\begin{align}\label{ell-k}
k_{\pi,\psi,\ell}(x)
=
|x|^{\frac{n}{2}}_F
\int_{\det g=x}
\left(
\psi(\tr(\cdot))*(|\det(\cdot)|_F^{\frac{n}{2}}\Fc_\ell)^\vee
\right)(g)
\vphi_{\wt{\pi}}(g)
\ud_xg.
\end{align}
By Proposition \ref{prp:k-ell-ar}, the $\ell$-th part of the kernel function $k_{\pi,\psi,\ell}(x)$ is
smooth on $F^\times$. Moreover, we have

\begin{prp}\label{prp:MT-k-ell}
For any given $\pi\in\Pi_F(n)$, there exists a real number $s_\pi$ such that the Mellin transform of the $\ell$-th part of the kernel function $k_{\pi,\psi,\ell}(x)$,
$$
\int_{F^\times}
k_{\pi,\psi,\ell}(x)\chi_s(x)^{-1}\ud^\times x,
$$
converges absolutely for $\Re(s)<s_\pi$, and admits a meromorphic continuation to $s\in \BC$, for any $\ell\geq 1$. Here $\chi_s(x)=\chi(x)|x|^s$ for any unitary character $\chi\in\FX(F^\times)$.
\end{prp}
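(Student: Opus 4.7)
The plan is to interpret the Mellin transform as a renormalized Godement-Jacquet zeta integral and then apply the local functional equation to reduce it to a zeta integral against a compactly supported test function. The latter is manifestly entire, and the local $\gam$-function is meromorphic on $\BC$, which together yield the desired meromorphic continuation.

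Set $\xi_\ell := \Phi_{\GJ}*\Fc_\ell^\vee$. By the identity in \eqref{k-reg-3}, one has $\xi_\ell(g) = |\det g|_F^{n/2}\cdot \CF_\psi(|\det(\cdot)|_F^{-n/2}\Fc_\ell)(g)$, and since $|\det(\cdot)|_F^{-n/2}\Fc_\ell$ extends by zero to an element of $\CC_c^\infty(\RM_n(F))$, its classical Fourier transform is Schwartz on $\RM_n(F)$. Hence $\xi_\ell\in\CS_{\std}(\RG_n(F))$, and in particular there is a Schwartz-type pointwise bound
\[
|\xi_\ell(g)| \le C_N\,|\det g|_F^{n/2}\,(1+\|g\|)^{-N}
\]
for every $N\ge 0$, where $\|\cdot\|$ is any fixed norm on $\RM_n(F)$.

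The first step is to justify Fubini via the fiber integration formula $\ud g = \ud^\times x\cdot \ud_x g$ on $\RG_n(F)=\bigsqcup_{x\in F^\times}\RG_n(F)_x$. Combining the bound on $\xi_\ell$ with the moderate growth of the Casselman-Wallach matrix coefficient $\vphi_{\wt\pi}$, the standard Godement-Jacquet convergence theorem recalled in Section \ref{sec-SSFO} (following \cite{GJ72, JL21}) shows that
\[
\int_{\RG_n(F)} |\xi_\ell(g)\vphi_{\wt\pi}(g)|\,|\det g|_F^{-\Re(s)}\,\ud g < \infty
\]
for $\Re(s)$ less than some real constant $s_\pi$ depending only on $\pi$. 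By Fubini-Tonelli, the Mellin transform converges absolutely in this half-plane and equals the Godement-Jacquet zeta integral:
\[
\int_{F^\times} k_{\pi,\psi,\ell}(x)\chi_s(x)^{-1}\,\ud^\times x = \CZ(\tfrac{1}{2}-s,\xi_\ell,\vphi_{\wt\pi},\chi^{-1}).
\]

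The second step is to invoke the local functional equation \eqref{GJFE} applied to $\xi_\ell = \CF_{\GJ}(\Fc_\ell)$. Choosing $\vphi_\pi\in\CC(\pi)$ with $\vphi_\pi^\vee = \vphi_{\wt\pi}$, the functional equation at the point $s+\tfrac{1}{2}$ yields
\[
\CZ(\tfrac{1}{2}-s,\CF_{\GJ}(\Fc_\ell),\vphi_\pi^\vee,\chi^{-1}) = \gam(s+\tfrac{1}{2},\pi\times\chi,\psi)\cdot \CZ(s+\tfrac{1}{2},\Fc_\ell,\vphi_\pi,\chi)
\]
as meromorphic functions in $s$. Since $\Fc_\ell\in\CC_c^\infty(\RG_n(F))$, the right-hand zeta integral is taken over the compact support of $\Fc_\ell$, on which $|\det g|_F$ is bounded above and below away from zero; hence this factor is entire in $s$. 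Combined with the meromorphy of the local Langlands $\gam$-function $\gam(s+\tfrac{1}{2},\pi\times\chi,\psi)$, this gives the desired meromorphic continuation to all $s\in\BC$.

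The main technical point is the Fubini justification, i.e., the absolute convergence of the double integral for $\Re(s)<s_\pi$. This reduces cleanly to the standard Godement-Jacquet convergence result once the Schwartz-type bound on $\xi_\ell$ is in place, so no new ingredient beyond the theory recalled in Section \ref{sec-SSFO} is required.
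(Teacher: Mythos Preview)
Your proof is correct and follows essentially the same structure as the paper: identify the Mellin transform with the Godement--Jacquet zeta integral $\CZ(\tfrac{1}{2}-s,\CF_{\GJ}(\Fc_\ell),\vphi_\pi^\vee,\chi^{-1})$, then apply the functional equation \eqref{GJFE} to rewrite it as $\gam(s+\tfrac{1}{2},\pi\times\chi,\psi)\cdot\CZ(s+\tfrac{1}{2},\Fc_\ell,\vphi_\pi,\chi)$, from which meromorphic continuation is immediate since the second factor is entire.

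The one place where you diverge from the paper is in justifying that the half-plane of absolute convergence $\Re(s)<s_\pi$ can be chosen \emph{uniformly} in $\ell$ and in the unitary character $\chi$. You invoke the standard Godement--Jacquet convergence estimate directly, relying on the Schwartz bound for $\xi_\ell$ and the moderate growth of $\vphi_{\wt\pi}$; this is legitimate, since the half-plane in that estimate depends only on the growth exponent of the matrix coefficient, hence only on $\pi$. The paper instead argues more indirectly: it first observes that (up to an unramified twist) $k_{\pi,\psi,\ell}$ lies in $\CS_{\wt\pi}(F^\times)\subset\CM^{-1}(\CL_{\wt\pi})$, then uses the structure of $\CL_{\wt\pi}$ from \cite{JL21} to reduce the question to locating the poles after meromorphic continuation, and finally reads those poles off from the explicit Archimedean $L$-function $L(\tfrac{1}{2}-s,\wt\pi\times\chi^{-1})$ via \cite{Kn94}. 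Your route is shorter; the paper's route has the advantage of making the pole set explicit and tying it to the $L$-function, which is used again later in the proof of Theorem~\ref{thm:KF-ar}.
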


\begin{proof}
For each integer $\ell\geq 1$ and any unitary character $\chi$ of $F^\times$, we consider the Mellin transform of the $\ell$-th part of the kernal function, $k_{\pi,\psi,\ell}(x)$:
\begin{align}\label{2-1}
\int_{F^\times}
k_{\pi,\psi,\ell}(x)
\chi_s(x)^{-1}\ud^\times x
=
\int_{F^\times}
\int_{\det g=x}
\left(\Phi_{\GJ}*\Fc_\ell^\vee\right)(g)
\vphi_{\wt{\pi}}(g)
\ud_xg\
\chi_s(x)^{-1}\ud^\times x,
\end{align}
which can be written as
\begin{align}\label{2-2}
\int_{\RG_n(F)}
\CF_{\GJ}(\Fc_\ell)(g)
\vphi_{\wt{\pi}}(g)
\chi(\det g)^{-1}|\det g|^{-s}\ud g
=
\CZ(1-(s+\frac{1}{2}),\CF_{\GJ}(\Fc_\ell),\vphi_\pi^\vee,\chi^{-1}).  		
\end{align}
Since $\CC_c^\infty(\RG_n(F))$ is a subspace of $\CS_{\std}(\RG_n(F))$, we must have that
$\CF_{\GJ}(\Fc_\ell)\in\CS_{\std}(\RG_n(F))$. This implies that the integral in \eqref{2-2}
converges absolutely for $\Re(s)$ sufficiently small, and up to unramified shift, the $\ell$-th part of the kernel function, $k_{\pi,\psi,\ell}(x)$ belongs to the space $\CS_{\wt{\pi}}(F^\times)$.
Hence up to unramified shift, $k_{\pi,\psi,\ell}(x)$ belongs to the space $\CM^{-1}(\CL_{\wt{\pi}})$,
according to \cite[Corollary 3.8]{JL21}.

We are going to show that the integral in \eqref{2-2} converges absolutely for $\Re(s)<s_\pi$ with
some constant $s_\pi\in \BR$, which is independent of $\ell$ and $\chi$, but depends only on $\pi$.
By \cite[Theorem 2.3]{JL21} and the structure of the space $\CL_{\wt{\pi}}(\FX(F^\times))\subset\CZ(\FX(F^\times))$ in \cite[Definition 2.2]{JL21},
it suffices to show that after meromorphic continuation, as a meromorphic function in $s\in \BC$, the poles of the integral in \eqref{2-2} are contained in the region
$\{s\in \BC\mid \Re(s)>s_\pi\}$ for the constant $s_\pi\in \BR$. Note that the Mellin transform in \eqref{2-1} uses $\chi_s^{-1}$.

By the functional equation of Godement-Jacquet in \eqref{GJFE}, which comes from
\cite[Proposition 2.7]{JL21}, after meromorphic continuation, the integral in \eqref{2-2} is equal to
\begin{align}\label{2-3}
\gam(s+\frac{1}{2},\pi\times \chi,\psi)
\cdot
\CZ(s+\frac{1}{2},\Fc_\ell,\vphi_\pi,\chi).
\end{align}
Since $\Fc_\ell(g)\in\CC_c^\infty(\RG_n(F))$, the integral in \eqref{2-3} converges absolutely for all
$s\in \BC$ and hence defines a holomorphic function in $s\in \BC$.  Thus the set of the poles of the product in \eqref{2-3} is contained in the set of the poles of the gamma function
$\gam(s+\frac{1}{2},\pi\times \chi,\psi)$.
As meromorphic functions in $s$, we have
$$
\gam(s+\frac{1}{2},\pi\times \chi,\psi) =
\veps(s+\frac{1}{2},\pi\times \chi,\psi)
\cdot
\frac{L(\frac{1}{2}-s,\wt{\pi}\times \chi^{-1},\psi)}{
L(s+\frac{1}{2},\pi\times \chi,\psi)
}.
$$
The set of the poles of the integral in \eqref{2-2} is contained in the set of the poles of
the local $L$-function
$$
L(\frac{1}{2}-s,\wt{\pi}\times \chi^{-1},\psi).
$$
Finally, since $\chi$ is assumed to be unitary, by the explicit formulas of the Archimedean local $L$-functions (see \cite[(3.6),(4.6)]{Kn94} for instance),
there exists a constant $s_\pi$ depending on $\pi$ only, such that the poles of $L(1-s,\wt{\pi}\times \chi^{-1},\psi)$ are contained in the half-plane
$$
\{s\in \BC\mid \Re(s)>s_\pi\}.
$$
It follows that the integral in \eqref{2-2} is absolutely convergent for $\Re(s)<s_\pi$ with $s_\pi$ depending only on $\pi$, neither on $\chi$ nor on $\ell\geq 1$.

Finally, it is clear that from \eqref{2-2}, the assertion on the meromorphic continuation follows from that of the zeta integral on the right-hand side of \eqref{2-2} by \cite[Theorem 3.4]{JL21}.
\end{proof}

\begin{prp}\label{prp:k-ar}
For any $\pi\in\Pi_F(n)$, the limit in \eqref{k-reg-1} defining the kernel function $k_{\pi,\psi}(x)$ converges absolutely for any $x\in F^\times$. Moreover, for any compact subset $\Fn$ of $F^\times$, the convergence of the limit is uniform when $x$ runs in $\Fn$.
\end{prp}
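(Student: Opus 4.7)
I plan to show that the sequence $\{k_{\pi,\psi,\ell}(x)\}_{\ell\geq 1}$ is uniformly Cauchy on any compact subset $\Fn\subset F^\times$, from which the absolute convergence of the limit \eqref{k-reg-1} and its uniformity will follow. For indices $\ell > m$, the difference
\[
D_{\ell,m}(x) := k_{\pi,\psi,\ell}(x) - k_{\pi,\psi,m}(x) = \int_{\det g = x} \bigl(\Phi_{\GJ} * (\Fc_\ell - \Fc_m)^\vee\bigr)(g)\,\vphi_{\wt{\pi}}(g)\,\ud_x g
\]
equals, by the identity \eqref{k-reg-3}, the quantity $|x|_F^{n/2}\int_{\det g = x}\CF_\psi(h_{\ell,m})(g)\,\vphi_{\wt{\pi}}(g)\,\ud_x g$, where $h_{\ell,m} := |\det(\cdot)|_F^{-n/2}(\Fc_\ell - \Fc_m) \in \CC_c^\infty(\RG_n(F))$. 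Each such integral is absolutely convergent by Proposition \ref{prp:k-ell-ar}, but the challenge is to extract decay as $\ell, m \to \infty$: $\CF_\psi(h_{\ell,m})$ is Schwartz on $\RM_n(F)$ yet its Schwartz seminorms need not vanish, while $\vphi_{\wt{\pi}}$ is only of moderate growth on the fiber.

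The plan is to exploit the infinitesimal character of $\pi$. Because $\pi \in \Pi_F(n)$ is irreducible Casselman--Wallach, the matrix coefficient $\vphi_{\wt{\pi}}$ is $Z(U(\Fg))$-finite, where $\Fg = \Lie\RG_n(F)$; in particular there exist a positive integer $N_0$ and a scalar $\lambda$ so that $(\Delta - \lambda)^{N_0}\vphi_{\wt{\pi}} = 0$ for the Casimir $\Delta \in Z(U(\Fg))$. Invoking the elliptic regularity of $\Delta$ on $\RG_n(F)$ from \cite[Lemma 3.7]{BeK14}, I will decompose $\Delta$ relative to the principal $\SL_n(F)$-bundle $\det\colon \RG_n(F) \to F^\times$ as $\Delta = \Delta_1 + \Delta_Z$, where $\Delta_1$ is the Casimir of $\SL_n(F)$ acting along the fibers and $\Delta_Z$ acts along the central $F^\times$-direction. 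On each fiber $\{\det g = x\}$, the operator $\Delta_Z$ reduces to multiplication by a scalar $c(x)$, so that $\vphi_{\wt{\pi}}|_{\{\det g = x\}}$ satisfies $(\Delta_1 - (\lambda - c(x)))^{N_0}\vphi_{\wt{\pi}}|_{\{\det g = x\}} = 0$.

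This fiberwise eigenequation then enables integration by parts along the fiber. After $kN_0$ iterations one obtains
\[
\int_{\det g = x}\CF_\psi(h_{\ell,m})\,\vphi_{\wt{\pi}}\,\ud_x g = (\lambda - c(x))^{-kN_0}\int_{\det g = x}\bigl(\Delta_1^{kN_0}\CF_\psi(h_{\ell,m})\bigr)\,\vphi_{\wt{\pi}}\,\ud_x g,
\]
where $\Delta_1^{kN_0}\CF_\psi(h_{\ell,m})$ is Schwartz on $\RM_n(F)$ carrying additional polynomial decay of order $2kN_0$ produced by the action of $\Delta_1$ on the oscillating factor coming from $\psi(\tr(\cdot))$. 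Choosing $k$ large compared to the moderate-growth exponent of $\vphi_{\wt{\pi}}$ makes the integrand absolutely integrable and yields, uniformly in $x \in \Fn$, a bound on $|D_{\ell,m}(x)|$ by a continuous seminorm of $h_{\ell,m}$ in $\CC_c^\infty(\RG_n(F))$; this seminorm tends to zero as $\ell, m \to \infty$ by the construction of the sequence $\{\Fc_\ell\}$.

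The main technical obstacle will be rigorously implementing the decomposition $\Delta = \Delta_1 + \Delta_Z$ relative to the fibration $\det$ and justifying the fiberwise integration by parts. A subsidiary difficulty is to handle the values of $x$ for which $\lambda - c(x)$ may vanish: these can be treated either by shifting to a different central element of $Z(U(\Fg))$ still annihilating $\vphi_{\wt{\pi}}$ via its infinitesimal character, or by replacing the formal inverse of $\Delta_1 - (\lambda - c(x))$ by a parametrix furnished by \cite[Lemma 3.7]{BeK14}. Throughout, care must be taken to ensure that all constants in the estimates depend only on $\Fn$ and $\pi$, so that the bounds are uniform for $x$ in the compact set $\Fn$.
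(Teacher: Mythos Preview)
Your proposal contains a genuine gap at its final step. You conclude by asserting that the bound on $|D_{\ell,m}(x)|$ is controlled by ``a continuous seminorm of $h_{\ell,m}$ in $\CC_c^\infty(\RG_n(F))$; this seminorm tends to zero as $\ell, m \to \infty$.'' This is false. The sequence $\{\Fc_\ell\}$ is an approximate identity converging to $\delta_{\RI_n}$; such sequences do \emph{not} converge in any continuous seminorm on $\CC_c^\infty$ --- on the contrary, their sup-norms, $L^1$-norms of derivatives, and all standard seminorms either stay bounded away from zero or blow up. Consequently $h_{\ell,m}=|\det(\cdot)|_F^{-n/2}(\Fc_\ell-\Fc_m)$ does not tend to zero in any such seminorm, and your Cauchy argument does not close.

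A second problem is the claim that $\Delta_1^{kN_0}\CF_\psi(h_{\ell,m})$ carries ``additional polynomial decay of order $2kN_0$ produced by the action of $\Delta_1$ on the oscillating factor.'' There is no explicit oscillating factor in $\CF_\psi(h_{\ell,m})$; it is simply a Schwartz function. The operator $\Delta_1$ is built from invariant vector fields with polynomial coefficients, and under the Fourier transform it corresponds to a differential operator (not a multiplication) on $h_{\ell,m}$; applying it to $\CF_\psi(h_{\ell,m})$ yields another Schwartz function with no gain in decay. Thus the integration by parts, while formally valid (and your decomposition $\Delta=\Delta_1+\Delta_Z$ is essentially correct, though $c$ is a constant determined by the central character of $\wt\pi$, not a function $c(x)$), produces no improvement.

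The paper avoids the Cauchy route entirely. It also invokes \cite[Lemma~3.7]{BeK14}, but uses the parametrix identity $\Delta^m*f_1+f_2=\delta_{\RI_n}$ to rewrite $\vphi_{\wt\pi}=c(m,\wt\pi)\,L(f_1)(\vphi_{\wt\pi})+L(f_2)(\vphi_{\wt\pi})$ as a convolution by compactly supported $f_i$. After unfolding and switching orders of integration, the $\ud u$-integration over $\Fc_\ell(u)|\det u|_F^{n/2}$ becomes the \emph{outermost} integral, pairing $\Fc_\ell$ against a function shown (via Lemma~\ref{lem:auxiliarycov}) to be continuous near $\RI_n$. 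The limit $\ell\to\infty$ then exists simply because $\Fc_\ell\to\delta_{\RI_n}$ weakly against continuous functions. The essential maneuver --- rearranging so that the delta-sequence acts on something continuous rather than trying to bound it in norm --- is what your proposal is missing.
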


\begin{proof}
From \eqref{kernel-ar}, \eqref{k-reg-1} and \eqref{k-reg-2}, we have
\begin{align}\label{k-ar-1}
k_{\pi,\psi}(x)
&=
\int^\reg_{\det g=x}
\Phi_{\GJ}(g)\vphi_{\wt{\pi}}(g)\ud_x g
=
\lim_{\ell\to \infty}
\int_{\det g=x}
\left(\Phi_{\GJ}*\Fc_\ell^\vee\right)(g)
\vphi_{\wt{\pi}}(g)
\ud_xg\nonumber\\
&=|x|^{\frac{n}{2}}_F
\lim_{\ell\to \infty}
\int_{\det g=x}
\left(\psi(\tr(\cdot))*(|\det(\cdot)|_F^{\frac{n}{2}}\Fc_\ell)^\vee\right)(g)
\vphi_{\wt{\pi}}(g)
\ud_xg.
\end{align}

We are going to apply the elliptic regularity the Casimir operator $\Del$ on $\RG_n(F)$ as in \cite[Lemma~3.7]{BeK14} to the matrix coefficient $\vphi_{\wt{\pi}}(g)$ in \eqref{k-ar-1}
as the first step in the proof of the proposition. The Lie algebra of $\RG_n$ is $\RM_n$. We may simply identify $\RM_n$ with its dual and take
\[
\Del=\sum_{i,j=1}^nX_{i,j}^2
\]
as in \cite[Section 3.2.1]{BeK14}, where $\{X_{i,j}\mid i,j=1,2\cdots,n\}$ is a basis of $\RM_n$. It is well known that $\Del$ is well defined independent of the choice of a basis of $\RM_n$
and belongs to the center of the universal enveloping algebra of $\RG_n$.
By the elliptic regularity as in \cite[Lemma~3.7]{BeK14}, for any integer $m$ with $2m>\dim \RG_n$,
there exist functions $f_1$ and $f_2$ with $f_1\in \CC^{2m-\dim \RG_n-1}_c(\RG_n(F))$ and $f_2\in \CC^\infty_c(\RG_n(F))$, respectively, such that
\begin{align}\label{Del}
\Del^m*f_1+f_2 = \del_{\RI_n}.
\end{align}
Let $L$ be the left translation action of $\RG_n(F)$ on a space of functions on $\RG_n(F)$. From \eqref{Del}, we have
\[
L(\Del^m*f_1+f_2)(f)(g)=L(\del_{\RI_n})(f)(g)=f(g)
\]
for any suitable function $f$ on $\RG_n(F)$.

Following \eqref{Del} and the definition appearing in \cite[\S 3.3]{BeK14}, or \cite[Corollary~3.8]{BeK14}, we have that
\[
\vphi_{\wt{\pi}}=L(\Del^m*f_1+f_2)(\vphi_{\wt{\pi}})
=L(f_1)(\Del^m\cdot \vphi_{\wt{\pi}})+L(f_2)(\vphi_{\wt{\pi}})
=c(m,\wt{\pi})\cdot L(f_1)(\vphi_{\wt{\pi}})
+L(f_2)(\vphi_{\wt{\pi}})
\]
where $c(m,\wt{\pi})\in \BC$ is a constant depending on $m$ and $\wt{\pi}$. Here we use the fact that $\Del$ acts on $\vphi_{\wt{\pi}}$ via scalar.
After substituting
\[
\vphi_{\wt{\pi}}\mapsto L(\Del^m*f_1+f_2)(\vphi_{\wt{\pi}}),
\]
in the integral in \eqref{k-ar-1},
up to scalar, it suffices to investigate the behavior of the following integrals when $\ell\to \infty$,
\begin{align}\label{kernel-ar-3}
\int_{\det g=x}
\bigg(
\psi(\tr(\cdot))
*
(|\det(\cdot)|^{\frac{n}{2}}_F\Fc_\ell)^\vee
\bigg)(g)
L(f_i)(\vphi_{\wt{\pi}})(g)\ud_xg, \quad i=1,2
\end{align}
where $f_1\in \CC^{2m-\dim \RG_n-1}_c(\RG_n(F))$ and $f_2\in \CC^\infty_c(\RG_n(F))$.

Following the same argument as in the paragraph after \eqref{k-reg-3}, for each individual $\ell\geq 1$, the integral \eqref{kernel-ar-3} is absolutely convergent. Expanding the definition of $L(f_i)(\vphi_{\wt{\pi}})$, we get
\[
L(f_i)(\vphi_{\wt{\pi}})(g)
=
\int_{h\in \RG_n(F)}
f_i(h)\vphi_{\wt{\pi}}(h^{-1}g)\ud h
=
\int_{h\in \RG_n(F)}
f_i(gh)\vphi_{\wt{\pi}}(h^{-1})\ud h.
\]
We write the integrals in \eqref{kernel-ar-3} as
\begin{align}\label{ar-0}
\int_{\det g=x}
\bigg(
\psi(\tr(\cdot))
*
(|\det(\cdot)|^{\frac{n}{2}}_F\Fc_\ell)^\vee
\bigg)(g)
\int_{h\in \RG_n(F)}
f_i(gh)\vphi_{\wt{\pi}}(h^{-1})\ud h\ud_xg, \quad i=1,2.
\end{align}
By absolute convergence we can switch the integration order in \eqref{ar-0} and write \eqref{ar-0} as
\begin{align}\label{ar-1}
\int_{h\in \RG_n(F)}
\vphi_{\wt{\pi}}(h^{-1})
\ud h
\int_{\det g=x}
\bigg(
\psi(\tr(\cdot))
*
(|\det(\cdot)|^{\frac{n}{2}}_F\Fc_\ell)^\vee
\bigg)(g)
f_i(gh)\ud_xg, \quad i=1,2.
\end{align}

The second step is to show that the limit as $\ell\to\infty$ exists. To this end, we consider the function given by the inner integration in \eqref{ar-1}, which can be written as follows:
\begin{align}\label{ar-2}
h\in \RG_n(F)\mapsto
&\int_{\det g=x}
\bigg(
\psi(\tr(\cdot))
*
(|\det(\cdot)|^{\frac{n}{2}}_F\Fc_\ell)^\vee
\bigg)(g)
f_i(gh)\ud_xg \nonumber
\\
=&
\int_{\det g=x}
\int_{u\in \RG_n(F)}
\psi(\tr(gu))
|\det u|^{\frac{n}{2}}_F
\Fc_\ell(u)\ud u
f_i(gh)\ud_xg.
\end{align}
Since both $\Fc_\ell$ and $f_i$ are compactly supported, one can switch the integration order, and obtain that the function in \eqref{ar-2} is equal to the following function
\begin{align}
h\in \RG_n(F)
\mapsto
\int_{u\in \RG_n(F)}
\Fc_\ell(u)|\det u|^{\frac{n}{2}}_F\ud u
\int_{\det g=x}
\psi(\tr(gu))f_i(gh)\ud_xg.
\end{align}
Hence \eqref{ar-1} becomes
\begin{align}
\int_{h\in \RG_n(F)}
\vphi_{\wt{\pi}}(h^{-1})\ud h
\int_{u\in \RG_n(F)}
\Fc_\ell(u)|\det u|^{\frac{n}{2}}_F\ud u
\int_{\det g=x}
\psi(\tr(gu))f_i(gh)\ud_xg.
\end{align}
By absolute convergence, after switching the integration order, \eqref{ar-1} can be re-written as
\begin{align*}
\int_{u\in \RG_n(F)}
\Fc_\ell(u)|\det u|^{\frac{n}{2}}_F\ud u
\int_{h\in \RG_n(F)}
\vphi_{\wt{\pi}}(h^{-1})\ud h
\int_{\det g=x}
\psi(\tr(gu))f_i(gh)\ud_xg.
\end{align*}
Notice that $\{\Fc_\ell\cdot |\det(\cdot)|^{\frac{n}{2}}_F\}$ is a delta sequence consisting of smooth compactly supported functions tending to the delta mass supported at identity. By the discussion right after \eqref{Fc-I}, in order to show that as $\ell\to \infty$ the limit exists, it suffices to show that the following function
$$
u\in \RG_n(F)
\mapsto
\int_{h\in \RG_n(F)}
\vphi_{\wt{\pi}}(h^{-1})\ud h
\int_{\det g=x}
\psi(\tr(gu))f_i(gh)\ud_xg
$$
is a continuous function in a compact neighborhood of identity. After changing variable, the above function becomes
\begin{align}\label{ar-2-0}
u\in \RG_n(F)
\mapsto
\int_{h\in \RG_n(F)}
\vphi_{\wt{\pi}}(h^{-1}u^{-1})\ud h
\int_{\det g=x\det u}
\psi(\tr(g))f_i(gh)\ud_xg,
\end{align}
which we are going to show to be absolutely convergent whenever $u$ lies in a fixed compact neighborhood of identity. By taking absolute value, we consider
$$
\int_{h\in \RG_n(F)}
|\vphi_{\wt{\pi}}(h^{-1}u^{-1})|\ud h
\cdot
\bigg|
\int_{\det g=x\det u}
\psi(\tr(g))f_i(gh)\ud_xg
\bigg|
$$
as a function in $u$ near the identity $\RI_n$.
Since the matrix coefficients $\vphi_{\wt{\pi}}$ are smooth on $\RG_n(F)$, up to a constant that depends on the compact neighborhood containing $u$, it suffices to estimate the following integral
\begin{equation}\label{eq:domin:archi}
\int_{h\in \RG_n(F)}
|\vphi_{\wt{\pi}}(h^{-1})|\ud h
\cdot
\bigg|
\int_{\det g=x}
\psi(\tr(g))f_i(gh)\ud_xg
\bigg|
\end{equation}
for $x$ lying in a compact neighborhood of $1\in \RG_1(F)$.

We first estimate the function from the inner integration in \eqref{eq:domin:archi}, which is given by
\begin{equation}\label{ar-2-1}
h\in \RG_n(F)\mapsto
\int_{\det g=x}
\psi(\tr(g))f_i(gh)\ud_xg,\quad i=1,2.
\end{equation}
With the chosen section $t_1:\RG_1(F)\to \RG_n(F)$ sending $x\mapsto t_1(x)=\diag(x,1,...,1)$, the function \eqref{ar-2-1} can be re-written as
\begin{align}\label{ar-2-2}
h\in \RG_n(F)\mapsto
\int_{\det g=1}
\psi(\tr(t_1(x)g))f_i(t_1(x)gh)\ud_1g,\quad i=1,2.
\end{align}
In particular, via $t_1$, $\RG_n(F) \simeq \RG_1(F)\rtimes \SL_n(F)$. Hence as a function on a smooth manifold, we may view \eqref{ar-2-2} as a function in variable $(y,h_1)\in \RG_1(F)\rtimes \SL_n(F)$, i.e.
\begin{align}\label{ar-2-3}
(y,h_1)\in \RG_1(F)\rtimes \SL_n(F)
\mapsto
\int_{\det g=1}
\psi(\tr(t_1(x)g))f_i(t_1(x)gt_1(y)h_1)\ud_1g,\quad i=1,2.
\end{align}
Changing variable $g\mapsto t_1(y)gt_1(y)^{-1}$ and using the fact that $\psi(\tr(\cdot))$ is adjoint invariant, the function becomes
\begin{align}\label{ar-2-4}
(y,h_1)\in \RG_1(F)\rtimes \SL_n(F)
\mapsto
&\int_{\det g=1}
\psi(\tr(t_1(x)g))f_i(t_1(xy)gh_1)\ud_1g \nonumber
\\
=&
\int_{\det g=1}
\psi(\tr(t_1(x)g))\wt{f}_i(xy,gh_1)\ud_1g,\quad i=1,2
\end{align}
where $\wt{f}_i$ is the pull-back of $f_i$ under the identification $\RG_n(F) = \RG_1(F)\rtimes \SL_n(F)$. In particular, as long as $x$ is fixed (or lies inside a fixed compact subset of $\RG_1(F)$), the function \eqref{ar-2-4} is compactly supported in $y$-variable uniformly for any $h_1\in \SL_n(F)$.

Now we consider the matrix coefficient $\vphi_{\wt{\pi}}$. Since $\vphi_{\wt{\pi}}$ is of moderate growth, there exists a positive integer $n_{\wt{\pi}}$ depending on $\wt{\pi}$ only, such that up to constant,
\begin{align}\label{npi}
|\vphi_{\wt{\pi}}(h)|
\leq
\begin{cases}
(1+\tr({}^thh)+\tr({}^th^{-1}h^{-1}))^{n_{\wt{\pi}}}, & F=\BR,\\
(1+\tr({}^t\wb{h}h)+\tr({}^t\wb{h}^{-1}h^{-1}))^{n_{\wt{\pi}}}, & F=\BC.
\end{cases}
\end{align}
Write $h = t_1(y)\cdot h_1$, then
$
|\vphi_{\wt{\pi}}(h)|
$
is essentially bounded by a finite linear combination of the following terms
$$
y^{p_{\wt{\pi}}}\cdot P_{\wt{\pi}}(h_1)
$$
where $p_{\wt{\pi}}\in \BZ$ depends on $\wt{\pi}$, and $P_{\wt{\pi}}(h_1)$ is the restriction of a polynomial on $\RM_n(F)$ (viewed as a real vector space) depending on $\wt{\pi}$. It follows that in order to estimate \eqref{eq:domin:archi}, up to a finite linear combination, it suffices to estimate the following integral
$$
\int_{y\in \RG_1(F)}
y^{-p_{\wt{\pi}}}\ud y
\int_{h_1\in \SL_n(F)}
P_{\wt{\pi}}(h_1^{-1})
\ud_1h_1
\int_{\det g=1}
\psi(\tr(t_1(x)g))\wt{f}_i(xy,gh_1)\ud_1g,\quad i=1,2.
$$
As the integrand in $y$-variable is compactly supported whenever $x$ lies in a fixed compact subset of $\RG_1(F)$, it suffices to show the convergence of the following integral
\begin{align}\label{ar-4}
&\int_{h_1\in \SL_n(F)}
P_{\wt{\pi}}(h_1^{-1})
\ud_1h_1
\int_{\det g=1}
\psi(\tr(t_1(x)g))\wt{f}_i(xy,gh_1^{-1})\ud_1g
\nonumber\\
=&
\int_{h_1\in \SL_n(F)}
P_{\wt{\pi}}(h_1^{-1})
\ud_1h_1
\int_{\det g=1}
\psi(\tr(t_1(x)gh_1))\wt{f}_i(xy,g)\ud_1g,\quad i=1,2.
\end{align}
Here $f_1\in \CC^{2m-\dim \RG_n-1}_c(\RG_n(F))$, $f_2\in \CC^\infty_c(\RG_n(F))$ and $m$ can be arbitrary large. For $h_1\in \SL_n(F)$, $h^{-1}_1 = \mathrm{adj}(h_1)$ where $\mathrm{adj}(h_1)$ is the adjacent matrix of $h_1$. This implies that $P_{\wt{\pi}}(h^{-1}_1)$, when restricted to $\SL_n(F)$, can still be represented by a polynomial (depending only on $\wt{\pi}$) in $h_1$-variable. It is clear that the desired convergence of the integral in \eqref{ar-4} will follow from Lemma \ref{lem:auxiliarycov} below, which forms the third step of the proof.

\begin{lem}\label{lem:auxiliarycov}
 Given any positive integer $k$, a function $\wt{f}\in \CC^k_c(\RG_1(F)\rtimes \SL_n(F))$ and $x\in \Fn$ with $\Fn$ any compact subset of $F^\times$ near $1$.
 Then for any polynomial $P$ on $\RM_n(F)$ (viewed as a real vector space) of degree smaller than or equal to $k$, there exists a constant $C_{P,\Fn,f}$ depending only on $P,\Fn$ and $f$, such that the following inequality holds for any $(y,h_1)\in \RG_1(F)\times \SL_n(F)$,
$$
\bigg|
P(h_1)\cdot
\int_{\det g=1}
\psi(\tr(t_1(x)gh_1))\wt{f}(xy,g)\ud_1g
\bigg|
\leq C_{P,\Fn,f}.
$$
\end{lem}

\begin{proof}
By elementary calculation, there exists a linear differential operator $\frac{\partial^{P,x}}{\partial g}$, which has its degree the same as that of $P$ and has constant coefficients where the constants depend on $P$ and linearly on $x$, such that
$$
P(h_1)
\cdot
\psi((\tr(t_1(x)gh_1))) =
\frac{\partial^{P,x}
\psi(\tr(t_1(x)(\cdot )h_1))
}{\partial g}(g).
$$
Therefore the LHS of the above inequality becomes
\begin{align}\label{ar-5}
\int_{\det g=1}
\frac{\partial^{P,x}
\psi(\tr(t_1(x)(\cdot )h_1))
}{\partial g}(g)
\cdot
\wt{f}(xy,g)\ud_1g.
\end{align}
According to \cite[Theorem~18.1.34]{H94} or from basic Euclidean analysis, the adjoint linear differential operator of $\frac{\partial^{P,x}}{\partial g}$ exists.  Note that \cite[Theorem~18.1.34]{H94} ensures the existence of the adjoint as a pseudo differential operator. But locally the adjoint is represented by a linear differential operator by basic Euclidean analysis. Thus, the adjoint is indeed a linear differential operator. Overall, by adjunction, there exists an adjoint differential operator $\frac{{}^t\partial^{P,x}}{\partial g}$ of the same degree as $P$, with smooth coefficients linearly depending on $x\in F^\times$, such that the integral in \eqref{ar-5} is equal to
$$
\int_{\det g=1}
\psi(\tr(t_1(x)gh_1))
\cdot
\frac{{}^t\partial^{P,x}
\wt{f}(xy,\cdot)
}{\partial g}(g)
\ud_1g.
$$
Using the fact that $\wt{f}\in \CC^k_{c}(\RG_1(F)\rtimes \SL_n(F))$, the result follows.
\end{proof}

Finally, by choosing the positive integer $k$ in Lemma \ref{lem:auxiliarycov} sufficiently large comparing to the degree of $P_{\wt{\pi}}(h_1^{-1})$ as a polynomial in $h_1\in\SL_n(F)$,
the integral in \eqref{ar-4} must be absolutely convergent. This finishes the proof of the Proposition.
\end{proof}

\begin{cor}[$\pi$-Kernel Function]\label{cor:kernel}
For any given $\pi\in\Pi_F(n)$,  the kernel function $k_{\pi,\psi}(x)$ defined via the regularized integral  in \eqref{kernel-ar} is a smooth function on $F^\times$.
\end{cor}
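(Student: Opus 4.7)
The plan is to combine Propositions \ref{prp:k-ell-ar} and \ref{prp:k-ar} via a standard bootstrap. Proposition \ref{prp:k-ell-ar} says each approximant $k_{\pi,\psi,\ell}(x)$ is smooth on $F^\times$, while Proposition \ref{prp:k-ar} says $k_{\pi,\psi,\ell}\to k_{\pi,\psi}$ uniformly on compact subsets of $F^\times$. This already yields continuity of $k_{\pi,\psi}$. To upgrade continuity to smoothness, the plan is to show that every $x$-derivative $\partial_x^j k_{\pi,\psi,\ell}$ likewise converges uniformly on compact subsets of $F^\times$ as $\ell\to\infty$; the classical theorem on interchange of limit and differentiation then yields $k_{\pi,\psi}\in\CC^\infty(F^\times)$ together with $\partial_x^j k_{\pi,\psi}=\lim_\ell\partial_x^j k_{\pi,\psi,\ell}$.

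To carry this out, first parametrize $\RG_n(F)_x=t_1(x)\SL_n(F)$ via the section $t_1(x):=\diag(x,1,\ldots,1)$ to write
\[
k_{\pi,\psi,\ell}(x)=\int_{\SL_n(F)}(\Phi_{\GJ}*\Fc_\ell^\vee)(t_1(x)h)\,\vphi_{\wt{\pi}}(t_1(x)h)\ud_1 h,
\]
so that the $x$-dependence enters only through the curve $t_1(x)$ in $\RG_n(F)$. Differentiation in $x$ then amounts to applying a right-invariant vector field on $\RG_n(F)$, scaled by a power of $1/x$, at the point $t_1(x)h$. By the Leibniz rule, $\partial_x^j$ of the integrand expands into a finite linear combination, with coefficients smooth on $F^\times$, of products of the form
\[
(D_1\Phi_{\GJ}*\Fc_\ell^\vee)(t_1(x)h)\cdot (D_2\vphi_{\wt{\pi}})(t_1(x)h),
\]
where $D_1,D_2$ are invariant differential operators on $\RG_n(F)$ of bounded order. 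Crucially, $D_2\vphi_{\wt{\pi}}$ is another matrix coefficient of $\wt{\pi}$ (via the derived action on smooth vectors), hence again smooth and of moderate growth with an exponent depending only on $\wt{\pi}$ and $j$; and $D_1\Phi_{\GJ}(g)=Q(g)\psi(\tr g)|\det g|_F^{n/2}$ for some polynomial $Q$ in the matrix entries.

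Finally, apply the proof of Proposition \ref{prp:k-ar} verbatim to each such summand, with $\vphi_{\wt{\pi}}$ replaced by $D_2\vphi_{\wt{\pi}}$ and $\Phi_{\GJ}$ replaced by $D_1\Phi_{\GJ}$. That proof used only the moderate-growth bound \eqref{npi} for the matrix coefficient and the polynomial-times-$\psi(\tr\cdot)\,|\det\cdot|_F^{n/2}$ structure of $\Phi_{\GJ}$, both of which are preserved under $D_2$ and $D_1$ respectively. The elliptic-regularity decomposition \eqref{Del} via the Casimir $\Del$ and the dominating estimate Lemma \ref{lem:auxiliarycov} carry over without change, provided the integer $m$ in \eqref{Del} is chosen large enough to absorb the increase in the moderate-growth exponent of $D_2\vphi_{\wt{\pi}}$ and in the degree of $Q$. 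The main obstacle is precisely this uniform bookkeeping: tracking the orders of $D_1,D_2$ and the resulting growth estimates so that a single choice of $m$ works for all $\ell$ and for $x$ in a fixed compact subset $\Fn\subset F^\times$. Once this is done, $\partial_x^j k_{\pi,\psi,\ell}(x)$ converges uniformly on compacta as $\ell\to\infty$ for every $j\geq 0$, and smoothness of $k_{\pi,\psi}$ on $F^\times$ follows.
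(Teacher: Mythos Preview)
Your approach is correct and uses the same essential ingredients as the paper: the elliptic regularity decomposition \eqref{Del}, the moderate-growth bound \eqref{npi}, and Lemma \ref{lem:auxiliarycov}. The only difference is organizational. The paper argues directly from the expression \eqref{ar-2-2} obtained in the proof of Proposition \ref{prp:k-ar}: once elliptic regularity has been applied, the limiting value of $k_{\pi,\psi,\ell}(x)$ is an explicit integral in which the $x$-dependence enters through $f_i(t_1(x)gh)$, and since $f_1\in\CC_c^{2m-\dim\RG_n-1}$ with $m$ arbitrary and $f_2\in\CC_c^\infty$, smoothness in $x$ is immediate. You instead differentiate before taking the limit and then re-run the argument of Proposition \ref{prp:k-ar} for each $\partial_x^j k_{\pi,\psi,\ell}$, which amounts to the same thing but with the order of ``apply elliptic regularity'' and ``differentiate in $x$'' reversed. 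Your route is a bit more explicit about why the limit commutes with $\partial_x^j$; the paper's route is shorter because it works directly with the limit expression and avoids re-deriving Proposition \ref{prp:k-ar} for each derivative.
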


\begin{proof}
By Proposition \ref{prp:k-ar}, the limit in \eqref{kernel-ar} and \eqref{k-reg-1}
that defines the kernel function $k_{\pi,\psi}(x)$ is uniform when $x$ runs in any compact
neighborhood $\Fn$ in $F^\times$. Moreover, from \eqref{ar-2-2} and the fact that $f_1\in \CC^{2m-\dim \RG_n-1}_c(\RG_n(F))$, $f_2\in \CC^\infty_c(\RG_n(F))$ where $m$ can be of arbitrarily large, the derivative in $x$-variable can be of arbitrary high order, hence the kernel function $k_{\pi,\psi}(x)$ is smooth
on $F^\times$.
\end{proof}

Here is the main result in this section.

\begin{thm}[Kernel and $\gam$-Function: Archimedean case]\label{thm:KF-ar}
For any given $\pi\in\Pi_F(n)$,  let $s_\pi$ be the real number as in Proposition \ref{prp:MT-k-ell}. The following principal value integral
\begin{equation}\label{eq:archi:regularizedlimit}
\int^\pv_{F^\times}k_{\pi,\psi}(x)\chi_s(x^{-1})\ud^\times x
:=\lim_{\ell\to \infty}
\int_{F^\times}
k_{\pi,\psi,\ell}(x)\chi_s(x^{-1})\ud^\times x
\end{equation}
converges for $\Re(s)<s_\pi$, admits a meromorphic continuation to $s\in\BC$, and is equal to the local gamma function $\gam(s+\frac{1}{2},\pi\times \chi,\psi)$ associated to $(\pi,\chi)$. Moreover,
as a distribution on $F^\times$, $k_{\pi,\psi}(x)$ is independent of the choice of the matrix coefficient $\vphi_{\wt{\pi}}$ with $\vphi_{\wt{\pi}}(\RI_n) = 1$ and the chosen sequence
$\{\Fc_\ell\}_{\ell=1}^\infty$ that tends to the delta mass supported at $\RI_n$.
\end{thm}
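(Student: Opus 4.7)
The plan is to reduce the assertion to the Godement--Jacquet functional equation applied at each cutoff $\Fc_\ell$, then pass to the limit $\ell\to\infty$ in an auxiliary zeta integral, and finally appeal to Mellin inversion to obtain the independence of choices. For each $\ell\geq 1$ and $\Re(s)<s_\pi$, the chain of computations in the proof of Proposition \ref{prp:MT-k-ell} (cf.\ \eqref{2-1}--\eqref{2-3}) gives the absolutely convergent identity
\[
\int_{F^\times} k_{\pi,\psi,\ell}(x)\chi_s(x^{-1})\ud^\times x
=\CZ\big(1-(s+\tfrac{1}{2}),\CF_{\GJ}(\Fc_\ell),\vphi_\pi^\vee,\chi^{-1}\big).
\]
Since $\Fc_\ell\in\CC_c^\infty(\RG_n(F))\subset\CS_{\std}(\RG_n(F))$, the Godement--Jacquet functional equation \eqref{GJFE} rewrites this, as meromorphic functions in $s$, as
\[
\gam(s+\tfrac{1}{2},\pi\times\chi,\psi)\cdot \CZ(s+\tfrac{1}{2},\Fc_\ell,\vphi_\pi,\chi).
\]

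The next step is to take $\ell\to\infty$ in the auxiliary factor. Because $\Fc_\ell$ is supported in a fixed compact neighborhood $\FN$ of $\RI_n$, the integrand $g\mapsto\Fc_\ell(g)\vphi_\pi(g)\chi(\det g)|\det g|_F^s$ is compactly supported and depends holomorphically on $s$, so $\CZ(s+\tfrac{1}{2},\Fc_\ell,\vphi_\pi,\chi)$ is entire. Using the approximate-identity property \eqref{Fc-I} together with the normalization $\vphi_\pi(\RI_n)=\vphi_{\wt\pi}(\RI_n)=1$, one has
\[
\lim_{\ell\to\infty}\CZ(s+\tfrac{1}{2},\Fc_\ell,\vphi_\pi,\chi)=1,
\]
uniformly for $s$ in compact subsets of $\BC$. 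Combining with the previous step yields
\[
\lim_{\ell\to\infty}\int_{F^\times}k_{\pi,\psi,\ell}(x)\chi_s(x^{-1})\ud^\times x=\gam(s+\tfrac{1}{2},\pi\times\chi,\psi),
\]
first for $\Re(s)<s_\pi$ by absolute convergence, and then on all of $\BC$ via the meromorphic continuation of the Langlands $\gam$-factor.

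For the intrinsic nature of the distribution $k_{\pi,\psi}$, I would invoke Mellin inversion on $F^\times$: a smooth function of moderate growth on $F^\times$ is determined, as a distribution, by its pairings against the quasi-characters $\chi_s$ for all unitary $\chi$ and $s$ varying in a suitable half-plane (together with meromorphic continuation). Since the principal value Mellin transform computed above depends only on $(\pi,\psi)$, the distribution $k_{\pi,\psi}$ is independent of the choice of $\vphi_{\wt\pi}$ with $\vphi_{\wt\pi}(\RI_n)=1$ and of the delta-approximating sequence $\{\Fc_\ell\}$, in direct analogy with Corollary \ref{kernel-wd} in the non-Archimedean case.

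The main obstacle lies in the uniform passage to the limit: one must ensure that the convergence $\CZ(s+\tfrac{1}{2},\Fc_\ell,\vphi_\pi,\chi)\to 1$ is uniform on compact subsets of $\BC$, after meromorphic continuation. I would establish this by dominated convergence on the compact set $\FN$, using the boundedness of $|\vphi_\pi|$ and of $|\det g|_F^{\Re(s)}$ on $\FN$ for $s$ in a compact subset of $\BC$, together with the smoothness of $g\mapsto\chi(\det g)$ on $\FN$. A secondary subtlety in the Mellin inversion step is the need for growth control of the $\gam$-factor along vertical lines, but this is standard from its expression in classical $\Gamma$-functions, and so is not an essential difficulty.
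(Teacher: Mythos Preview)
Your argument for the first assertion---that the principal value integral equals $\gam(s+\tfrac12,\pi\times\chi,\psi)$---is essentially the paper's own: both apply the Godement--Jacquet functional equation to each $\Fc_\ell$ to obtain \eqref{2-5}, then let $\CZ(s+\tfrac12,\Fc_\ell,\vphi_\pi,\chi)\to 1$.

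The independence argument, however, has a genuine gap. You write that ``the principal value Mellin transform computed above depends only on $(\pi,\psi)$'' and conclude via Mellin inversion that the distribution $k_{\pi,\psi}$ is intrinsic. The problem is that the principal value integral \eqref{eq:archi:regularizedlimit} is \emph{defined} as $\lim_\ell\int_{F^\times}k_{\pi,\psi,\ell}(x)\chi_s(x^{-1})\ud^\times x$, i.e.\ through the approximating sequence, not as an intrinsic pairing of the limit function $k_{\pi,\psi}$ against $\chi_s$. Two different choices of $\vphi_{\wt\pi}$ or $\{\Fc_\ell\}$ give two different sequences $k^{(1)}_{\pi,\psi,\ell}$ and $k^{(2)}_{\pi,\psi,\ell}$; you have shown that both regularized limits equal $\gam$, but this says nothing a priori about whether the pointwise limits $k^{(1)}_{\pi,\psi}$ and $k^{(2)}_{\pi,\psi}$ coincide as distributions. (Nor is $k_{\pi,\psi}$ known to be ``of moderate growth'' on $F^\times$, so your stated Mellin-inversion principle does not obviously apply.)

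The paper closes this gap by working the other way around: it pairs $k_{\pi,\psi}$ against an arbitrary $f\in\CC_c^\infty(F^\times)$ and Mellin-inverts $f$, not $k_{\pi,\psi}$. Concretely, since $k_{\pi,\psi,\ell}\to k_{\pi,\psi}$ uniformly on compacta (Proposition~\ref{prp:k-ar}), one has $(k_{\pi,\psi},f)=\lim_\ell\int_{F^\times}k_{\pi,\psi,\ell}(y)f(y)\ud^\times y$; substituting the Mellin inversion formula \eqref{3-2} for $f$ and interchanging with the $\ell$-limit and the $y$-integral reduces this to the expression \eqref{3-6} involving only $\gam(s+\tfrac12,\pi\times\omega,\psi)$ and $\CM(f)$, which is visibly independent of all choices. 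Your sketch can be repaired along exactly these lines, but as written the logical direction of the Mellin-inversion step is inverted.
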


\begin{proof}
We take the real number $s_\pi$ as in Proposition \ref{prp:MT-k-ell}. By \eqref{2-2} and \eqref{2-3}, whenever $\Re(s)<s_\pi$, as a meromorphic function in $s\in \BC$, we have
\begin{align}\label{2-5}
\int_{F^\times}
k_{\pi,\psi,\ell}(x)
\chi_s(x)^{-1}\ud^\times x
=\gam(s+\frac{1}{2},\pi\times \chi,\psi_F)
\cdot\CZ(s+\frac{1}{2},\Fc_\ell,\vphi_\pi,\chi).
\end{align}
Again, as $\Fc_\ell\in \CC^\infty_c(\RG_n(F))$, the integral on the right-hand side of \eqref{2-5}
converges absolutely for all $s\in \BC$. By the assumption that  the support of
$\Fc_\ell$ is contained in a given compact neighborhood $\FN$ of $\RI_n$ for all $\ell$, after taking the limit as $\ell\to \infty$, we obtain the identity
$$
\lim_{\ell\to \infty}
\int_{F^\times}
k_{\pi,\psi,\ell}(x)\chi_s(x)^{-1}
\ud^\times x = \gam(s+\frac{1}{2},\pi\times \chi,\psi)
$$
as $\vphi_\pi(\RI_n)=\vphi_{\wt{\pi}}(\RI_n)=1$. It is clear that this identity holds as meromorphic functions over $s\in\BC$.

As a distribution on $F^\times$, for $f\in \CC^\infty_c(F^\times)$, we consider the convolution of the kernel function $k_{\pi,\psi}(x)$ with $f^\vee$:
\begin{align}\label{3-1}
(k_{\pi,\psi}*f^\vee)(x)
=
\int_{F^\times}
k_{\pi,\psi}(y)f(x^{-1}y)\ud^\times y
=
\lim_{\ell\to \infty}
\int_{F^\times}
k_{\pi,\psi,\ell}(y)f(x^{-1}y)\ud^\times y.
\end{align}
As a distribution, the pairing of $k_{\pi,\psi}(x)$ and $f(x)$ is given by
\[
(k_{\pi,\psi},f)=(k_{\pi,\psi}*f^\vee)(1).
\]
Write $f^x(y):=f(x^{-1}y)$ in the rest of the proof. By \cite[Theorem 2.3]{JL21}, for any $s_0<s_\pi$ (where $s_\pi$ is as in Proposition \ref{prp:MT-k-ell}), we have the following Mellin inversion formula:
\begin{align}\label{3-2}
f^x(y) =
\sum_{\ome\in\Ome^\wedge}
\frac{1}{2\pi i}
\int_{\Re(s) = s_0}
\CM(f^x)(|\cdot|^s\ome(\cdot))
|y|^{-s}
\ome(y)^{-1}
\ud s
\end{align}
where $\Ome^\wedge$ is the Pontryagin dual of $\Ome$, where if $F=\BR$, $\Ome=\{\pm 1\}$, and if $F=\BC$, $\Ome = \BC^\times_1$ which is the set of norm $1$ elements in $\BC$. In particular $\ome(y)=\ac(y)^p$ with $p\in\BZ/2\BZ$ if $F=\BR$, and $p\in\BZ$ if $F=\BC$. Note that the right-hand side of \eqref{3-2} is absolutely convergent for any $s_0<s_\pi$.
The right-hand side of \eqref{3-1} can be written as
\begin{align}\label{3-3}
\lim_{\ell\to \infty}
\int_{F^\times}
k_{\pi,\psi,\ell}(y)
\sum_{\ome}
\frac{1}{2\pi i}
\int_{\Re(s) = s_0}
\CM(f^x)(|\cdot|^s \ome(\cdot))
|y|^{-s}\ome(y)^{-1}
\ud s
\ud^\times y.
\end{align}
Note that the integral is absolutely convergent for any $s_0<s_\pi$. By exchanging the order of integrations, we obtain that the expression in \eqref{3-3} is equal to
\begin{align}\label{3-4}
\lim_{\ell\to \infty}
\sum_{\ome}
\frac{1}{2\pi i}
\int_{\Re(s) = s_0}
\CM(f^x)(|\cdot|^s \ome(\cdot))
\int_{F^\times}
k_{\pi,\psi,\ell}(y)
|y|^{-s}\ome(y)^{-1}
\ud^\times y
\ud s.
\end{align}
When $\Re(s) = s_0<s_\pi$, we apply the formula in \eqref{2-5} to the inner integration of \eqref{3-4} and obtain
\begin{align}\label{3-5}
\int_{F^\times}
k_{\pi,\psi,\ell}(y)
|y|^{-s}\ome(y)^{-1}
\ud^\times y
=
\gam(s+\frac{1}{2},\pi\times \ome,\psi)
\CZ(s+\frac{1}{2},\Fc_\ell,\vphi_{\pi[\ome]})
\end{align}
Since the sequence $\{\Fc_\ell\}$ tends to the delta mass supported at $\RI_n$ as $\ell\to\infty$, and the support of $\Fc_\ell$ is contained in a given compact neighborhood $\FN$ of $\RI_n$
for all $\ell\geq 1$, we deduce that
$$
\CZ(s+\frac{1}{2},\Fc_\ell,\vphi_{\pi[\ome]})
=
\int_{\RG_n(F)}
\Fc_\ell(g)
\vphi_{\pi[\ome]}(g)
|\det g|^{s-\frac{1}{2}}_F\ud g
$$
is uniformly bounded (independent of $\ell$ and $\Re(s) = s_0$). By exchanging the order of the limit with integrations, we obtain that
\[
\lim_{\ell\to \infty}\int_{\RG_n(F)}
\Fc_\ell(g)
\vphi_{\pi[\ome]}(g)
|\det g|^{s-\frac{1}{2}}_F\ud g=1
\]
as $\vphi_\pi(\RI_n)=\vphi_{\wt{\pi}}(\RI_n)=1$, and
the expression in \eqref{3-4} is equal to
\begin{align}\label{3-6}
\sum_{\ome}
\frac{1}{2\pi i}
\int_{\Re(s) = s_0}
\CM(f^x)(|\cdot|^s \ome(\cdot))
\gam(s+\frac{1}{2},\pi\times \ome,\psi)
\ud s.
\end{align}
It is clear that the expression in \eqref{3-6} is independent of the choice of matrix coefficients
$\vphi_{\wt{\pi}}\in\CC(\wt{\pi})$ with $\vphi_{\wt{\pi}}(\RI_n)=1$, and the choice of such sequences $\{\Fc_\ell\}_{\ell=1}^\infty$.
Therefore, the integral in \eqref{3-1} enjoys the same property, which implies that as a distribution on $F^\times$, the kernel function $k_{\pi,\psi}(x)$ depends only on $\pi$ and
$\psi=\psi_F$. This finishes the theorem.
\end{proof}

\section{Fourier Operator and Hankel Transform}\label{sec-FOHT}

In this section we get back to take $F$ to be any local field of characteristic zero. Recall from \eqref{kernel-na}, \eqref{kernel-pv}, \eqref{kernel-ar}, and \eqref{k-reg-1},
that for any $\pi\in\Pi_F(n)$
and for a given additive character $\psi=\psi_F$,
the kernel function $k_{\pi,\psi}(x)$ is given by the following formula:
\begin{equation}\label{kernel-local}
k_{\pi,\psi}(x)
=
\int^\reg_{\det g=x}\Phi_{\GJ}(g)
\vphi_{\wt{\pi}}(g)\ud_x g
=
\lim_{\ell\to \infty}
\int_{\det g=x}
\left(\Phi_{\GJ}*\Fc_{\ell}^\vee
\right)
(g)\vphi_{\wt{\pi}}(g)\ud_xg
\end{equation}
where $\vphi_{\wt{\pi}}\in\CC(\wt{\pi})$ with $\vphi_{\wt{\pi}}(\RI_n)=1$, $\{\Fc_\ell\}_{\ell\geq 1}$ is a sequence of test functions in $\CC^\infty_c(\RG_n(F))$ that
tends to the delta mass supported at $\RI_n$ as $\ell\to\infty$ and the support of $\Fc_\ell$ is contained in a given compact neighborhood $\FN$ of $\RI_n$ for all $\ell\geq 1$ (see \eqref{eq:ck} for
the non-Archimedean case and \eqref{Fc-I} for the Archimedean case). By \cite[Corollary 3.8]{JL21},
we have
\[
\CC_c^\infty(F^\times)\subset \CS_\pi(F^\times)\subset\CC^\infty(F^\times)
\]
for any $\pi\in\Pi_F(n)$. It is proved in Corollary \ref{kernel-wd} and Theorem \ref{thm:KF-ar} that as a distribution on $\CC_c^\infty(F^\times)$,
the kernel function $k_{\pi,\psi}(x)$ is independent of the choice of $\vphi_{\wt{\pi}}\in\CC(\wt{\pi})$  and $\{\Fc_\ell\}_{\ell\geq 1}\subset\CC^\infty_c(\RG_n(F))$.

\subsection{Hankel transform}\label{ssec-HT}
In this section, we are going to show that the Fourier operator $\CF_{\pi,\psi}$ as defined in \eqref{eq:1-FO} can be represented as a Hankel transform (convolution operator) with the kernel function $k_{\pi,\psi}(x)$ over the space of test functions $\CC_c^\infty(F^\times)$.

\begin{thm}[Hankel Transform]\label{thm:HT}
Let $F$ be any local field of characteristic zero.
For any $\pi\in\Pi_F(n)$, the Fourier operator $\CF_{\pi,\psi}$ defined in \eqref{eq:1-FO} can be represented as a convolution operator by the kernel function $k_{\pi,\psi}(x)$
\[
\CF_{\pi,\psi}(\phi_0)(x)=(k_{\pi,\psi}*\phi_0^\vee)(x)
\]
for any $\phi_0\in\CC_c^\infty(F^\times)$.
\end{thm}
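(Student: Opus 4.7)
The plan is to compare $\CF_{\pi,\psi}(\phi_0)$ and $k_{\pi,\psi}*\phi_0^\vee$ through their Mellin transforms (the local zeta integrals against every quasi-character $\chi_s$ of $F^\times$) and then conclude by Mellin inversion. Since $\phi_0\in\CC_c^\infty(F^\times)\subset\CS_\pi(F^\times)$ by \eqref{CSC}, both sides are well-defined smooth functions on $F^\times$: the left-hand side lies in $\CS_{\wt\pi}(F^\times)$ by the definition \eqref{eq:1-FO}, while the right-hand side $(k_{\pi,\psi}*\phi_0^\vee)(x)=\int_{F^\times}k_{\pi,\psi}(y)\phi_0(x^{-1}y)\,d^\times y$ is given pointwise by an absolutely convergent integral thanks to the compact support of $\phi_0$ in $F^\times$ and the smoothness of the kernel $k_{\pi,\psi}$ (Proposition \ref{k-smooth} in the non-Archimedean case, Corollary \ref{cor:kernel} in the Archimedean case).

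For the left-hand side, applying the local functional equation \eqref{GL1-FE} after the substitution $s\leftrightarrow 1-s$, $\chi\leftrightarrow\chi^{-1}$ yields, as meromorphic functions of $s$,
\[
\CZ(s,\CF_{\pi,\psi}(\phi_0),\chi)=\gam(1-s,\pi\times\chi^{-1},\psi)\cdot\CZ(1-s,\phi_0,\chi^{-1}).
\]
For the right-hand side, I would change variable $y=xz$ in the double integral defining $\CZ(s,k_{\pi,\psi}*\phi_0^\vee,\chi)$; this formally separates the variables and produces
\[
\CZ(s,k_{\pi,\psi}*\phi_0^\vee,\chi)=\Bigl(\int_{F^\times}k_{\pi,\psi}(y)\chi(y)|y|_F^{s-\frac{1}{2}}\,d^\times y\Bigr)\cdot\CZ(1-s,\phi_0,\chi^{-1}).
\]
Writing $\eta=\chi^{-1}$ so that $\chi(y)|y|_F^{s-\frac{1}{2}}=\eta_{\frac{1}{2}-s}(y^{-1})$, the first factor is exactly the principal value Mellin integral computed in Theorems \ref{thm:KF-na} and \ref{thm:KF-ar}, and equals $\gam(1-s,\pi\times\chi^{-1},\psi)$. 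Hence the two zeta integrals agree as meromorphic functions of $s$, and Mellin inversion (cf.\ \cite[Theorem 2.3]{JL21}) forces equality of the two underlying smooth functions on $F^\times$.

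The main obstacle is rigorously justifying the separation of variables in the double integral above, because the Mellin transform of $k_{\pi,\psi}$ exists only as a regularized/principal value limit and Fubini does not apply directly. I would handle this by working first with the $\ell$-th approximant $k_{\pi,\psi,\ell}$ from \eqref{kernel-pv} and \eqref{k-reg-1}, for which the relevant integrals are absolutely convergent (Lemma \ref{lem:k-ell} and Proposition \ref{prp:k-ell-ar}) and Fubini is legitimate, and then passing to the limit $\ell\to\infty$. The interchange of limit with integration against $\phi_0^\vee$ on the spatial side, and with $\chi_s$ on the Mellin side, is justified by the uniform convergence of $k_{\pi,\psi,\ell}\to k_{\pi,\psi}$ on compact subsets of $F^\times$ (Lemma \ref{lem:stable-na} in the non-Archimedean case, Proposition \ref{prp:k-ar} in the Archimedean case) together with the compact support of $\phi_0$ in $F^\times$.
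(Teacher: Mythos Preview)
Your overall strategy—compare the two sides through their Mellin transforms, invoke Theorems \ref{thm:KF-na}/\ref{thm:KF-ar} for the kernel and the functional equation \eqref{GL1-FE} for $\CF_{\pi,\psi}(\phi_0)$, then appeal to Mellin inversion—is exactly the mechanism the paper uses. The paper simply runs it in the dual direction: instead of taking the Mellin transform of $(k_{\pi,\psi}*\phi_0^\vee)(x)$ in $x$, it inserts the Mellin inversion formula for $\phi_0^x(y)$ into the $y$-integral and manipulates from there.

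There is, however, a genuine soft spot in your justification of the limit on the Mellin side. You write that the interchange of $\lim_{\ell\to\infty}$ with the integral against $\chi_s$ is ``justified by the uniform convergence of $k_{\pi,\psi,\ell}\to k_{\pi,\psi}$ on compact subsets of $F^\times$ together with the compact support of $\phi_0$.'' That reasoning is fine for the \emph{spatial} limit $\int k_{\pi,\psi,\ell}(y)\phi_0(x^{-1}y)\,d^\times y\to\int k_{\pi,\psi}(y)\phi_0(x^{-1}y)\,d^\times y$, because for fixed $x$ the $y$-integrand is compactly supported. It does \emph{not} justify
\[
\lim_{\ell\to\infty}\int_{F^\times}(k_{\pi,\psi,\ell}*\phi_0^\vee)(x)\,\chi(x)|x|_F^{s-\frac12}\,d^\times x
\;=\;
\int_{F^\times}(k_{\pi,\psi}*\phi_0^\vee)(x)\,\chi(x)|x|_F^{s-\frac12}\,d^\times x,
\]
since $(k_{\pi,\psi,\ell}*\phi_0^\vee)(x)$ is \emph{not} compactly supported in $x$, and locally uniform convergence of the integrand gives no control near $0$ and $\infty$. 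Without this step you only know that $k_{\pi,\psi,\ell}*\phi_0^\vee\to k_{\pi,\psi}*\phi_0^\vee$ pointwise and that $\CZ(s,k_{\pi,\psi,\ell}*\phi_0^\vee,\chi)\to\CZ(s,\CF_{\pi,\psi}(\phi_0),\chi)$; these two facts together do not force the pointwise limit to equal $\CF_{\pi,\psi}(\phi_0)$.

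The paper's ordering sidesteps this: it writes $(k_{\pi,\psi}*\phi_0^\vee)(x)=\lim_\ell\int k_{\pi,\psi,\ell}(y)\phi_0^x(y)\,d^\times y$ (valid by your spatial-side argument), replaces $\phi_0^x(y)$ by its Mellin inversion, and for each $\ell$ swaps the $y$- and $s$-integrals using Proposition \ref{prp:MT-k-ell}. The limit $\ell\to\infty$ is then taken \emph{inside the $s$-integral}, where dominated convergence applies because $\CZ(s+\tfrac12,\Fc_\ell,\vphi_{\pi[\omega]})$ is uniformly bounded in $\ell$ and along $\Re(s)=s_0$ (the supports of $\Fc_\ell$ sit in a fixed compact neighborhood of $\RI_n$). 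One then recognizes the resulting $s$-integral as the Mellin inversion of $\CF_{\pi,\psi}(\phi_0)$. If you reorganize your argument this way—or, equivalently, supply a uniform-in-$\ell$ majorant for $(k_{\pi,\psi,\ell}*\phi_0^\vee)(x)\chi_s(x)$ over all of $F^\times$—the gap closes and your proof becomes essentially identical to the paper's.
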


\begin{proof}
As in \eqref{3-1}, we write
\begin{align}\label{5-1}
(k_{\pi,\psi}*\phi_0^\vee)(x)
=
\int_{F^\times}
k_{\pi,\psi}(y)\phi_0(x^{-1}y)\ud^\times y
=
\int_{F^\times}
k_{\pi,\psi}(y)\phi_0^x(y)\ud^\times y,
\end{align}
with $\phi_0^x(y)=\phi_0(x^{-1}y)$ for any $\phi_0\in\CC_c^\infty(F^\times)$. It is clear that the integral converges absolutely since the kernel function $k_{\pi,\psi}(y)$ is a smooth function on $F^\times$
(Proposition \ref{k-smooth} for the non-Archimedean case and Corollary \ref{cor:kernel} for the Archimedean case).

We first consider the case that $F$ is Archimedean. By using the Archimedean Mellin inversion formula for $\phi_0^x(y)$ as in \eqref{3-2}, we write the right-hand side of \eqref{5-1} as
\begin{align}\label{5-2}
\int_{F^\times}
k_{\pi,\psi}(y)
\sum_{\ome\in\Ome^\wedge}
\frac{1}{2\pi i}
\int_{\Re(s) = s_0}
\CM(\phi_0^x)(|\cdot|_F^s\ome(\cdot))
|y|_F^{-s}
\ome(y)^{-1}
\ud s\ud^\times y,
\end{align}
which is absolutely convergent for any $s_0<s_\pi$ as in \eqref{3-3}. By the same arguments used from \eqref{3-2} to \eqref{3-6}, we obtain that
\begin{align}\label{5-3}
(k_{\pi,\psi}*\phi_0^\vee)(x)
=
\sum_{\ome\in\Ome^\wedge}
\frac{1}{2\pi i}
\int_{\Re(s) = s_0}
\CM(\phi_0^x)(|\cdot|_F^s\ome(\cdot))
\gam(s+\frac{1}{2},\pi\times \ome,\psi)\ud s
\end{align}
with $s_0<s_\pi$. From the definition of the Mellin transform, we obtain that
\begin{align*}
\CM(\phi_0^x)(|\cdot|_F^s\ome(\cdot))
=
|x|^{s}_F
\ome(x)
\CM(\phi_0)(|\cdot|^{s}_F\ome(\cdot)).
\end{align*}
Hence we obtain that the right-hand side of \eqref{5-3} is equal to
\begin{align}\label{5-4}
\sum_{\ome\in\Ome^\wedge}
\frac{1}{2\pi i}
\int_{\Re(s) = s_0}
\CM(\phi_0)(|\cdot|_F^s\ome(\cdot))
\gam(s+\frac{1}{2},\pi\times \ome,\psi)
|x|^{s}_F\ome(x)\ud s.
\end{align}
By definition, we have
\[
\CM(\phi_0)(|\cdot|_F^s\ome(\cdot))
=
\CZ(s+\frac{1}{2},\phi_0,\ome).
\]
Then we obtain
\begin{align*}
\CM(\phi_0)(|\cdot|_F^s\ome(\cdot))
\gam(s+\frac{1}{2},\pi\times \ome,\psi)
&=
\gam(s+\frac{1}{2},\pi\times \ome,\psi)
\CZ(s+\frac{1}{2},\phi_0,\ome)\\
&=
\CZ(\frac{1}{2}-s,\CF_{\pi,\psi}(\phi_0),\ome^{-1}).
\end{align*}
The last identity uses the functional equation in \eqref{GL1-FE}, which comes from \cite[Theorem 3.10]{JL21}. Similarly, we have
\[
\CZ(\frac{1}{2}-s,\CF_{\pi,\psi}(\phi_0),\ome^{-1})=
\CM(\CF_{\pi,\psi}(\phi_0))(|\cdot|^{-s}\ome^{-1}).
\]
When $s_0<s_\pi$, \eqref{5-4} is equal to
\[
\sum_{\ome\in\Ome^\wedge}
\frac{1}{2\pi i}
\int_{\Re(s) = s_0}
\CM(\CF_{\pi,\psi}(\phi_0))(|\cdot|^{-s}\ome^{-1})
|x|^{s}_F\ome(x)\ud s=\CF_{\pi,\psi}(\phi_0)(x),
\]
according to Mellin inversion formula in \cite[Theorem 2.3]{JL21}. Therefore, we obtain
\[
\CF_{\pi,\psi}(\phi_0)(x)
=
(k_{\pi,\psi}*\phi_0^\vee)(x)
\]
as functions on $F^\times$. This proves the theorem for the Archimedean case.

For the non-Archimedean case, the only difference is the expression of the Mellin inversion formula for
$\phi_0^x$. In the Archimedean case, it was given in \eqref{3-2}, while the non-Archimedean case is given
by
\begin{align}
\phi_0^x(y)=
\sum_{\ome\in\Ome^\wedge}\bigg(\Res_{s=0}(\CM(\phi_0^x)(|\cdot|_F^s\ome(\cdot))|x|_F^{-s}q^s)\bigg)\ome(\ac(x))^{-1}.
\end{align}
Note that for a given function $\phi_0^x(y)$, the summation over $\ome\in\Ome^\wedge$ only takes finitely many characters $\ome$. It is clear now that the proof for the Archimedean case
works in the same way for the non-Archimedean case. We omit the details here.
\end{proof}

\subsection{Homogeneous distributions on $\GL_1$ and local $\gam$-functions}\label{ssec-HDGF}

Let $\chi_s(x)=\chi(x)\cdot|x|_F^s$ be a quasi-character of $F^\times$ with $\chi$ a unitary
character of $F^\times$ and $s\in\BC$. One may regard $\chi_s$ as a distribution (generalized function)
on $F^\times$ by defining
\begin{align}\label{chis-dist}
(\chi_s,\phi_0):=\int_{F^\times}\phi_0(x)\chi_s(x)\ud^\times x
\end{align}
for any $\phi_0\in \CC_c^\infty(F^\times)$. As usual, one defines the Fourier transform of the
distribution $\chi_s$ by
\begin{align}\label{FT-chis}
(\CF_{\pi,\psi}(\chi_s),\phi_0):=(\chi_s,\CF_{\pi,\psi}(\phi_0))
\end{align}
for any $\phi_0\in \CC_c^\infty(F^\times)$. We may compute the distribution
$\CF_{\pi,\psi}(\chi_s^{-1})$ as follows.
\begin{align*}
(\CF_{\pi,\psi}(\chi_s^{-1}),\phi_0)
&=(\chi_s^{-1},\CF_{\pi,\psi}(\phi_0))\\
&=\int_{F^\times}\CF_{\pi,\psi}(\phi_0)(x)\chi_s^{-1}(x)\ud^\times x\\
&=\CZ(\frac{1}{2}-s,\CF_{\pi,\psi}(\phi_0),\chi^{-1}).
\end{align*}
As in the proof of Theorem \ref{thm:HT}, by using the functional equation in \eqref{GL1-FE},
which comes from \cite[Theorem 3.10]{JL21}, we obtain that
\begin{align*}
\CZ(\frac{1}{2}-s,\CF_{\pi,\psi}(\phi_0),\chi^{-1})
&=
\CZ(s+\frac{1}{2},\phi_0,\chi)\cdot \gam(\frac{1}{2},\pi\times\chi_s,\psi)\\
&= (\chi_s,\phi_0)\cdot\gam(\frac{1}{2},\pi\times\chi_s,\psi).
\end{align*}
Hence we obtain that
\[
(\CF_{\pi,\psi}(\chi_s^{-1}),\phi_0)=(\chi_s,\phi_0)\cdot\gam(\frac{1}{2},\pi\times\chi_s,\psi)
\]
for all $\phi_0\in \CC_c^\infty(F^\times)$, which holds for all $s\in\BC$ by meromorphic continuation. Therefore, we deduce that
as distributions on $F^\times$, the following functional equation
\begin{align}\label{FE-chis}
\CF_{\pi,\psi}(\chi_s^{-1})=\gam(\frac{1}{2},\pi\times\chi_s,\psi)\cdot\chi_s
\end{align}
holds for $s\in\BC$ after meromorphic continuation.

\begin{thm}\label{GGPS-gam}
For any $\pi\in\Pi_F(n)$, the local Langlands $\gam$-functions $\gam(s,\pi\times\chi,\psi)$ with any
unitary characters $\chi$ of $F^\times$ are the gamma functions in the sense of Gelfand, Graev and
Piatetski-Shapiro in \cite{GGPS} and of Weil in \cite{W66}, i.e.
\[
\CF_{\pi,\psi}(\chi_s^{-1})=\gam(\frac{1}{2},\pi\times\chi_s,\psi)\cdot\chi_s
\]
holds for $s\in\BC$ after meromorphic continuation. Moreover, the Fourier transform
$\CF_{\pi,\psi}(\chi_s)$ of the homogeneous distribution $\chi_s$ on $F^\times$ can be expressed
as a convolution integral with $k_{\pi,\psi}(x)$ as the kernel function:
\[
\CF_{\pi,\psi}(\chi_s)(x)=(k_{\pi,\psi}*\chi_s^\vee)(x),
\]
which holds for $s\in\BC$ after meromorphic continuation.
\end{thm}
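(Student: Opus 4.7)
The first identity is essentially contained in the derivation immediately preceding the statement of the theorem: for any test function $\phi_0\in\CC_c^\infty(F^\times)$, expanding the definition of the distributional Fourier transform in \eqref{FT-chis}, applying the functional equation \eqref{GL1-FE} from \cite[Theorem 3.10]{JL21}, and using the relation $\chi_s^\vee=\chi_s^{-1}$, one recovers the identity \eqref{FE-chis}. So I would open the proof by recalling this derivation, and noting that the identity holds first for $\Re(s)$ in a suitable strip where both sides define honest tempered-like distributions, and then for all $s\in\BC$ by meromorphic continuation, using the known meromorphy of $\gam(\frac{1}{2},\pi\times\chi_s,\psi)$.

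For the convolution formula, the plan is to work formally first and then justify the manipulations by principal value regularization. Writing out the multiplicative convolution,
\[
(k_{\pi,\psi}*\chi_s^\vee)(x)
=\int_{F^\times} k_{\pi,\psi}(y)\,\chi_s^\vee(y^{-1}x)\,\ud^\times y
=\chi_s^{-1}(x)\cdot\int_{F^\times} k_{\pi,\psi}(y)\,\chi_s(y)\,\ud^\times y,
\]
where the factorization uses $\chi_s^\vee(y^{-1}x)=\chi_s(x^{-1}y)=\chi_s^{-1}(x)\chi_s(y)$. By Theorem \ref{thm:KF-na} in the non-Archimedean case, and Theorem \ref{thm:KF-ar} in the Archimedean case, with $\chi_s$ in place of $\chi_s^{-1}$, the remaining integral, interpreted as a principal value, converges for $\Re(s)$ in an appropriate range and equals $\gam(\frac{1}{2},\pi\times\chi_s^{-1},\psi)$. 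Combined with the consequence $\CF_{\pi,\psi}(\chi_s)=\gam(\frac{1}{2},\pi\times\chi_s^{-1},\psi)\cdot\chi_s^{-1}$ of the first part (obtained by replacing $\chi_s$ with $\chi_s^{-1}$ in \eqref{FE-chis}), this yields
\[
(k_{\pi,\psi}*\chi_s^\vee)(x)=\gam(\tfrac{1}{2},\pi\times\chi_s^{-1},\psi)\cdot\chi_s^{-1}(x)=\CF_{\pi,\psi}(\chi_s)(x),
\]
as desired, and the identity then extends to all $s\in\BC$ by meromorphic continuation of both sides.

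To make this rigorous at the distributional level, I would pair both sides with an arbitrary $\phi_0\in\CC_c^\infty(F^\times)$ and show
\[
(k_{\pi,\psi}*\chi_s^\vee,\phi_0)=(\chi_s,k_{\pi,\psi})\cdot(\chi_s^{-1},\phi_0),
\]
via a Fubini argument valid in the half-plane $\Re(s)<s_\pi$ in the Archimedean case (with $s_\pi$ as in Proposition \ref{prp:MT-k-ell}), and in a suitable left half-plane in the non-Archimedean case; the inner integration against $\chi_s$ is to be understood as the principal value appearing in \eqref{hankel-na} and \eqref{eq:archi:regularizedlimit}, while the outer integral against $\phi_0$ is compactly supported and causes no difficulty. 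The factor $(\chi_s,k_{\pi,\psi})$ is then identified with $\gam(\frac{1}{2},\pi\times\chi_s^{-1},\psi)$ by the cited theorems, and $(\chi_s^{-1},\phi_0)=\CZ(\frac{1}{2}-s,\phi_0,\chi^{-1})$ is a standard Mellin zeta integral. Comparing with the pairing $(\CF_{\pi,\psi}(\chi_s),\phi_0)$ computed via \eqref{FT-chis} and \eqref{GL1-FE} yields the claimed identity as distributions.

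The main obstacle, as I expect, is the justification of the interchange of the principal value integration defining $(\chi_s,k_{\pi,\psi})$ with the compactly supported integration against $\phi_0$: neither $k_{\pi,\psi}$ nor $\chi_s$ is integrable on $F^\times$, so the convolution $k_{\pi,\psi}*\chi_s^\vee$ must be defined through a regularization. In the non-Archimedean case one proceeds by summing first over the annuli $\RS_m$ of fixed valuation as in \eqref{hankel-na}, so that only finitely many $m$ contribute once the support of $\phi_0$ is used, and then taking the limit; the absolute convergence on each $\RS_m$ follows from Proposition \ref{k-smooth}. In the Archimedean case one truncates to $|x|_F\in[A^{-1},A]$ and uses the smoothness of $k_{\pi,\psi}$ from Corollary \ref{cor:kernel} together with the moderate-growth bounds built into Proposition \ref{prp:MT-k-ell} to pass to the limit. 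Once the pairing is established in the region of absolute convergence, standard meromorphic continuation in $s$ completes the proof.
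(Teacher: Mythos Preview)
Your proposal is correct and follows essentially the same route as the paper: factor the multiplicative convolution $(k_{\pi,\psi}*\chi_s^\vee)(x)$ into $\chi_s^{-1}(x)$ times the principal-value Mellin integral of $k_{\pi,\psi}$, identify that integral with the gamma factor via Theorems~\ref{thm:KF-na} and~\ref{thm:KF-ar}, and match against \eqref{FE-chis}. The paper's proof is in fact terser than yours---it simply writes the formal principal-value computation without the distributional pairing against $\phi_0$ that you sketch in the third paragraph---so your additional care is a bonus, not a departure.

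One small correction: your stated convergence region is off by a sign. The integral $(\chi_s,k_{\pi,\psi})=\int^{\pv}_{F^\times}k_{\pi,\psi}(y)\chi_s(y)\,\ud^\times y$ is the case of Theorems~\ref{thm:KF-na}/\ref{thm:KF-ar} with $\chi_s$ replaced by $\chi_s^{-1}$, so it converges for $\Re(-s)<s_\pi$, i.e.\ on a \emph{right} half-plane $\Re(s)>-s_\pi$, not on $\Re(s)<s_\pi$ as you wrote. This does not affect the argument; just work in the correct half-plane and then continue meromorphically.
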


\begin{proof}
By Theorems \ref{thm:KF-na} and \ref{thm:KF-ar}, we have
\begin{align*}
(k_{\pi,\psi}*\chi_s)(x)
&=
\int^{\pv}_{F^\times}k_{\pi,\psi}(y)\chi_s(y^{-1}x)\ud^\times y=
\chi_s(x)\int^{\pv}_{F^\times}k_{\pi,\psi}(y)\chi_s(y^{-1})\ud^\times y\\
&=\chi_s(x)\cdot(k_{\pi,\psi}*\chi_s)(1)=\chi_s(x)\cdot\gam(\frac{1}{2},\pi\times\chi_s,\psi)
=\CF_{\pi,\psi}(\chi_s^{-1})(x).
\end{align*}
Hence we obtain the desired identity:
\[
\CF_{\pi,\psi}(\chi_s)(x)=(k_{\pi,\psi}*\chi_s^\vee)(x),
\]
which holds for $s\in\BC$ after meromorphic continuation. Note that $\chi_s^{-1}(x)=\chi_s^\vee(x)$.
\end{proof}

\end{document}